\newtheorem{theorem}{Theorem}[section]
\newtheorem{lemma}[theorem]{Lemma}
\newtheorem{definition}[theorem]{Definition}
\newtheorem{example}[theorem]{Example}
\newtheorem{proposition}[theorem]{Proposition}
\newtheorem{remark}[theorem]{Remark}
\newtheorem{corollary}[theorem]{Corollary}
\newtheorem{conjecture}[theorem]{Conjecture}
\title{A Generalization of the Gram determinant of type A}
\author{Rhea Palak Bakshi}
\address{Department of Mathematics, The George Washington University, Washington DC, USA}
\email{rhea\_palak@gwu.edu}
\author{Dionne Ibarra}
\address{Department of Mathematics, The George Washington University, Washington DC, USA}
\email{dfkunkel@gwu.edu}
\author{Sujoy Mukherjee}
\address{Department of Mathematics, The George Washington University, Washington DC, USA}
\email{sujoymukherjee@gwu.edu}
\author{J\'{o}zef H. Przytycki}
\address{Department of Mathematics, The George Washington University, Washington DC, USA and \linebreak Department of Mathematics, University of Gda\'{n}sk, Poland}
\email{przytyck@gwu.edu}
\date{\today}
\keywords{Catalan numbers, Gram determinants, Klein bottle, knot theory, M\"obius band, Temperley - Lieb algebra, Witten - Reshetikhin - Turaev invariants.}
\subjclass[2010]{Primary: 57M27. Secondary: 57M25, 05A19}
\begin{document}

\begin{abstract}
    The Gram determinant of type $A$ was introduced by Lickorish in his work on invariants of 3 - manifolds. We generalize the theory of the Gram determinant of type $A$ by evaluating, in the annulus, a bilinear form of non-intersecting connections in the disc. The main result provides a closed formula for this Gram determinant. We conclude the paper by discussing Chen's conjecture about the Gram determinant of type $Mb$ evaluated in the Klein bottle. 
\end{abstract}

\maketitle
\tableofcontents
\section{Introduction}

Modern work on Gram determinants in knot theory began with the construction of the Witten-Reshetikhin-Turaev invariants of 3-manifolds by Lickorish. \cite{Wit,RT,Li4}. Lickorish's Gram determinant (also called the Gram determinant of type $A$) was computed in the series of papers \cite{KS}, \cite{Wes}, and \cite{DiF}. For our work, Cai's proof of the closed formula of the determinant using Jones - Wenzl idempotents is important \cite{Cai}. 

In fact, in knot theory, matrices whose determinants are related to the Gram determinants, are prevalent. The earliest bilinear forms and their related Gram determinants (or their modifications) in knot theory are as follows:
\begin{enumerate}
    \item the Seifert matrix of a link, $L$, denoted by $S_L$, and its modification which gives the Alexander matrix $tS_L-t^{-1}S_L^{T}$. Recall that the determinant of the Alexander matrix is the Alexander polynomial of the link $L,$ 
    \item  the determinant of the Goeritz matrix of a link which is called the determinant of the link. If the determinant is not zero, then it is the rank of the first homology group of the double branched cover of $\mathbb{S}^3$ branched along the link, and 
    \item the linking matrix of a  framed link. If the determinant is not zero, then it is the rank of the first homology group of the 3 - manifold obtained from surgery along the link. 
\end{enumerate} 

Rodica Simion introduced the Gram determinant of type $B$ and its closed form was computed in \cite{Ch-P} and \cite{M-Sa}.

\subsection{Relative Skein Modules}

Relative skein modules allow the theory of the Gram determinants to be developed for $3$ - manifolds using any skein relation (for example, the HOMFLYPT skein relation). See \cite{Prz1,Prz2}. In our work, we use the relative Kauffman bracket skein module to study Gram determinants. 

\begin{definition}\

Let $M$ be an oriented $3$ - manifold and  $\{x_1,x_2,\ldots,x_{2n}\}$ be a set of $2n$ framed points on $\partial M$. Let $\mathcal{L}_{fr}(2n)$ be the set of all relative framed links in $(M, \partial M)$ considered up to ambient isotopy keeping $\partial M$ fixed, such that $L \cap \partial M = \partial L = \{x_i\}$. Let $R$ be a commutative ring with unity,  $A$ an invertible element in $R$, and $S_{2,\infty}(2n)$, the submodule of $R\mathcal{L}_{fr}(2n)$, generated by all the Kauffman bracket skein relations. Then, the relative Kauffman bracket skein module of $M$ is the quotient: $$\mathcal{S}_{2,\infty}(M, \{x_i\}_1^{2n}; R, A) = \frac{R\mathcal{L}_{fr}(2n)}{ S_{2,\infty}(2n)}.$$ 

\end{definition}

\begin{theorem}\cite{Prz2}

Let $F$ be a surface and $M = F \times I$ (or  $F\ \hat{\times}\ I$, if $F$ is unoriented), such that $\partial F \neq \varnothing$. Then, 
$\mathcal{S}_{2,\infty}(M, \{x_i\}_1^{2n}; R, A)$ is a free $R$-module whose basis consists of relative links in $F$, without trivial components. 

\end{theorem}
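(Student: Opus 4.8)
The plan is to prove the two halves of the statement separately: that the relative links in $F$ without trivial components span $\mathcal{S}_{2,\infty}(M,\{x_i\}_1^{2n};R,A)$ over $R$, and that they are $R$-linearly independent. For spanning, I would start from a relative framed link $L$ in $(M,\partial M)$, fix a generic diagram of it on $F$ in blackboard framing, and apply the Kauffman bracket relation $L = AL_0 + A^{-1}L_\infty$ repeatedly; since each application strictly decreases the number of crossings, after finitely many steps $L \equiv \sum_i \alpha_i L_i \pmod{S_{2,\infty}(2n)}$ with every $L_i$ crossingless. A crossingless diagram on $F$ with $\partial L_i = \{x_i\}$ is a disjoint union of properly embedded arcs together with embedded circles, and applying $L' \sqcup \bigcirc = (-A^2-A^{-2})L'$ removes every circle bounding a disc in $F$; arcs are never removed, even when an arc cuts off a disc, so nothing collapses. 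This exhibits the relative links without trivial components (hereafter the reduced relative links) as a spanning set.

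For independence I would first set up a combinatorial model of the candidate basis. Assuming $F$ connected with $\partial F \neq \varnothing$ (the components of $\partial F$ and the non-orientable case $F\,\hat{\times}\,I$ being handled in the same way, passing to the orientation double cover if convenient), the surface $F$ deformation retracts onto a finite graph, so one can choose disjoint properly embedded arcs $\gamma_1,\dots,\gamma_k$ cutting $F$ into a single disc $\Delta$. To each reduced relative link $D$ associate the vector $w(D)=(w_1(D),\dots,w_k(D))$ of minimal geometric intersection numbers with the $\gamma_j$, an isotopy invariant; together with the crossingless matching $D$ induces inside $\Delta$, this determines $D$ up to isotopy, and every compatible pair is realized. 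So the reduced relative links are indexed by this data.

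Then I would prove that $\sum_D c_D D = 0$ forces every $c_D = 0$ by producing, for each reduced relative link $D_0$, an $R$-linear functional $\Phi_{D_0}$ on $\mathcal{S}_{2,\infty}(M,\{x_i\}_1^{2n};R,A)$ with $\Phi_{D_0}(D) = \delta_{D,D_0}$. On a diagram $E$ one defines $\Phi_{D_0}(E)$ by the state sum: resolve every crossing by the $A^{\pm1}$-smoothings, delete each resulting trivial circle with a factor $-A^2-A^{-2}$, and add the coefficients of the terms that equal $D_0$ as reduced relative links; lexicographic decrease of (number of crossings, number of trivial circles) makes this rewriting terminate, so $\Phi_{D_0}$ is defined on diagrams. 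It descends to the skein module once the vector of coefficients of reduced diagrams is seen to be invariant under the skein relations and the Reidemeister moves: resolutions at distinct crossings commute, $R2$ and $R3$ are the usual Kauffman-bracket calculations performed in a disc inside $F$, and $R1$ is absorbed by the $-A^{\pm3}$ framing correction, consistently since the $x_i$ are framed points. An alternative is to induct on the number of $1$-handles of $F$: the base case $F=D^2$ is the classical independence of the $2n$-point crossingless matchings in the Temperley--Lieb module, and for $F=F'\cup(\text{1-handle})$ a reduced relative link in $F$ is recorded by one in $F'$ plus its intersection number with the co-core, reducing the claim to $F'$ with extra boundary points.

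The main obstacle is exactly this last well-definedness/confluence step --- proving that ``the coefficient of a fixed reduced relative link'' descends to a function on the skein module, equivalently that two distinct reduced relative links in $F$ are never connected by a chain of Kauffman bracket relations. All the genuine surface topology sits here; spanning, termination of the rewriting, and the combinatorial indexing of the reduced relative links are routine in comparison.
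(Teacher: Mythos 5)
The paper does not prove this theorem; it is quoted verbatim from \cite{Prz2}, so the only meaningful comparison is with Przytycki's original argument, and your outline is essentially that argument: span by resolving crossings and deleting nullhomotopic circles, then establish freeness by showing that the coefficient vector over reduced diagrams is a well-defined invariant of the relative framed link, all the content being concentrated in the confluence/Reidemeister-invariance check, which you correctly localize to discs in $F$. You are also right, and commendably explicit, that this last step is where the work is; your sketch of it (commuting resolutions at distinct crossings, the standard local computations for R2 and R3, the cancellation of the removed trivial circle against $-A^{2}-A^{-2}$) is the correct mechanism. One point needs repair as stated: since the module is built from \emph{framed} links with blackboard framing, diagrams on $F$ are taken up to R2, R3, and the cancellation of a \emph{pair of opposite kinks} only --- the state sum $\Phi_{D_0}$ is genuinely not invariant under a single R1 (it changes by $-A^{\pm3}$), and there is no ``framing correction'' to absorb this inside the skein module itself; what one actually verifies is invariance under the balanced move, where the two factors $-A^{3}$ and $-A^{-3}$ cancel. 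A second, smaller caveat: in the twisted bundle $F\,\hat\times\,I$ the over/under datum at a crossing depends on a local orientation of the fibre, and one should observe that reversing it merely interchanges the two smoothings together with $A\leftrightarrow A^{-1}$, leaving the skein relation intact; ``passing to the orientation double cover'' is not the right move here. Neither issue affects the architecture of the proof.
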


The following corollary discusses well known combinatorial notions using relative skein modules.

\begin{corollary}\label{impcor}\
\begin{enumerate}
    \item $\mathcal{S}_{2,\infty}(D^2 \times I, \{x_i\}_1^{2n}; R, A)$ is a free $R$-module with $c_n = \frac{1}{n+1}\binom{2n}{n}$ basic elements. Here $c_n$ denotes the $n^{th}$ Catalan number. 
    \item $\mathcal{S}_{2,\infty}(A^2 \times I, \{x_i\}_1^{2n}; R, A)$ is a free $R[z]$ - module with ${2n\choose n}$ basic elements, where $z$ denotes the homotopically non - trivial curve on the annulus.
    
\end{enumerate}

\end{corollary}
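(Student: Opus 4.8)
\emph{Proof proposal.} The plan is to deduce both statements directly from the preceding theorem of Przytycki, which identifies $\mathcal{S}_{2,\infty}(F\times I,\{x_i\}_1^{2n};R,A)$ (for $\partial F\neq\varnothing$) with the free $R$-module on relative links in $F$ having no trivial components. Thus in each case the task reduces to a combinatorial enumeration of such relative links, and I would carry it out surface by surface.

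For (1) take $F=D^2$ with the $2n$ framed points on $\partial D^2$. A relative link in the disc is a disjoint union of properly embedded arcs and simple closed curves; every simple closed curve in $D^2$ bounds a disc and is trivial, so a relative link with no trivial components is exactly a system of $n$ pairwise disjoint arcs joining the $2n$ boundary points, i.e. a non-intersecting connection ("crossingless matching") of $2n$ points on a circle. These are counted by the classical Catalan number: fixing a point of $\partial D^2$ outside the $x_i$ and reading cyclically, record $+1$ when a point is the first endpoint of its arc and $-1$ when it is the second; non-intersection is equivalent to all partial sums being $\ge 0$, so connections correspond bijectively to Dyck paths of length $2n$, of which there are $c_n=\frac{1}{n+1}\binom{2n}{n}$. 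Hence the module is free of rank $c_n$ over $R$.

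For (2) take $F=A^2$ with the $2n$ points on one boundary circle. Again a relative link without trivial components is the disjoint union of a non-intersecting arc system joining the $2n$ points and some number $k\ge 0$ of simple closed curves, each of which is essential. Every essential simple closed curve in the annulus is isotopic to the core $z$, any disjoint collection of them consists of mutually parallel copies of $z$, and such copies can always be pushed into the innermost region of the complement of the arc system — the region containing the hole — so that the isotopy class of the whole relative link is determined by the arc system together with the integer $k$. Therefore the $R$-basis supplied by the theorem is $\{\,z^k\cdot C:\ k\ge 0,\ C\text{ a non-intersecting arc system in }A^2\,\}$, which says precisely that $\mathcal{S}_{2,\infty}(A^2\times I,\{x_i\}_1^{2n};R,A)$ is a free $R[z]$-module on the set of arc systems. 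To count those, cut $A^2$ along a radial arc $\rho$ between $x_{2n}$ and $x_1$; after isotoping each arc to meet $\rho$ at most once, a non-intersecting matching in the annulus becomes a non-intersecting picture in the resulting disc in which strands are allowed to "wrap'' across $\rho$. Encoding first/second endpoints by $\pm1$ as in (1), wrapping exactly removes the non-negativity constraint on partial sums, so annular matchings correspond bijectively to arbitrary balanced $\pm1$-sequences of length $2n$, equivalently to $n$-element subsets of $\{1,\dots,2n\}$; there are $\binom{2n}{n}$ of them.

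The two sign-sequence bijections are routine. The step I expect to be the real obstacle is the complementary-region analysis in the annular case: one must show that the hole always lies in a single well-defined innermost region of an arc system, that $k$ parallel copies of $z$ placed there are unique up to isotopy disjoint from the arcs — \emph{including} when some arcs themselves wrap around the hole — so that the decomposition of every no-trivial-component relative link as $z^k\cdot C$ is unique and the $R[z]$-module structure is genuinely free; and, hand in hand with this, that the wrapping/unwrapping isotopies in the cut-open picture are accounted for by exactly dropping the non-negativity condition, so that the count is $\binom{2n}{n}$ and not some other balanced-sequence statistic. This is the part of the argument most easily gotten wrong, so I would write it out carefully with explicit pictures of the cut annulus.
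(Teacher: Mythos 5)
Your proposal is correct and follows exactly the route the paper intends: the corollary is deduced from Przytycki's theorem by enumerating crossingless connections, with the Dyck-path/Catalan count in the disc and the $\binom{2n}{n}$ count of annular matchings (together with powers of the core curve $z$ giving the $R[z]$-structure) in the annulus; the paper itself offers no further proof, treating these counts as standard. Your flagged concern about the innermost-region/wrapping analysis is reasonable diligence but does not indicate any divergence from the paper's argument.
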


\subsection{The Temperley - Lieb algebra and the Jones - Wenzl idempotent}

The relative Kauffman bracket skein module of $D^2 \times I$ can be equipped with an algebra structure which leads to the classical Temperley - Lieb algebra (see \cite{TL}).

\begin{definition}\

Let $R$ be a commutative ring with unity and an invertible element $A$. Further, let $d \ = -A^{-2} - A^2 \in R.$ The Temperley - Lieb Algebra, denoted by $TL_n(d)$, is an $R$ - algebra generated by $n$ elements, $\{ \mathbb{1}, e_1, e_2,\ldots, e_{n-1}\}$ with relations:

\begin{enumerate}
    \item $e_i^2 = de_i$ for $1 \leq i \leq n-1$,
    \item $e_ie_je_i = e_i$ $ \forall |i - j| = 1$, and
    \item $e_ie_j = e_je_i$ $ \forall |i - j| > 1$. 
\end{enumerate}
 
\end{definition}

Diagrammatically, the elements of the basis of $TL_n(d)$, as an $R$ - module, can be represented as crossingless connections between the $2n$ marked points on the boundary of a rectangle with $n$ points each on its top edge and bottom edge (see Figure \ref{Temp}). For further reading, see \cite{Kau}.

\begin{figure}\label{Temp}
\centering
\begin{subfigure}{.45\textwidth}
\centering
\begin{overpic}[unit=1mm, scale = 1.5]{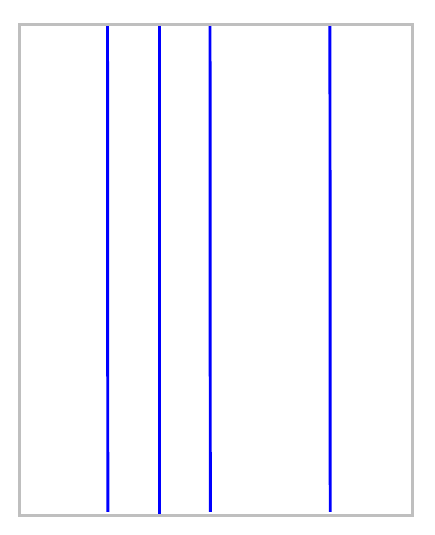}

\put(15,79){$1$}
\put(23,79){$2$}
\put(31,79){$3$}
\put(39,70){$\hdots$}
\put(39,10){$\hdots$}
\put(49,79){$n$}
%\put(49,60){$b_i$}

%\put(49, 38){$\bar{b_j}$}
%\put(50,32){$\hdots$}
%\put(3, -4){The Generalized type $A$ Bilinear Form}
\end{overpic}
\caption{The identity element $\mathbb{1}$} \label{fig1}
\end{subfigure}
\begin{subfigure}{.45\textwidth}
\centering
\begin{overpic}[unit=1mm, scale = 1.5]{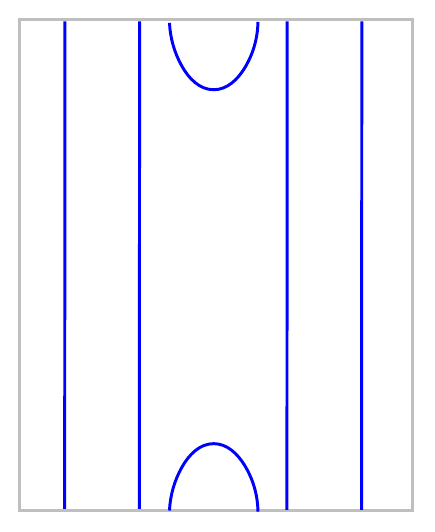}

\put(9,79){$1$}
%\put(18,79){$i-1$}
\put(25,79){$i$}
\put(35,79){$i+1$}
%\put(40,74){$i+2$}
\put(54,79){$n$}
\put(47,64){$\hdots$}
\put(47,10){$\hdots$}
\put(13,64){$\hdots$}
\put(13,10){$\hdots$}
%\put(46,74){$n$}
%\put(49,60){$b_i$}

%\put(49, 38){$\bar{b_j}$}
%\put(50,32){$\hdots$}
%\put(3, -4){The Generalized type $A$ Bilinear Form}
\end{overpic}
\caption{$e_i, 1 \leq i \leq  n-1$} \label{fig2}
\end{subfigure}
\caption{The generators of $TL_n(d).$}\label{Temp}
\end{figure}

%\begin{figure}
   % \centering
   % \includegraphics{TLidentityn.pdf}
   % \caption{Caption}
   % \label{fig:my_label}
%\end{figure}

%\begin{remark}

%For an oriented surface $F$, let $x_1, x_2, \ldots, x_n \in F \times {0}$ and $x_{i+1}, x_{1+2}, \ldots, x_{2n} \in F \times {1}$, where $x_i$ and $x_{i+n}$ project to the same point in $F$. Then $\mathcal{S}_{2,\infty}(F \times I, \{x_i\}_1^{2n};M, R, A)$ has a natural algebra structure on it where the multiplication of two relative links $L_1.L_2$ is defined by placing $L_1$ in $F \times [\frac{1}{2},1]$ and $L_2$ in $F \times [0,\frac{1}{2}]$. For $D^2$, this algebra is the Temperley - Lieb Algebra, while for the annulus, the algebra is the Annular Temperley - Lieb Algebra ($ATL_n(d)$). 

%\end{remark}

There is a natural bilinear form on the elements of $TL_n(d)$ which Lickorish introduced to construct the Witten - Reshetikhin - Turaev invariants of $3$ - manifolds. The determinant of this bilinear form is called the Gram Determinant of type $A$ which we will discuss in the following subsection.

\begin{definition}\cite{Jon1, Wen}\

Let $p: \mathbb{B}_n \longrightarrow S_n$ be a map, where $\mathbb{B}_n$ denotes the Artin braid group and $S_n$, the permutation group. This function sends a braid word to the induced permutation. Define $F_n = \sum\limits_{\pi \in S_n} (A^3)^{|\pi|}b_{\pi} \in \mathbb{Z}[A^{\pm1}]\mathbb{B}_n$, where $|\pi|$ is the length of the permutation $\pi$ and $b_{\pi}$ is the unique, minimal, positive braid such that $p(b_{\pi}) = \pi$. The $n^{th}$ Jones - Wenzl idempotent, denoted by $f_n,$ is defined to be $\frac{F_n}{(\{n\}_{A^4})!} \in \mathbb{Q}(A)\mathbb{B}_n$, where $\{n\}_q = 1 + q + q^2 + \cdots + q^{n - 1}$, and $(\{n\}_q)! = \{1\}_q\cdot\{2\}_q\cdots\{n\}_q$.\footnote{The extension of $\mathbb{Z}[A^{\pm1}]$ to the field of rational functions, $\mathbb{Q}(A),$ does not change the Gram determinant.}

\end{definition}

The Jones - Wenzl idempotent is evaluated in $TL_n(d)$ by taking the quotient of $\mathbb{Q}(A)\mathbb{B}_n$ by the Kauffman bracket skein relations.

\begin{theorem}\cite{Jon1, KL, Lic}\

  The $n^{th}$ Jones - Wenzl idempotent, $f_n \in TL_n(d),$ satisfies the following properties:

\begin{enumerate}
    \item $f_ne_i = 0 =  e_if_n $, for $1 \leq i \leq n-1$,
    \item $(f_n - \mathbb{1})$ belongs to the algebra generated by $\{e_1, e_2, \ldots, e_{n-1}\}$, and \item $f_nf_n = f_n$. 
\end{enumerate}

\end{theorem}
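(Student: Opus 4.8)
The plan is to prove the three properties in the order (1), (2), (3), throughout working over $\mathbb{Q}(A)$ so that $(\{n\}_{A^4})!$ is invertible and $f_n$ denotes the image of $F_n/(\{n\}_{A^4})!$ in $TL_n(d)$. \emph{Property (1)} comes directly from the defining sum. Under the Kauffman bracket relation the braid generator $\sigma_i$ maps to $A\mathbb{1}+A^{-1}e_i$ in $TL_n(d)$, which together with $e_i^2=de_i$ and $d=-A^2-A^{-2}$ gives the quadratic relation $\sigma_i^2=A^{-2}\mathbb{1}+(A-A^{-3})\sigma_i$, hence $e_i=A\sigma_i-A^2\mathbb{1}$. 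I would then show $F_n\sigma_i=AF_n$ in $TL_n(d)$ by partitioning $S_n$ into pairs $\{\pi,\pi s_i\}$, with $s_i=(i,i+1)$, in which the representative $\pi$ is chosen so that $\pi(i)<\pi(i+1)$; for such $\pi$ one has $|\pi s_i|=|\pi|+1$ and $b_{\pi s_i}=b_\pi\sigma_i$, so the pair contributes $(A^3)^{|\pi|}b_\pi\sigma_i+(A^3)^{|\pi|+1}b_\pi\sigma_i^2$ to $F_n\sigma_i$, and substituting the quadratic relation and collecting terms this becomes $A\big((A^3)^{|\pi|}b_\pi+(A^3)^{|\pi s_i|}b_{\pi s_i}\big)$; summing over all pairs gives $F_n\sigma_i=AF_n$. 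Then $F_ne_i=F_n(A\sigma_i-A^2\mathbb{1})=A^2F_n-A^2F_n=0$, so $f_ne_i=0$; since the ``upside-down'' anti-automorphism of $TL_n(d)$ fixes each $e_i$ and fixes $f_n$ (reversing a reduced word for $\pi$ yields one for $\pi^{-1}$, and $|\pi^{-1}|=|\pi|$, so it fixes $F_n$), applying it to $f_ne_i=0$ gives $e_if_n=0$.

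For \emph{property (2)} I would induct on $n$, the case $n=1$ being trivial (in $TL_1(d)$ there are no $e_i$ and $f_1=\mathbb{1}$). The key input is the standard Coxeter fact that $S_n=S_{n-1}\cdot\{\,e,\ s_{n-1},\ s_{n-1}s_{n-2},\ \dots,\ s_{n-1}s_{n-2}\cdots s_1\,\}$ with lengths adding, so that $b_{uc}=b_ub_c$ for $u\in S_{n-1}$ and $c$ in this set of minimal coset representatives, and therefore $F_n=F_{n-1}\,C_n$ with $C_n=\sum_{k=0}^{n-1}(A^3)^k\,\sigma_{n-1}\sigma_{n-2}\cdots\sigma_{n-k}$. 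Expanding each $\sigma_j$ as $A\mathbb{1}+A^{-1}e_j$, the $k$-th summand of $C_n$ equals $A^{4k}\mathbb{1}$ plus terms lying in the non-unital subalgebra $J\subseteq TL_n(d)$ generated by $e_1,\dots,e_{n-1}$ (this is the natural reading of the statement, since the \emph{unital} algebra generated by the $e_i$ is all of $TL_n(d)$), so $C_n=\{n\}_{A^4}\mathbb{1}+j_0$ with $j_0\in J$; dividing by the factorials, $f_n=f_{n-1}\big(\mathbb{1}+j_0/\{n\}_{A^4}\big)$, and writing $f_{n-1}=\mathbb{1}+j_1$ with $j_1\in J$ by the inductive hypothesis and expanding the product shows $f_n-\mathbb{1}\in J$. (Alternatively, one can verify directly that the coefficient of $\mathbb{1}$ in $f_n$ is $1$, using the Poincar\'e-polynomial identity $\sum_{\pi\in S_n}q^{|\pi|}=\prod_{k=1}^n\{k\}_q$ at $q=A^4$, and then invoke the standard fact that every non-identity element of the diagram basis of $TL_n(d)$ is a product of the $e_i$.)

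\emph{Property (3)} is then formal: by (2), $f_n=\mathbb{1}+x$ with $x$ a linear combination of products $e_{j_1}\cdots e_{j_m}$, $m\ge1$, and by (1) $f_ne_{j_1}=0$, so $f_nx=0$ and $f_n^2=f_n(\mathbb{1}+x)=f_n+f_nx=f_n$. The point I expect to be delicate is the passage from $\mathbb{B}_n$ to $TL_n(d)$ in (1): one must check carefully that $b_{\pi s_i}=b_\pi\sigma_i$ when $\pi(i)<\pi(i+1)$ (this rests on the subword property for $|\cdot|$ in $S_n$ and on the well-definedness of $b_\pi$, via Matsumoto's theorem) and then organize the substitution of $\sigma_i^2$ so that the cross-terms within each pair cancel; the Coxeter-theoretic coset decomposition behind (2) is standard but should be stated explicitly.
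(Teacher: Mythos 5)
The paper does not prove this theorem at all: it is stated with citations to \cite{Jon1, KL, Lic}, so there is no in-paper argument to compare against. Your proof is correct and is essentially the classical one from those sources (Jones's approach via the averaged sum over $S_n$). The computations check out: with $\sigma_i=A\mathbb{1}+A^{-1}e_i$ and $d=-A^2-A^{-2}$ one indeed gets $\sigma_i^2=A^{-2}\mathbb{1}+(A-A^{-3})\sigma_i$ and $e_i=A\sigma_i-A^2\mathbb{1}$; the pairing $\{\pi,\pi s_i\}$ with $\ell(\pi s_i)=\ell(\pi)+1$ and $b_{\pi s_i}=b_\pi\sigma_i$ (valid by Matsumoto, as you note) gives $F_n\sigma_i=AF_n$ and hence $F_ne_i=0$, with $e_iF_n=0$ following from the flip anti-automorphism since $|\pi^{-1}|=|\pi|$; the parabolic coset decomposition $S_n=S_{n-1}\cdot\{e,s_{n-1},\dots,s_{n-1}\cdots s_1\}$ with additive lengths yields $F_n=F_{n-1}C_n$ and the coefficient $\{n\}_{A^4}$ of $\mathbb{1}$ in $C_n$, giving (2) by induction; and (3) is then formal from (1) and (2). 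You were also right to flag the reading of (2) as membership in the non-unital subalgebra generated by the $e_i$ (equivalently, that the coefficient of $\mathbb{1}$ in $f_n$ is $1$), which is the intended meaning and is exactly what makes (3) work. The only caveat worth recording is the one you already noted: all of this takes place over $\mathbb{Q}(A)$ (or a ring where $(\{n\}_{A^4})!$ is invertible), consistent with the paper's footnote.
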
 
The $n^{th}$ Jones-Wenzl idempotent is represented by a rectangular box (see Figure \ref{fn}) with $n$ curves entering and $n$ curves exiting it. The integer $n$ beside a curve represents $n$ parallel copies of it.

\begin{figure}[ht]
\centering
\begin{overpic}[unit=1mm, scale = .45]{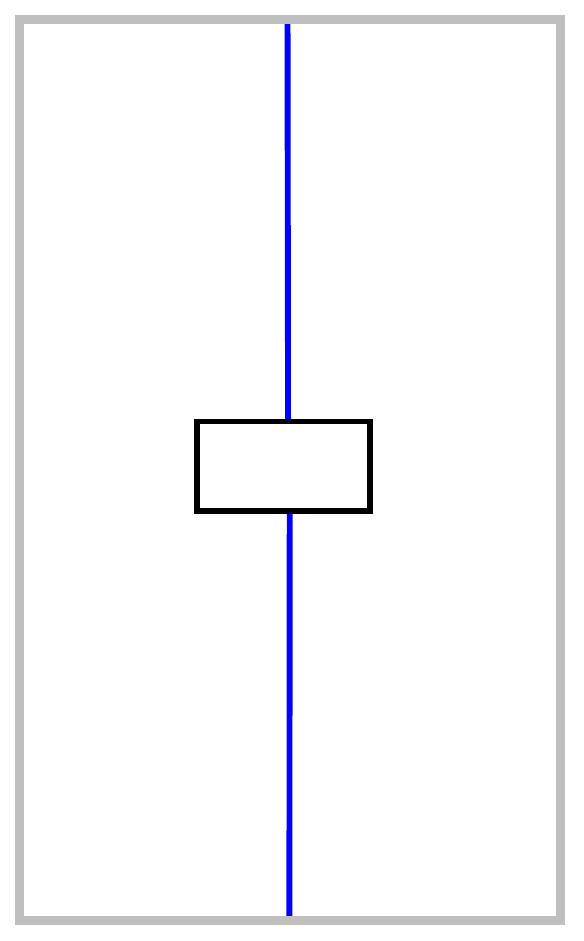}
\put(10,9){$n$}
\end{overpic}
\caption{The Jones - Wenzl idempotent $f_n.$}\label{fn}
\end{figure}

\begin{theorem} \cite{Wen}\ 

The Jones - Wenzl idempotent $f_{n+1},$ for $n > 1,$ satisfies recursive relation shown in Figure \ref{wenzl}.
\begin{figure}[ht] 
\centering
$$ \vcenter{\hbox{
%\begin{figure}[ht]
\centering
\begin{overpic}[unit=1mm, scale = .45]{fn}

\put(3.5,9){$n+1$}
%\put(1,1){$y$}
%\put(35,1){$z$}
%\put(19,34){$x$}
%\put(9, 15){$c$}
%\put(23, 22.5){$b$}
%\put(21.5, 7){$a$}
%\put(53,25){$\vdots$}
%\put(53,10){$a_{2n-1}$}
%\put(15, -3){Admissible Triple Point}
\end{overpic}
%\caption{Admissible Triple Point}
}} =  \vcenter{\hbox{
%\begin{figure}[ht]
\centering
\begin{overpic}[unit=1mm, scale = .45]{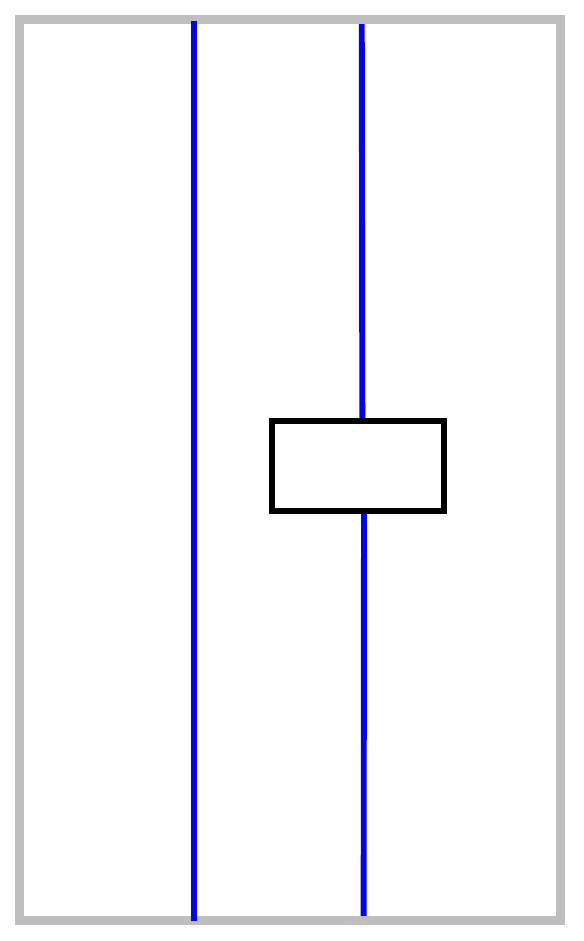}

\put(13,9){$n$}
%\put(1,1){$y$}
%\put(35,1){$z$}
%\put(19,34){$x$}
%\put(9, 15){$c$}
%\put(23, 22.5){$b$}
%\put(21.5, 7){$a$}
%\put(53,25){$\vdots$}
%\put(53,10){$a_{2n-1}$}
%\put(15, -3){Admissible Triple Point}
\end{overpic}
%\caption{Admissible Triple Point}
}}  - \frac{\Delta_{n-1}}{\Delta_n}  \vcenter{\hbox{
%\begin{figure}[ht]
\centering
\begin{overpic}[unit=1mm, scale = .45]{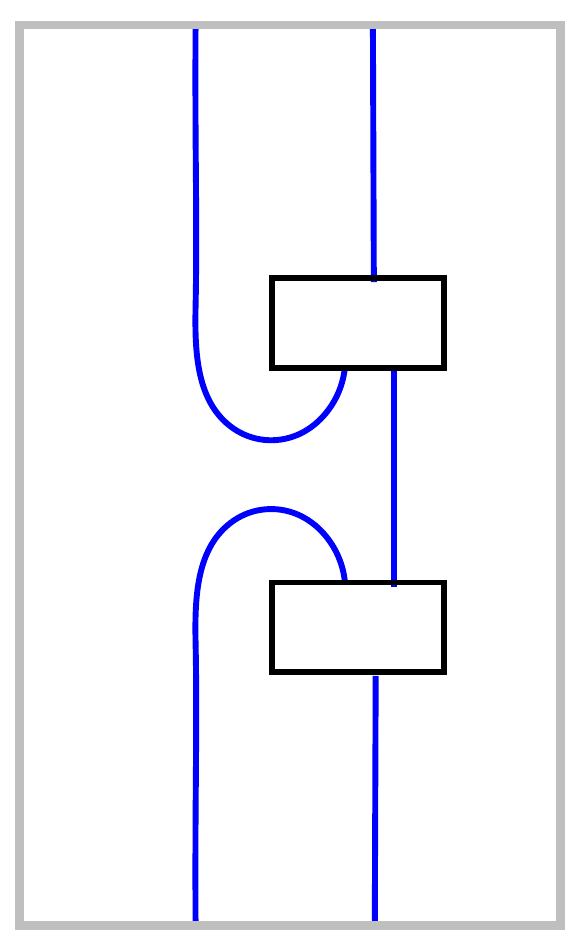}

\put(18.5,6){$n$}
\put(18.5,21){\footnotesize{$n-1$}}
\put(18.5,35){$n$}
%\put(1,1){$y$}
%\put(35,1){$z$}
%\put(19,34){$x$}
%\put(9, 15){$c$}
%\put(23, 22.5){$b$}
%\put(21.5, 7){$a$}
%\put(53,25){$\vdots$}
%\put(53,10){$a_{2n-1}$}
%\put(15, -3){Admissible Triple Point}
\end{overpic}
%\caption{Admissible Triple Point}
}} $$
\caption{The Wenzl formula for $f_{n+1}.$}
\label{wenzl}
\end{figure}

\end{theorem}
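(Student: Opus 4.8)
The plan is to verify that the element depicted on the right-hand side of Figure~\ref{wenzl} satisfies the three characterizing properties of the Jones--Wenzl idempotent listed above, and then to invoke the uniqueness of the element possessing those properties. Write $\widehat{f}_n = f_n \otimes \mathbb{1} \in TL_{n+1}(d)$ for the image of $f_n$ under the standard inclusion $TL_n(d) \hookrightarrow TL_{n+1}(d)$ adding one vertical strand on the right, let $\Delta_m \in \mathbb{Q}(A)$ denote the value of the closure of $f_m$ (a nonzero rational function, which is where passing to $\mathbb{Q}(A)$ is used), and put
$$
g_{n+1} \;=\; \widehat{f}_n \;-\; \frac{\Delta_{n-1}}{\Delta_n}\,\widehat{f}_n\, e_n\, \widehat{f}_n .
$$
Reading off the diagrams, $g_{n+1}$ is exactly the right-hand side of Figure~\ref{wenzl}: the first picture is $\widehat{f}_n$ and the second is $\widehat{f}_n e_n \widehat{f}_n$, an $e_n$-turnback (leaving $n-1$ through-strands) inserted between two copies of $f_n$. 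It then suffices to check that (i) $g_{n+1}-\mathbb{1}$ lies in the subalgebra generated by $e_1,\dots,e_n$, and (ii) $e_i g_{n+1}=g_{n+1}e_i=0$ for $1\le i\le n$, and finally (iii) that (i) and (ii) force $g_{n+1}=f_{n+1}$.

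Property (i) is immediate, since $f_n-\mathbb{1}$ already lies in the subalgebra generated by $e_1,\dots,e_{n-1}$ and the second summand of $g_{n+1}$ lies in the two-sided ideal generated by $e_n$. For (ii) with $i\le n-1$: since such an $e_i$ involves only the first $n$ strands and $f_ne_i=e_if_n=0$, one has $\widehat{f}_n e_i = e_i\widehat{f}_n = 0$, which annihilates both summands of $g_{n+1}$ from the relevant side. The substantive case is $i=n$, and here the key input is the bubble-collapse identity
$$
e_n\,\widehat{f}_n\,e_n \;=\; \frac{\Delta_n}{\Delta_{n-1}}\,\bigl(f_{n-1}\otimes\mathbb{1}\otimes\mathbb{1}\bigr)\,e_n ,
$$
which records that the two turnbacks close the $n$-th strand of $f_n$ and produce the partial-trace value $\tfrac{\Delta_n}{\Delta_{n-1}}\,f_{n-1}$. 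Granting this and the absorption law $f_n\bigl(f_{n-1}\otimes\mathbb{1}\bigr)=f_n$ (valid because $f_{n-1}\otimes\mathbb{1}-\mathbb{1}$ is a combination of $e_1,\dots,e_{n-2}$), one gets $\widehat{f}_n e_n\widehat{f}_n e_n = \tfrac{\Delta_n}{\Delta_{n-1}}\,\widehat{f}_n e_n$, so that
$$
g_{n+1}e_n \;=\; \widehat{f}_n e_n \;-\; \frac{\Delta_{n-1}}{\Delta_n}\cdot\frac{\Delta_n}{\Delta_{n-1}}\,\widehat{f}_n e_n \;=\; 0 ,
$$
and $e_n g_{n+1}=0$ follows from the mirror computation. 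For (iii): if $g$ satisfies (i) and (ii), then $(g-\mathbb{1})f_{n+1}=0$ since every word in the $e_i$ ends in some $e_j$ with $e_jf_{n+1}=0$, so $gf_{n+1}=f_{n+1}$; moreover $\mathbb{1}-f_{n+1}$ lies in the subalgebra generated by the $e_i$ while $ge_j=0$ for all $j$, so $g(\mathbb{1}-f_{n+1})=0$; adding these, $g=gf_{n+1}+g(\mathbb{1}-f_{n+1})=f_{n+1}$.

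Everything therefore reduces to the bubble-collapse identity, equivalently to the partial-trace formula $\mathrm{tr}_n(f_n)=\tfrac{\Delta_n}{\Delta_{n-1}}\,f_{n-1}$, where $\mathrm{tr}_n\colon TL_n(d)\to TL_{n-1}(d)$ denotes closing the last strand. To establish it I would set $y=\mathrm{tr}_n(f_n)\in TL_{n-1}(d)$ and argue that, since $\mathrm{tr}_n$ is $TL_{n-1}(d)$-bilinear and $f_n e_i=e_if_n=0$, the element $y$ is annihilated on both sides by every generator of $TL_{n-1}(d)$; a short diagrammatic observation shows $f_{n-1}\,TL_{n-1}(d)\,f_{n-1}=\mathbb{Q}(A)\,f_{n-1}$ (every diagram other than the identity has a turnback killed by an adjacent $f_{n-1}$), so from $y=f_{n-1}\,y\,f_{n-1}$ we get $y=c\,f_{n-1}$ for a scalar $c$; finally, closing the remaining $n-1$ strands and comparing with the definitions of $\Delta_n$ and $\Delta_{n-1}$ pins down $c=\Delta_n/\Delta_{n-1}$. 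The hypothesis $n>1$ is exactly what guarantees there is a strand to close and that $f_{n-1}$ is defined. I expect this partial-trace normalization to be the only real obstacle --- keeping the diagrammatic bookkeeping honest and making sure no denominator vanishes (which is why one works over $\mathbb{Q}(A)$) --- with the rest being formal manipulation in $TL_{n+1}(d)$ using the already-established properties of $f_n$.
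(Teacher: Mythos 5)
Your argument is correct. The paper does not prove this statement --- it is imported verbatim from Wenzl's paper \cite{Wen} (see also \cite{KL, Lic}) --- so there is no in-paper proof to compare against; but what you give is essentially the standard proof: characterize $f_{n+1}$ by the properties $e_i f_{n+1} = f_{n+1} e_i = 0$ and $f_{n+1} - \mathbb{1} \in \langle e_1, \dots, e_n \rangle$, check that the right-hand side of the recursion satisfies them, and reduce everything to the partial-trace normalization $\mathrm{tr}_n(f_n) = \tfrac{\Delta_n}{\Delta_{n-1}} f_{n-1}$, which you establish correctly via $f_{n-1}\,TL_{n-1}(d)\,f_{n-1} = \mathbb{Q}(A)\,f_{n-1}$ and closure of all strands. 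The only step you leave slightly compressed is ``$e_n g_{n+1}=0$ by the mirror computation'': the cleanest justification is either that $g_{n+1}$ is fixed by the reflection anti-automorphism of $TL_{n+1}(d)$, or the direct observation that $f_{n-1}\otimes\mathbb{1}\otimes\mathbb{1}$ commutes with $e_n$ (disjoint supports) and is absorbed by $\widehat{f}_n$, so $(f_{n-1}\otimes\mathbb{1}\otimes\mathbb{1})\,e_n\,\widehat{f}_n = e_n\,\widehat{f}_n$. With that noted, the proof is complete.
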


\subsection{The Gram Determinant of Type A}

\begin{definition}\

Consider the disc $D^2$ with $2n$ framed points on its boundary. Let $\mathcal{A}_n = \{a_1, a_2,\ldots, a_{c_n}\}$ be the set of all diagrams with crossingless connections, up to ambient isotopy, between the $2n$ framed points in $D^2$. %These are the generators of the RKBSM of $D^2$. 
Define a bilinear form $\langle \ , \ \rangle$ in the following way: $$\langle \ , \ \rangle : \mathcal{S}_{2,\infty}(D^2 \times I, \{x_i\}_1^{2n}) \times \mathcal{S}_{2,\infty}(D^2 \times I, \{x_i\}_1^{2n}) \longrightarrow R.$$

Let $a_i, a_j \in \mathcal{A}_n.$ Glue $a_i$ with the inversion of $a_j$ along the marked circle, respecting the labels of the framed points. The resulting picture is that of a disc with disjoint null homotopic circles. Thus, we define, $\langle a_i , a_j\rangle = d^m$ where m denotes the number of these circles.  Example \ref{exam18} illustrates an example of the bilinear form when $n = 4$.

The Gram matrix of type $A$ is defined as $G_n^{A} = ( \langle a_i , a_j\rangle ) _{1 \leq i,j \leq c_n} $. Its determinant $D_n^{A}$ is called the Gram determinant of type $A$.

\end{definition}

\begin{example}\label{exam18}
$$ \left\langle \vcenter{\hbox{\includegraphics[scale = .25]{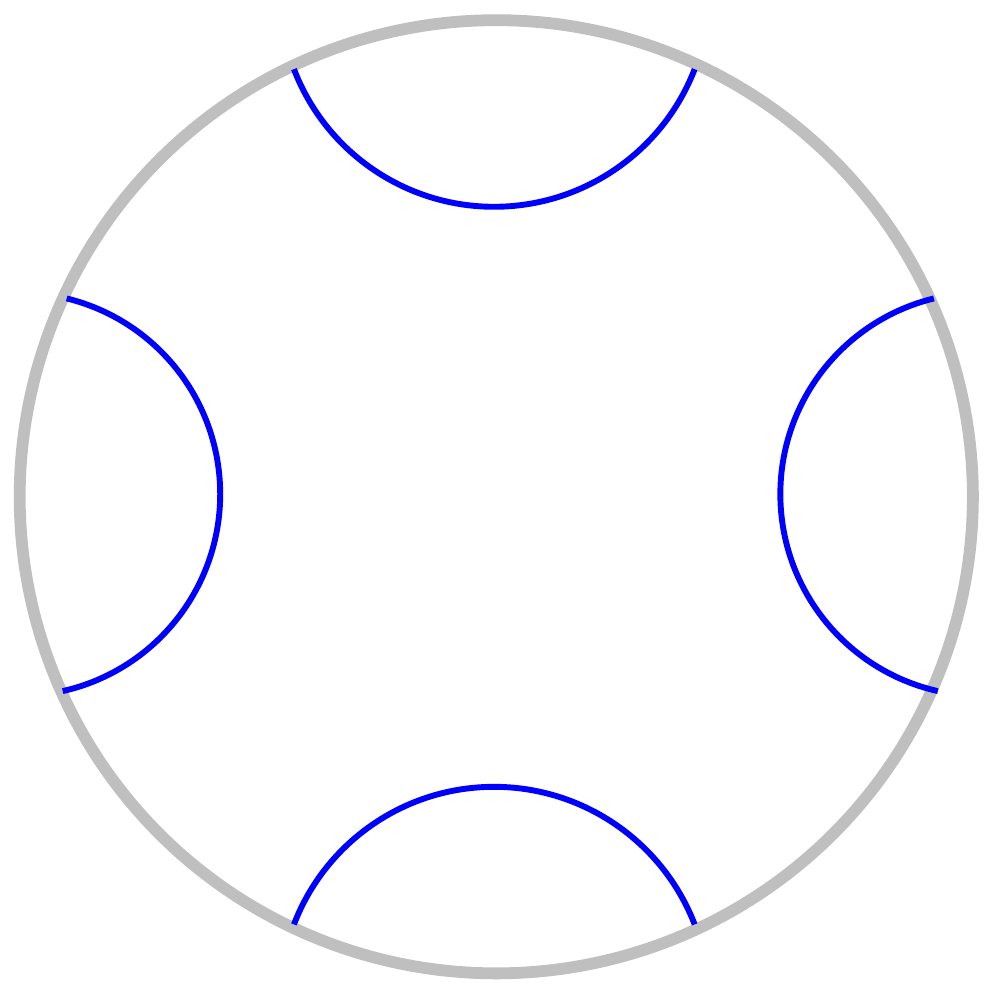}}} , \vcenter{\hbox{\includegraphics[scale = .25]{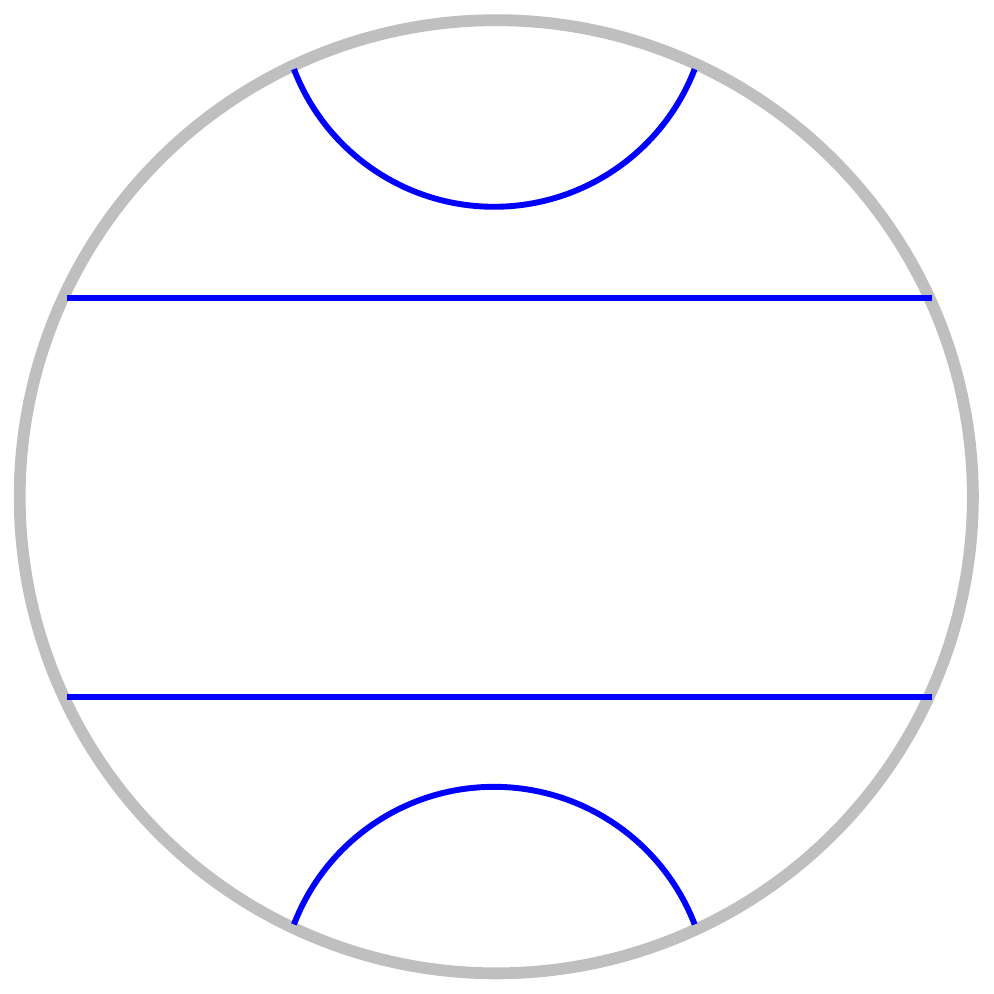}}} \right\rangle = \vcenter{\hbox{\includegraphics[scale = .25]{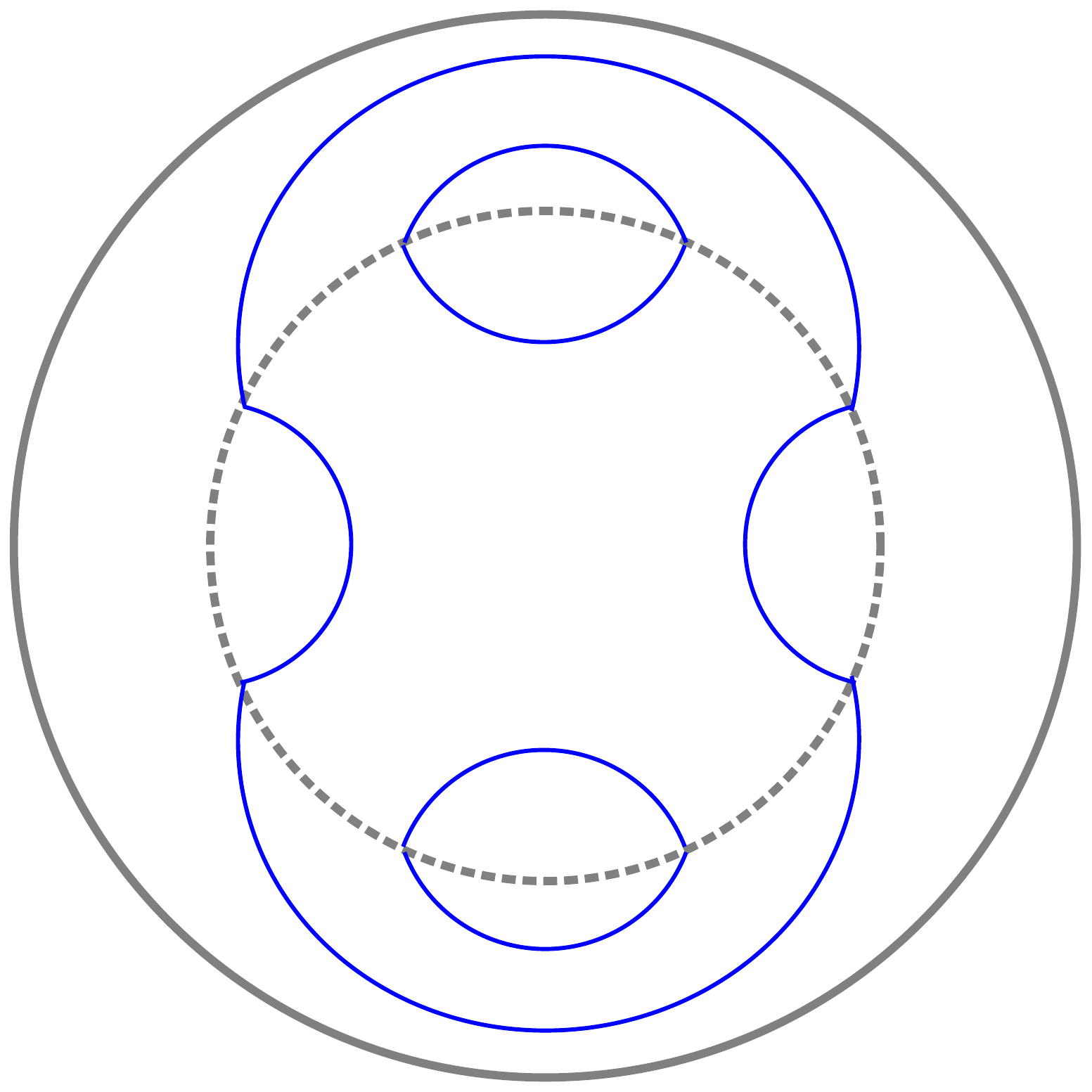}}} = d^3$$
\end{example}

\begin{theorem}\cite{Wes, DiF, Cai}\

Let $R = \mathbb{Z}[A^{\pm 1}].$ Then,
\begin{equation*}
D_n^{A}(d) = \Delta _1^{c_n}\prod\limits_{i=1}^{n}(\frac{\Delta _i}{\Delta_{i-1}})^{\alpha _i}, \text{where }  \Delta_i = (-1)^i \frac{A^{2i+2}-A^{-2i-2}}{A^2-A^{-2}} \text{ and } \alpha _i = {{2n}\choose {n-i}} - {{2n} \choose {n - i - 1}}.
\end{equation*}
\end{theorem}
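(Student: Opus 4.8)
The plan is to follow Cai's method \cite{Cai}: build from the Jones-Wenzl idempotents a new basis of the free $R$-module with basis $\mathcal{A}_n$ (Corollary \ref{impcor}) in which the form $\langle\ ,\ \rangle$ becomes block diagonal, and then compute the determinant of each block. Fix a diameter of $D^2$ separating the $2n$ boundary points into two arcs of $n$ points each. For $a\in\mathcal{A}_n$, let $k=k(a)$ be the number of arcs of $a$ meeting that diameter; then $k\equiv n\pmod 2$, $0\le k\le n$, and cutting $D^2$ along the diameter exhibits $a$ uniquely as an upper half-diagram $\beta$ and a lower half-diagram $\gamma$, each a crossingless matching of the $n$ arc-points to $k$ points on the diameter. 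Writing $D_{n,k}$ for the set of such half-diagrams with $k$ through-strands, one has $g_{n,k}:=|D_{n,k}|=\binom{n}{(n-k)/2}-\binom{n}{(n-k)/2-1}$ and a bijection $\mathcal{A}_n\longleftrightarrow\bigsqcup_{k}D_{n,k}\times D_{n,k}$, $a\mapsto(\beta,\gamma)$.

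For $\beta,\gamma\in D_{n,k}$, let $\widetilde{v}_{\beta,\gamma}$ be the skein obtained from $a_{\beta,\gamma}$ by inserting $f_k$ along its $k$ through-strands. By Wenzl's recursion (Figure \ref{wenzl}), $f_k=\mathbb{1}+(\text{an element of the algebra generated by }e_1,\dots,e_{k-1})$, and every nontrivial monomial in the $e_i$, once inserted along the through-strands, produces a connection of strictly smaller through-degree; hence $\widetilde{v}_{\beta,\gamma}=a_{\beta,\gamma}+(\text{connections of through-degree}<k)$. Ordering the basis by decreasing through-degree, the change of basis $\mathcal{A}_n\to\{\widetilde{v}_{\beta,\gamma}\}$ is block unitriangular, hence has determinant $\pm1$, so $D_n^{A}=\det\bigl(\langle\widetilde{v}_{\beta,\gamma},\widetilde{v}_{\beta',\gamma'}\rangle\bigr)$.

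Now evaluate the matrix of the form in the new basis. By bilinearity, $\langle\widetilde{v}_{\beta,\gamma},\widetilde{v}_{\beta',\gamma'}\rangle$ is read off the picture obtained by gluing the two $f$-decorated diagrams: it contains $f_k$ and $f_{k'}$ joined by crossingless half-diagrams, and when $k\ne k'$ the interpolating tangle must contain a cup abutting the larger idempotent, so the pairing vanishes since Jones-Wenzl idempotents annihilate the $e_i$ on both sides ($f_me_i=0=e_if_m$); thus the matrix is block diagonal in $k$. Within a fixed block, repeatedly using $f_k^2=f_k$ and the one-dimensionality of the corner $f_k\,TL_k(d)\,f_k$ to collapse the two idempotents against the surrounding half-diagrams gives
$$\langle\widetilde{v}_{\beta,\gamma},\widetilde{v}_{\beta',\gamma'}\rangle=\Delta_k\,\Phi^{(k)}_{\beta\beta'}\,\Phi^{(k)}_{\gamma\gamma'},$$
where $\Delta_k$ is the value of the closed $k$-th Jones-Wenzl idempotent (the quantity in the statement, as in Figure \ref{wenzl}) and $\Phi^{(k)}=\bigl(\Phi^{(k)}_{\beta\beta'}\bigr)_{\beta,\beta'\in D_{n,k}}$ is the Gram matrix of the induced form on half-diagrams with $k$ through-strands (equivalently, on the cell module of $TL_n(d)$ with $k$ defects). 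So the $k$-block equals $\Delta_k\cdot\bigl(\Phi^{(k)}\otimes\Phi^{(k)}\bigr)$, and hence, up to a fixed power of $d$ absorbing the normalization of $\langle\ ,\ \rangle$,
$$D_n^{A}=\prod_{k}\Delta_k^{\,g_{n,k}^{2}}\bigl(\det\Phi^{(k)}\bigr)^{2g_{n,k}}.$$

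It remains to evaluate $\det\Phi^{(k)}$ and to bookkeep. One computes $\det\Phi^{(k)}$ by a downward induction on $k$: peeling an outermost cup off the half-diagrams presents $\Phi^{(k)}$ for $n$ strands in terms of the analogous matrices for $n-1$ strands, each reduction contributing a ratio $\Delta_{j+1}/\Delta_j$, so that $\det\Phi^{(k)}$ is an explicit monomial in the ratios $\Delta_j/\Delta_{j-1}$. Substituting these into the product above and collecting, for each $j$, the total exponent of $\Delta_j/\Delta_{j-1}$, the binomial identities satisfied by the ballot numbers $g_{n,k}$ make everything telescope to the exponents $\alpha_i=\binom{2n}{n-i}-\binom{2n}{n-i-1}$, together with the prefactor $\Delta_1^{c_n}$. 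I expect this last step to be the main obstacle: the diagrammatic reductions above are clean and essentially formal, but turning the recursion for $\det\Phi^{(k)}$ into the stated closed form -- matching it term by term against the $\alpha_i$ and pinning down $\Delta_1^{c_n}$ -- is precisely where the combinatorics of non-crossing matchings has to be carried out in full.
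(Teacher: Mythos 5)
The paper does not actually prove this theorem: it is quoted from \cite{Wes, DiF, Cai}, and the only method the paper works with (for its generalization in Section 2) is Cai's, namely the tree basis $\mathcal{D}_n$ of Jones--Wenzl--decorated diagrams indexed by sequences $(a_1,\dots,a_{2n-1})$, which \emph{fully diagonalizes} the form via a unitriangular change of basis, with diagonal entries evaluated by bubble reduction of theta nets and exponents counted by the reflection principle (Lemma \ref{lattice path}). Your route is genuinely different in its second half: you only \emph{block}-diagonalize by through-degree and factor each block as $\Delta_k\,\Phi^{(k)}\otimes\Phi^{(k)}$, where $\Phi^{(k)}$ is the Gram matrix of the cell module with $k$ defects. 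The structural part of your argument is correct: the count $g_{n,k}$, the unitriangularity of the insertion of $f_k$ (which is just property (2) of the idempotents, $f_k-\mathbb{1}\in\langle e_1,\dots,e_{k-1}\rangle$, plus the fact that inserting a non-identity diagram strictly lowers through-degree), the vanishing of the pairing for $k\neq k'$, and the identity $\mathrm{tr}(f_kXf_kY)=\lambda(X)\lambda(Y)\Delta_k$ coming from the one-dimensionality of $f_kTL_k f_k$ all check out, and indeed $\sum_k g_{n,k}^2=c_n$ so the block sizes are right.

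The gap is that the quantitative heart of the theorem is exactly the step you defer. The assertion that $\det\Phi^{(k)}$ can be computed ``by peeling an outermost cup, each reduction contributing a ratio $\Delta_{j+1}/\Delta_j$'' is not a proof: the cell-module Gram determinant is itself a nontrivial theorem (it is essentially the content of Westbury's paper), and no recursion, induction hypothesis, or closed form is actually exhibited, so the reduction you give replaces the stated theorem by an equally hard unproved one. Relatedly, your intermediate formula produces the prefactor $\prod_k\Delta_k^{g_{n,k}^2}$, not $\Delta_1^{c_n}$, and the phrase ``up to a fixed power of $d$ absorbing the normalization'' hides a real issue: one must verify that the total exponent of each $\Delta_j/\Delta_{j-1}$, summed over the contributions from $\prod_k\Delta_k^{g_{n,k}^2}$ and from $(\det\Phi^{(k)})^{2g_{n,k}}$, equals $\alpha_j$ and that the leftover power of $\Delta_1$ is exactly $c_n$ (a low-$n$ sanity check of the normalization of $\langle\,,\,\rangle$ against the stated formula is also in order here). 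By contrast, Cai's full diagonalization avoids ever introducing $\det\Phi^{(k)}$: each diagonal entry is an explicit product of ratios $\Theta(a_i,a_{i\pm1},1)/\Delta_{a_i}=\Delta_{a_{i\pm1}}/\Delta_{a_i}$ by Theorem \ref{Iknow} and Corollary \ref{theta}, and the exponent count reduces to counting mountain paths through a given height. Either complete the evaluation of $\det\Phi^{(k)}$ and the exponent bookkeeping, or switch to the tree basis where the diagonal entries are computable term by term.
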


Note that $\Delta _1 = -A^2 - A^{-2} = d.$ Furthermore, $\Delta_i(d)$ is the Chebyshev polynomial of the second kind (see Theorem \ref{GenA}). In \cite{Cai}, the author used Jones - Wenzl idempotents to construct a new basis of $TL_n(d)$ and provided a new proof of the formula for the Gram determinant of type $A$ using it.

\subsection{The Gram Determinant of Type B}
The Gram determinant of type $B$ was first considered by Rodica Simion while working with matrices of chromatic joins \cite{Sim1, Sch}. Jones - Wenzl idempotents and Chebyshev polynomials of the first kind were used to find the closed formula of the Gram determinant of type $B$  in \cite{Ch-P}.

\begin{definition}\

Let $A^2$ be an annulus with $2n$ marked points on its outer boundary. Let $\mathcal{B}_n = \{b_1, b_2, \ldots, b_{2n \choose n}\}$ be the set of all diagrams of crossingless connections between these $2n$ points. Define a bilinear form $\langle \ , \ \rangle$ in the following way: $$\langle \ , \ \rangle : \mathcal{S}_{2,\infty}(A^2 \times I, \{x_i\}_1^{2n}; R, A) \times \mathcal{S}_{2,\infty}(A^2 \times I, \{x_i\}_1^{2n}; R, A) \longrightarrow R[z].$$

Given $b_i, b_j \in \mathcal{B}_n$, glue $b_i$ with the inversion of $b_j$ along the marked circle, respecting the labels of the marked points. The resulting picture has disjoint circles which are either homotopically non - trivial or null homotopic. Then, $\langle b_i , b_j\rangle = z^k d^m$, where $k$ and $m$ denote the number of these circles, respectively.

The Gram matrix of type $B$ is defined as $G_n^{B} = (\langle b_i , b_j\rangle) _{1 \leq i, j \leq {2n \choose n}}.$ Its determinant $D_n^B$ is called the Gram determinant of type $B$.

\end{definition}

The following theorem, evaluating this determinant, answers Rodica Simion's question. 

\begin{theorem}\cite{Ch-P,M-Sa}\

Let $R = \mathbb{Z}[A^{\pm 1}].$ Then, $D_n^{B}(d,z)=  \prod \limits_{i=1}^n(T_i(d)^2-z^2)^{2n \choose n-i},$ where $T_i$ denotes the Chebyshev polynomial of the first kind defined recursively by the equation $T_{n+1}(d) = d\cdot T_n(d) - T_{n-1}(d)$, with the initial conditions $T_0(d) = 2$ and $T_1(d) = d$. 
        
\end{theorem}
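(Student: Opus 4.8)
The plan is to transport Cai's idempotent proof of the type $A$ formula into the solid torus. Throughout one works over $\mathbb{Q}(A)$, and also adjoins $z$; by the footnote (and the analogous remark for $z$) this does not change $D_n^B$, and it makes the Jones--Wenzl idempotents $f_k$ available.

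\emph{Step 1: a Jones--Wenzl basis of the annular skein module.} Every $b\in\mathcal{B}_n$ splits into a family of arcs parallel to the outer boundary together with a (possibly empty) family of nested arcs running around the core. I would cable the latter through $f_k$, where $2k$ is the number of endpoints it involves, and cable the ``disc part'' through lower idempotents exactly as Cai does in $TL_n(d)$, obtaining a new spanning set $\mathcal{B}_n'$. Using the Wenzl recursion (Figure \ref{wenzl}) one checks that $\mathcal{B}_n'$ is again a basis and that the transition matrix from $\mathcal{B}_n$ to $\mathcal{B}_n'$ is unitriangular for the partial order given by the number of strands running around the core. Hence its determinant is $1$, and $D_n^B$ is computed by the Gram matrix $G$ in the basis $\mathcal{B}_n'$.

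\emph{Step 2: reducing $G$ to local data at the core.} The pairing $\langle b,b'\rangle$ is obtained by gluing $b$ to the inversion of $b'$ along the marked circle and evaluating the result in $\mathcal{S}_{2,\infty}(A^2\times I)=R[z]$; the glued picture lives in a doubled surface which is again an annulus. Every idempotent in it that does \emph{not} run around the core can be absorbed using $f_k e_i=0$ and $f_kf_k=f_k$, and away from the core the evaluation is a power of $d$ dictated by the disc parts, exactly as in type $A$. What survives is, for each pair, one or two $f_k$-coloured loops near the core. Because the inversion reverses the sense in which a nested family winds, such a loop is itself homotopically trivial and contributes a second-kind factor $\Delta_k$ to the naive count; but \emph{clashes} between the two nested families produce genuinely non-contractible circles, i.e. powers of $z$, and it is the interplay of these two phenomena that must be controlled.

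\emph{Step 3 (the crux): the first-kind Chebyshev evaluation and the determinant.} The heart of the argument is a lemma, proved by induction on $k$ via the Wenzl recursion, evaluating the relevant $f_k$-decorated closed diagram in the doubled annulus: carrying $f_k$ once around a non-contractible loop and closing it up is a \emph{trace} around the core rather than a disc closure, so the quantity that appears is $T_k(d)$ (recall $T_k(\lambda+\lambda^{-1})=\lambda^k+\lambda^{-k}$), and the two ways the closing arc can be routed around the core contribute $T_k(d)-z$ and $T_k(d)+z$. Feeding this into $G$, I would carry out Cai's row and column reductions, now organised so that the $k$-th reduction peels off a factor $(T_k(d)-z)(T_k(d)+z)=T_k(d)^2-z^2$. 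I expect the genuinely delicate point to be the bookkeeping of the resulting multiplicity: one must show it equals $\binom{2n}{n-i}$ rather than the Catalan-triangle number $\alpha_i$ that governs type $A$, and this is precisely where the annular geometry departs from the disc case. Once this is in place one gets $D_n^B(d,z)=\prod_{i=1}^n\bigl(T_i(d)^2-z^2\bigr)^{\binom{2n}{n-i}}$; signs and normalisation can be fixed against $n=1$, where $G_1^B=\left(\begin{smallmatrix}d&z\\ z&d\end{smallmatrix}\right)$ and $D_1^B=d^2-z^2=T_1(d)^2-z^2$.
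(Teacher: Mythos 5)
Your route is genuinely different from the one the paper records. The paper's argument (following Chen--Przytycki) never orthogonalizes the form at all: it specializes $z=(-1)^{k-1}T_k(d)$, realizes this specialization by decorating the complementary component of a Hopf link with $f_k$, observes that the number of independent annular states then drops to $\binom{2n}{n}-\binom{2n}{n-k}$, so the nullity is at least $\binom{2n}{n-k}$ and $(T_k(d)+(-1)^kz)^{\binom{2n}{n-k}}$ divides $D_n^B$; the conjugate factor comes from the involution $z\mapsto -z$, the factors are pairwise coprime, and the remaining unit is fixed by comparing leading diagonal terms. That strategy deliberately avoids the very information your plan must produce, namely the exact diagonal (or block-diagonal) entries of the Gram matrix in a Jones--Wenzl basis.

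That is where your proposal has a genuine gap. First, Step 1 cannot work as stated: no unitriangular change of basis diagonalizes $G_n^B$, because states whose non-trivial arcs wind in the two opposite senses around the core pair to a power of $z$ and no triangular order separates them. Already for $n=1$ the matrix is $\left(\begin{smallmatrix}d&z\\ z&d\end{smallmatrix}\right)$ and the off-diagonal $z$ cannot be killed by a determinant-one transformation; the best one can hope for in general is block-orthogonality with $2\times 2$ blocks of the shape $\left(\begin{smallmatrix}T_k&\pm z\\ \pm z&T_k\end{smallmatrix}\right)$ (up to monomial factors), whose determinants supply $T_k(d)^2-z^2$. Your sketch does not set up or justify this block structure. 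Second, the crux of Step 3 is asserted rather than proved, and as stated it is internally inconsistent: you first say the closed-up $f_k$-loop is homotopically trivial and contributes the second-kind quantity $\Delta_k$, and then that it is a trace around the core contributing the first-kind quantities $T_k(d)\pm z$. Which of these occurs, and why the Chebyshev polynomial of the \emph{first} kind appears at all, is precisely the content of the type $B$ theory; the ``induction via the Wenzl recursion'' is not carried out. Finally, you explicitly leave open the multiplicity count $\binom{2n}{n-i}$, which is not a routine variant of the type $A$ exponents $\alpha_i$ and would have to be derived from the block structure above. So the proposal is a plausible program in the spirit of Cai's proof and of the paper's Theorem \ref{GenA}, but its central computations are missing, whereas the paper's specialization--nullity--degree argument closes the same gap by entirely different means.
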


The idea of the proof is as follows. For a positive integer $n$, there are ${2n \choose n}$ annular Catalan states. 
The evaluation of the Gram matrix of type $B$ at $z = (-1)^{k-1}T_k(d)$ is closely related to the evaluation of the Gram matrix of type $A.$ A Hopf link is formed with one of its components, the annulus used in the Gram determinant of type $B$, and the other decorated with the $k^{th}$ Jones - Wenzl idempotent. For details, see \cite{Ch-P}. With this decoration, the number of annular Catalan states is equal to ${2n \choose n} - {2n \choose n-k}$ 
and thus, the nullity of the matrix is at least ${2n \choose n-k}$. Therefore, $D_n^{B}(d, z)$ is divisible by $(T_k(d)+(-1)^kz)^{2n\choose n-k}$. By a simple analysis      
of involution, which changes $z$ to $-z$, it follows that $D_n^{B}(d,z)$ is also divisible by $(T_k(d)-(-1)^kz)^{2n\choose n-k}$. Thus, the theorem      
holds up to a multiplicative factor. By analysing the term having the highest degree which is obtained from the diagonal of the matrix, it follows that this factor is equal to $1$. 

\begin{figure}[ht]
\centering
$$ \langle a_i, a_j \rangle = 
\vcenter{\hbox{
\begin{overpic}[unit=1mm, scale = 1.4]{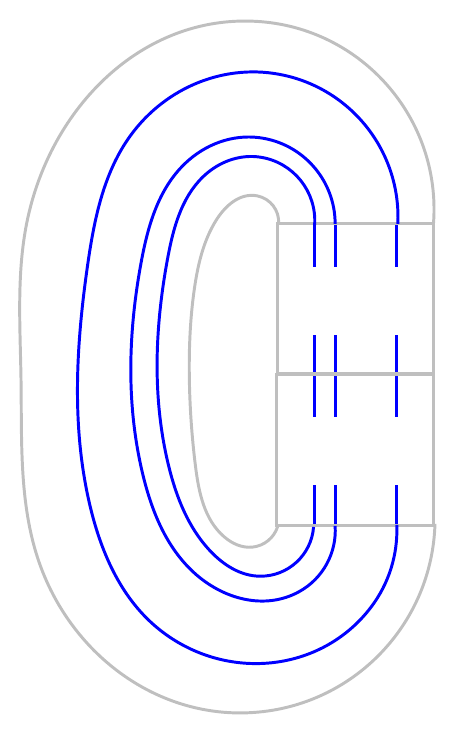}

\put(50,68){$\hdots$}
\put(50,54){$\hdots$}
\put(49,60){$a_i$}
\put(50,46){$\hdots$}
\put(49, 38){$\overline{a_j}$}
\put(50,32){$\hdots$}
%\put(3, -4){The Generalized type $A$ Bilinear Form}
\end{overpic} }}
$$
\caption{Depiction of the bilinear form $\langle a_i , a_j \rangle = Tr(a_i\overline{a_j})$ of generalized type $A$, where $a_i \in \mathcal{A}_n$ and $\overline{a_j}$ is the reflection of $a_j \in \mathcal{A}_n$ about the horizontal axis.}\label{GenerlizedBilinear}
%}} $$
\end{figure}

\section{The Gram Determinant of Generalized Type A}

We now introduce a generalized version of the Gram determinant of type $A$. 

\begin{definition}\

Let the disc $D^2$, with $2n$ marked points on its boundary, be considered as a rectangle with $n$ points on the top edge and $n$ points on the bottom edge. Define a bilinear form $\langle \ , \ \rangle$ in the following way: 
$$\langle \ , \ \rangle: \mathcal{S}_{2,\infty}(D^2 \times I, \{x_i\}_1^{2n}) \times \mathcal{S}_{2,\infty}(D^2 \times I, \{x_i\}_1^{2n}) \longrightarrow \mathbb{Z}[d, z].$$

 For $a_i, a_j \in \mathcal{A}_n$, glue $a_i$ with the reflection, about the horizontal axis, of $a_j$ which is denoted by $\bar a_j$, such that the bottom edge of $a_i$ is identified with the top edge of $\bar a_j$. Connect the marked points on the top edge of $a_i$ with those on the bottom edge of $\bar a_j$, in the annulus, respecting the ordering of the marked points (see Figure \ref{GenerlizedBilinear}). Recall that the described operation is the same as taking the trace of the image of the bilinear form.

The result is an annulus with two types of disjoint circles, homotopically trivial and non - trivial. Thus, we define, $\langle a_i , a_j\rangle = d^kz^m$ where $k$ and $m$ denote the number of these circles respectively.
We define the Gram matrix of generalized type $A$ as $G_n^{A^{gen}} = ( \langle a_i , a_j\rangle ) _{1 \leq i,j \leq c_n}$, and denote its determinant by  $D_n^{A^{gen}}$.

\end{definition}

\begin{remark}\

The Temperley - Lieb algebra leads to the Frobenius algebra by defining the Frobenius form to be the trace of the elements of the Temperley - Lieb algebra (see \cite{Koc}).

\end{remark}

\begin{theorem}\ \label{GenA}\

Let $R = \mathbb{Z}[A^{\pm 1}].$ Then,
the Gram determinant of generalized type $A$ is given by the following formula:
$$D^{A^{gen}}_n(d,z) = D^A_n(d) \prod\limits_{i=0} ^{\lfloor{\frac{n}{2}}\rfloor}\big(\frac{S_{n-2i}(z)}{\Delta_{n-2i}(d)}\big)^{\big( {n \choose i}-{n \choose i-1}\big)^2},$$ where $S_k(z)$ denotes the Chebyshev polynomial of the second kind, defined recursively by the equation $S_{k + 1}(z) = z\cdot S_k(z) - S_{k - 1}(z)$, with the initial conditions $S_0(z) = 1$ and $S_1(z) = z$.

\end{theorem}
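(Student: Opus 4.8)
The plan is to mimic the strategy used for the Gram determinant of type $B$ as sketched in the excerpt: find the multiplicity of each "eigenfactor" by decorating the central core of the annulus with a Jones--Wenzl idempotent, then pin down the overall multiplicative constant by a top-degree analysis. The key new feature is that the bilinear form here is the \emph{trace} $\langle a_i,a_j\rangle = \mathrm{Tr}(a_i\bar a_j)$, so the matrix $G_n^{A^{gen}}$ factors through the trace map on $TL_n(d)$ rather than through the usual inner product. Concretely, I would first decompose the identity of $TL_n(d)$ (over $\mathbb{Q}(A)$) into the orthogonal idempotents coming from the Jones--Wenzl projectors $f_k$ for $k \equiv n \pmod 2$, $0 \le k \le n$; this is the standard Wenzl/Jones decomposition and it block-diagonalizes the trace form. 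In the block labelled by $k$, the relevant multiplicity is $\big(\binom{n}{(n-k)/2} - \binom{n}{(n-k)/2 - 1}\big)^2$ — the square of the number of "half-diagrams" (through-strand tangles) with $n$ boundary points and $k$ through-strands — which matches the exponent $\big(\binom{n}{i}-\binom{n}{i-1}\big)^2$ in the statement upon setting $k = n-2i$.

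**Next I would** compute the scalar by which the trace form acts on the $k$-block. When one caps off a tangle through $f_k$ and takes the annular trace, the $k$ through-strands become $k$ parallel copies of the core curve $z$, decorated by $f_k$; evaluating $f_k$ around the annulus (i.e. the trace of $f_k$ in the annular skein module, expressed in the variable $z$) gives precisely the Chebyshev polynomial $S_k(z)$. Meanwhile the same tangle, when paired in the \emph{disc} (i.e. in the ordinary type-$A$ Gram form), contributes $\Delta_k(d)$ through the closed loop formed by $f_k$. Hence on the $k$-block the generalized form differs from the type-$A$ form by the scalar $S_k(z)/\Delta_k(d)$, raised to the dimension of that block. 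Taking the product over all admissible $k$ and multiplying by $D_n^A(d)$ (which accounts for the "disc part" of every block) yields the claimed formula, at least up to a unit in $\mathbb{Z}[d^{\pm},z^{\pm}]$. I would make the change-of-basis precise by adapting Cai's construction: build a basis of $\mathcal{A}_n$ adapted to the through-strand filtration, show the Gram matrix becomes block-triangular in this basis with diagonal blocks governed by the $f_k$-decorated traces, and read off the determinant as the product of the block determinants.

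**The remaining obstacle** is showing the multiplicative constant is exactly $1$, i.e. that no spurious power of $A$, $d$, or $z$ is lost in the idempotent change of basis (the $f_k$'s have denominators $(\{k\}_{A^4})!$, so one must check these cancel). For type $B$ this was handled by isolating the highest-degree monomial, which comes from the diagonal of the Gram matrix: along the diagonal $\langle a_i, a_i\rangle = d^{k_i} z^{m_i}$ where $m_i$ is the number of trivial annular circles and $k_i$ the number of core-parallel circles produced by doubling $a_i$, and the product over $i$ of the top term must agree with the top term of the proposed closed form. I would therefore (i) identify which diagonal entry (or which monomial of which diagonal entries) carries the top $z$-degree and top $d$-degree, (ii) verify by a direct count that $\sum_i k_i$ and $\sum_i m_i$ match the $z$- and $d$-degrees of $D_n^A(d)\prod_i (S_{n-2i}(z)/\Delta_{n-2i}(d))^{(\binom{n}{i}-\binom{n}{i-1})^2}$, and (iii) conclude the unit is trivial. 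An alternative, cleaner route for step (iii) is to specialize: setting $z = \Delta_k(d)$ (equivalently capping the core with $f_k$) should force the nullity to jump by exactly the block dimension, giving divisibility of $D_n^{A^{gen}}$ by $(S_{n-2i}(z) - \text{(something)})$ to the right power, and then a degree count in $z$ alone closes the argument. I expect the bookkeeping of the through-strand combinatorics — getting the exponent to be the \emph{square} $\big(\binom{n}{i}-\binom{n}{i-1}\big)^2$ rather than the first power as in types $A$ and $B$ — to be the subtle point, since it reflects the fact that the trace form sees both a "bra" half-diagram and a "ket" half-diagram independently.
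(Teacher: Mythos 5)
Your proposal is correct and follows essentially the same route as the paper: a Jones--Wenzl change of basis in the style of Cai (unipotent upper triangular, hence determinant-preserving), after which the form becomes (block-)diagonal with the $k$-through-strand block contributing the annular trace $S_k(z)$ in place of the $\Delta_k(d)$ of the disc case, and with the exponent $\big(\binom{n}{i}-\binom{n}{i-1}\big)^2$ arising as the square of the ballot number counting half-diagrams (which the paper obtains via the reflection principle on mountain paths). Your closing paragraph on pinning down the multiplicative constant by a top-degree or specialization argument is unnecessary once the change of basis is known to have determinant $1$, which is precisely how the paper sidesteps that issue.
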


Before giving the proof of this theorem in Section \ref{Main Proof}, we give an outline here.  To compute the Gram determinant $D_n^{A^{gen}}$, we change the basis of the Temperley - Lieb alegbra, $TL_n(d)$, so that in the new basis, the Gram matrix is a diagonal matrix. It follows from \cite{Cai} that the change of basis is given by an upper triangular matrix with $1$'s on the diagonal. Therefore, the Gram determinant is unchanged by the change of basis. To find the diagonal entries of the new Gram matrix, we use the standard properties of theta nets. The difference with the Gram determinant of generalized type $A$ is that is that we finally evaluate the theta nets in the annulus and not in the disc. Hence, we use the fact that the trace of the Jones - Wenzl idempotents, in the annulus, is the Chebyshev polynomial of the second kind in the variable $z$. Finally, we use the combinatorial properties of mountain paths to find a closed formula for the Gram determinant of generalized type $A$.

\subsection{Theta Nets}

This subsection serves as a preparation for the proof of the main theorem. 

\begin{definition}\ \label{def4}\ 

\iffalse
%%%%%%%%%%%%%%%%%%%%%%%%%%%%%%%%
\begin{figure}[ht]
\centering
\begin{subfigure}{.45\textwidth}
$$\vcenter{\hbox{
%\begin{figure}[ht]
\centering
\begin{overpic}[unit=1mm, scale = .5]{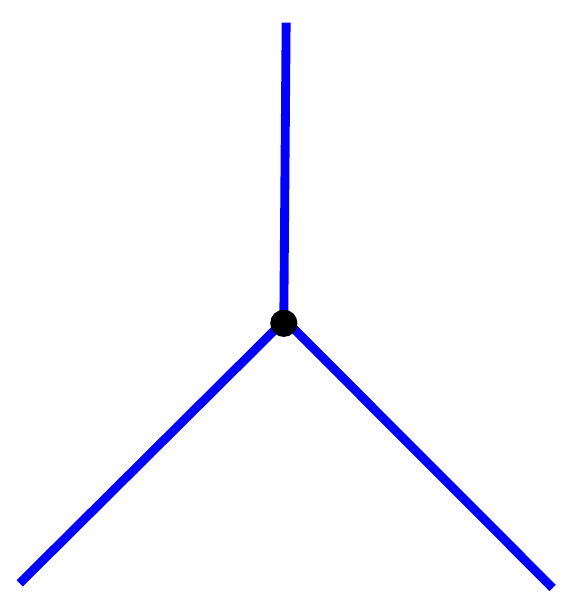}
\put(16,25){$c$}
\put(22,0){$b$}
\put(-3,0){$a$}
%\put(9, 16){$c$}
%\put(23, 21){$b$}
%\put(21.5, 7){$a$}
\end{overpic}
%\caption{Admissible Triple Point}
}} =  \vcenter{\hbox{
%\begin{figure}[ht]
\centering
\begin{overpic}[unit=1mm, scale = .5]{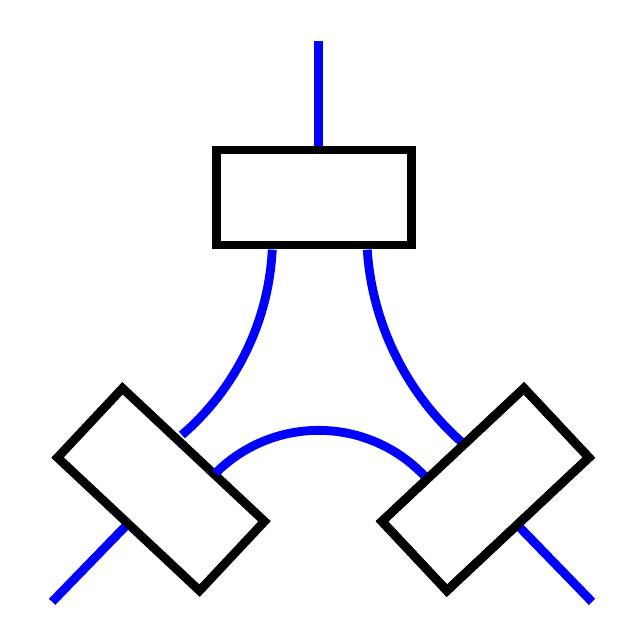}

\put(17,26){$c$}
\put(25,0){$b$}
\put(-1,0){$a$}
\put(21.5,13){$x$}
\put(9,13){$y$}
\put(15,6){$z$}
%\put(9, 16){$c$}
%\put(23, 21){$b$}
%\put(21.5, 7){$a$}
\end{overpic}
%\caption{Admissible Triple Point}
}} 
$$ 
\caption{Admissible Triple Point} \label{triple}
\end{subfigure}
\begin{subfigure}{.45\textwidth}
\centering
$$  \vcenter{\hbox{
%\begin{figure}[ht]

\centering
\begin{overpic}[unit=1mm, scale = .7]{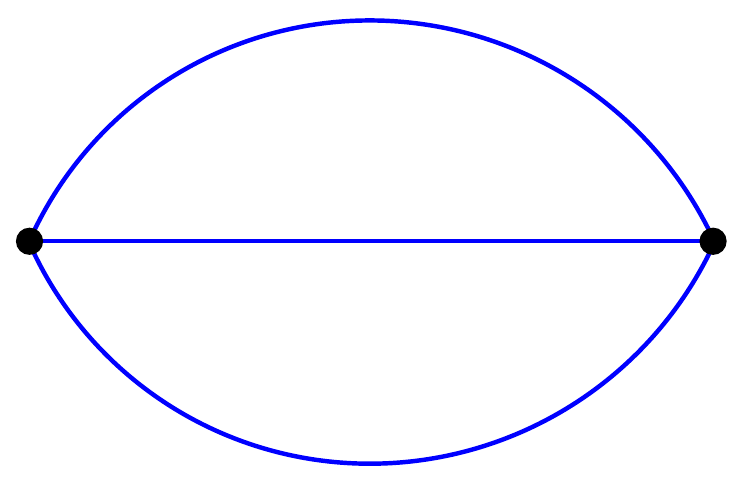}

\put(25,3){$c$}
\put(25,18.5){$b$}
\put(25,34.5){$a$}
%\put(9, 16){$c$}
%\put(23, 21){$b$}
%\put(21.5, 7){$a$}
\end{overpic}
%caption{Admissible Triple Point}
}} $$
\caption{Admissible Triple Point} \label{thetanet}
\end{subfigure}
%\caption{The generators of $TL_n(d).$}\label{Temp}
\end{figure}

%%%%%%%%%%%%%%%%%%%%%%%%%%%%%%%%%%%
\fi

Let $a = y +z$, $b = x + z$, and $c = x+ y$, that is, $x = \frac{(b + c - a)}{2}$, $y = \frac{(a + c - b)}{2}$, and $z = \frac{(a + b - c)}{2}$, so that
$$\vcenter{\hbox{
%\begin{figure}[ht]
\centering
\begin{overpic}[unit=1mm, scale = .5]{3vertex}
\put(16,25){$c$}
\put(22,0){$b$}
\put(-3,0){$a$}
%\put(9, 16){$c$}
%\put(23, 21){$b$}
%\put(21.5, 7){$a$}
\end{overpic}
%\caption{Admissible Triple Point}
}} =  \vcenter{\hbox{
%\begin{figure}[ht]
\centering
\begin{overpic}[unit=1mm, scale = .5]{3vertexfn}

\put(17,26){$c$}
\put(25,0){$b$}
\put(-1,0){$a$}
\put(21.5,13){$x$}
\put(9,13){$y$}
\put(15,6){$z$}
%\put(9, 16){$c$}
%\put(23, 21){$b$}
%\put(21.5, 7){$a$}
\end{overpic}
%\caption{Admissible Triple Point}
}}.
$$ %\vcenter{\hbox{\includegraphics[scale = .5]{Theda1.pdf}}} = \vcenter{\hbox{\includegraphics[scale = .5]{Theda2.pdf}}} $$
Notice that $(a+b-c)$, $(b+c-a)$, and $(a+c-b)$ are all even integers. The Temperley - Lieb category $\pmb{TL}$ consists of objects $n \in \mathbb{N},$ morphism spaces $TL_{m,n},$ and multiplication $TL_{m,n} \times TL_{l,m} \longrightarrow TL_{l,n}$ (see, for example, \cite{QW}). Then, the following figure depicts the Markov trace of the multiplication of          $TL_{c,a+b} \times TL_{a+b,c} \longrightarrow TL_{a+b,a+b}.$\footnote{Recall that trace in the disk is called the Markov trace.}

\end{definition} 

$$  Tr \left\langle \  
\vcenter{\hbox{
%\begin{figure}[ht]
\centering
\begin{overpic}[unit=1mm, scale = .3]{3vertex}
\put(11,15){$c$}
\put(12,0){$b$}
\put(-2,0){$a$}
%\put(9, 16){$c$}
%\put(23, 21){$b$}
%\put(21.5, 7){$a$}
\end{overpic} }}, \ \vcenter{\hbox{
%\begin{figure}[ht]
\centering
\begin{overpic}[unit=1mm, scale = .3]{3vertex}
\put(11,15){$c$}
\put(12,0){$b$}
\put(-2,0){$a$}
%\put(9, 16){$c$}
%\put(23, 21){$b$}
%\put(21.5, 7){$a$}
\end{overpic} }} 
\right\rangle = \vcenter{\hbox{
%\begin{figure}[ht]
\centering
\begin{overpic}[unit=1mm, scale = .7,angle = 90]{thetanet}

\put(3,24.5){$c$}
\put(18.5,24.5){$a$}
\put(29.5,24.25){$b$}
%\put(9, 16){$c$}
%\put(23, 21){$b$}
%\put(21.5, 7){$a$}
\end{overpic}
%\caption{Admissible Triple Point}
}}.$$
%\begin{figure}[!h]
%\centering
%\includegraphics[scale=.5]{Theta3.pdf}
%\caption{Decorated Theta-net}
%\end{figure}

Therefore, we obtain the following picture and can compute its unreduced Kauffman bracket using Theorem \ref{Iknow}.

\begin{equation*}
   \Theta(a,b,c) = \Gamma_{\mathbb{R}^2} (x,y,z) =
   \vcenter{\hbox{
%\begin{figure}[ht]
\ \\ \ \\
\centering
\begin{overpic}[unit=1mm, scale = .31]{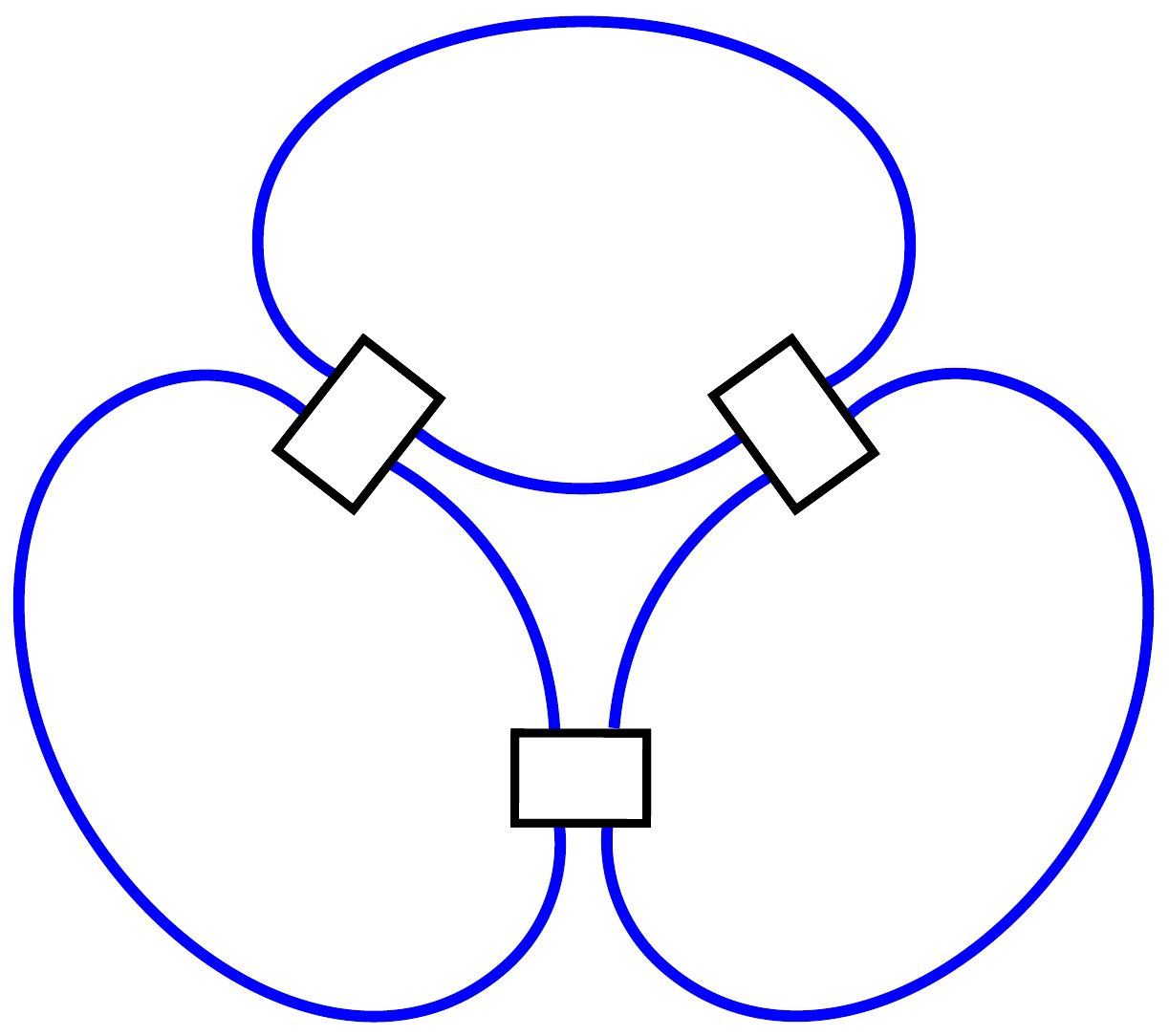}

\put(3,1){$y$}
\put(32.75,1){$z$}
\put(19,34.5){$x$}
\put(9, 15){$c$}
\put(23, 22.5){$b$}
\put(22.5, 7.5){$a$}
%\put(53,25){$\vdots$}
%\put(53,10){$a_{2n-1}$}
%\put(15, -3){Admissible Triple Point}
\end{overpic}
%\caption{Admissible Triple Point}
}}.
%\vcenter{\hbox{\includegraphics[scale=.3]{Net6.pdf}}}
\end{equation*}

\begin{theorem} [\cite{MV,KL,Lic}]\label{Iknow}\ 

Let $\Delta_n !$ denote the product $\Delta_n \Delta_{n-1} \cdots \Delta_1.$
Then

$$ \Gamma_{\mathbb{R}^2} (x,y,z) = \frac{ \Delta_{x+y+z}! \Delta_{x-1}! \Delta_{y-1}! \Delta_{z-1}!}{\Delta_{x+y-1}! \Delta_{x+z-1}! \Delta_{y+z-1}!}. $$
\end{theorem}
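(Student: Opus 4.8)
The plan is to prove the formula by induction on $n = x+y+z = \tfrac12(a+b+c)$, with Wenzl's recursion (Figure~\ref{wenzl}) as the engine. Both $\Gamma_{\mathbb{R}^2}(x,y,z)$ and the claimed right-hand side are symmetric under permutations of $(x,y,z)$, so at each step one is free to apply the recursion to whichever of the three idempotents $f_a, f_b, f_c$ decorating the edges of the theta net is most convenient, and the base of the induction will consist of the degenerate triples in which one of $x,y,z$ vanishes.

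For the base case, say $z = 0$, so that $a+b=c$: at each trivalent vertex the $a$- and $b$-strands together make up the $c$-strands, with no strand running directly between edges $a$ and $b$. Since $f_a \otimes f_b - \mathbb{1}$ lies in the subalgebra generated by those $e_i$ with $i \ne a$ (all of which satisfy $1 \le i \le c-1$), and $f_c e_i = 0$ for $1 \le i \le c-1$, one gets $f_c(f_a \otimes f_b) = f_c$; hence the $a$- and $b$-labels, and likewise the idempotents on those two edges, are absorbed, the remaining copies of $f_c$ collapse via $f_c f_c = f_c$, and the whole diagram reduces to a single $f_c$ closed into a loop, of value $\Delta_c = \Delta_{x+y}$. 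On the other side, setting $z=0$ in the claimed formula and using the convention $\Delta_{-1}! = 1$ gives $\Delta_{x+y}!\,\Delta_{-1}!\,/\,\Delta_{x+y-1}! = \Delta_{x+y}$, so the two agree; the cases $x=0$ and $y=0$ are symmetric.

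For the inductive step, assume $x,y,z \ge 1$. Apply Wenzl's recursion to one idempotent, say $f_a$, peeling off its outermost strand: this writes $f_a$ as $f_{a-1}$ (on the inner $a-1$ strands) minus $\tfrac{\Delta_{a-2}}{\Delta_{a-1}}$ times the turnback $f_{a-1}e_{a-1}f_{a-1}$. Neither resulting diagram is at once a smaller theta net---the first carries a stray strand running alongside an adjacent edge and the second has an $e_{a-1}$ pressed against a trivalent vertex---but both simplify: the stray strand is absorbed into the neighbouring trivalent structure after a planar isotopy and a further use of $f_n e_i = 0$, and the turnback collapses the adjacent vertex while freeing a null-homotopic loop worth $d$. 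What emerges is a $\mathbb{Q}(A)$-linear relation expressing $\Gamma_{\mathbb{R}^2}(x,y,z)$ in terms of theta nets with strictly smaller $x+y+z$, all covered by the inductive hypothesis; substituting the closed formula for those and simplifying with the Chebyshev recursion $\Delta_{m+1} = d\,\Delta_m - \Delta_{m-1}$, $\Delta_0 = 1$, reproduces the claimed expression. An essentially equivalent but tidier route is to prove the single multiplicative step
$$\Gamma_{\mathbb{R}^2}(x,y,z) = \frac{\Delta_{x+y+z}\,\Delta_{z-1}}{\Delta_{x+z-1}\,\Delta_{y+z-1}}\;\Gamma_{\mathbb{R}^2}(x,y,z-1)$$
via a bubble-removal move that strips one of the $z$ strands running between edges $a$ and $b$, the value of the resulting one-strand bubble being computed directly from Wenzl's recursion, and then to induct on $z$ with base case $z=0$.

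The step I expect to be the main obstacle is arranging the reduction so that the recursion genuinely closes up within the class of theta nets rather than spilling into more complicated trivalent networks, and pinning down exactly which smaller theta net and which ratio of $\Delta$'s appears in each term. The technical core is an auxiliary bubble-collapse lemma---two trivalent vertices joined along two of their edges equal $\tfrac{\Theta(a,b,c)}{\Delta_c}$ times a single edge, proved by the same absorption argument as in the base case---together with careful bookkeeping of parities and of the loop factors $d$ that appear when turnbacks are removed. Once the recursion (in either form) is in hand and verified against $\Delta_{m+1} = d\,\Delta_m - \Delta_{m-1}$, the induction closes and the formula follows.
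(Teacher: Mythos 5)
First, a point of reference: the paper does not prove Theorem \ref{Iknow}; it is quoted from \cite{MV,KL,Lic} and used as a black box, so there is no in-paper argument to measure yours against. Taken on its own terms, your outline is the standard one from those sources, and much of it checks out. The base case $z=0$ is argued completely and correctly: with $c=a+b$ the net is the Markov trace of $f_c(f_a\otimes f_b)=f_c$, giving $\Delta_{x+y}$, which matches the right-hand side under the convention $\Delta_{-1}!=1$. The multiplicative step
$$\Gamma_{\mathbb{R}^2}(x,y,z)=\frac{\Delta_{x+y+z}\,\Delta_{z-1}}{\Delta_{x+z-1}\,\Delta_{y+z-1}}\,\Gamma_{\mathbb{R}^2}(x,y,z-1)$$
is exactly the ratio of consecutive values of the closed formula, so it is the correct single lemma to which the theorem reduces, and the auxiliary bubble-collapse lemma you invoke is the paper's Corollary \ref{theta}.

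The gap is that this lemma --- which is the entire content of the theorem --- is named but not proved, and both sketches you give of how it would go have real problems. In the first, applying Wenzl's recursion (Figure \ref{wenzl}) to $f_a$ does not keep you inside the class of theta nets: in the term $f_{a-1}\otimes 1$ the peeled-off strand runs parallel to $f_{a-1}$ between the two trivalent vertices and then threads through $f_b$ (or $f_c$), producing a closure of the remaining idempotents that is not a theta net, and there is no longer an $f_a$ present to absorb it; and the claim that the turnback term ``frees a null-homotopic loop worth $d$'' is not justified --- depending on whether the cap of $e_{a-1}$ lands inside a single strand group at the vertex (where it would instead annihilate against the neighbouring idempotent) or straddles two groups, neither case visibly yields $d$ times a smaller theta net. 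In the second route, the coefficient $\Delta_{x+y+z}\Delta_{z-1}/(\Delta_{x+z-1}\Delta_{y+z-1})$ depends on the global quantity $x+y+z$, so it cannot be the output of a purely local ``one-strand bubble'' move performed near the $z$-block; the derivations in the literature use either the symmetric form of the Jones--Wenzl recursion (a sum over turnback positions) or an explicit state-sum evaluation. You correctly flag this reduction as the main obstacle, but as written the proof stops exactly where the work begins; everything surrounding it (symmetry, base case, the consistency check against $\Delta_{m+1}=d\Delta_m-\Delta_{m-1}$) is fine.
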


In particular, for $b = x+z = 1$, that is, $|a-c|=1$, we get $\Gamma_{\mathbb{R}^2}(x,y,z) = \Delta_{y+1}$.

\begin{corollary} \label{theta} \label{theta network}\

The following formula reduces digons in the net (called bubble reduction),
$$\vcenter{\hbox{
%\begin{figure}[ht]
\ \\ \ \\
\centering
\begin{overpic}[unit=1mm, scale = .7]{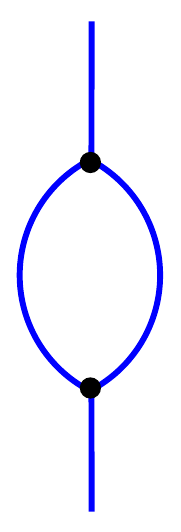}

%\put(25,3){$a$}
\put(9,4){$a$}
\put(2.5, 17){$b$}
\put(12, 17){$c$}
\put(9,32){$a'$}
%\put(23, 21){$b$}
%\put(21.5, 7){$a$}
\end{overpic}
%\caption{Admissible Triple Point}
}} = \frac{\delta_{a,a'} \Theta(a,b,c)}{\Delta_a} \vcenter{\hbox{
%\begin{figure}[ht]
\ \\ \ \\
\centering
\begin{overpic}[unit=1mm, scale = .7]{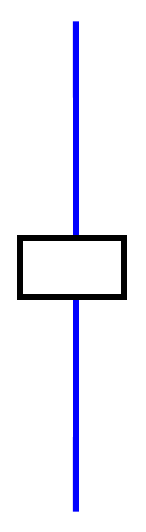}

%\put(25,3){$a$}
\put(9,4){$a$}
%\put(23, 21){$b$}
%\put(21.5, 7){$a$}
\end{overpic}
%\caption{Admissible Triple Point}
}},$$

where $\delta_{a,a'}$ is the Kronecker delta. Compare with \cite{KL, Lic}. 

\end{corollary}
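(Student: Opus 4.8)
The plan is to prove Corollary~\ref{theta} by reducing the bubble to a theta net and applying Theorem~\ref{Iknow}. First I would observe that the left-hand picture is an element of $TL_{a,a'}$: it has $a$ strands entering at the bottom through a Jones--Wenzl idempotent $f_a$, these fuse into a triple point with labels $b$ and $c$, the $b$- and $c$-cables form a digon, and then they fuse back at a second triple point into $a'$ strands which exit through $f_{a'}$. The Temperley--Lieb category has $\mathrm{Hom}(a,a') = TL_{a,a'}$ which is one-dimensional when $a = a'$ (spanned by $f_a$) and zero-dimensional when $a \neq a'$; this is the standard Schur's-lemma-type statement for the Jones--Wenzl idempotents. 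This immediately forces the Kronecker delta $\delta_{a,a'}$ and tells us the left-hand side equals $\lambda \cdot f_a$ for some scalar $\lambda \in \mathbb{Q}(A)$ when $a = a'$.

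Next I would pin down the scalar $\lambda$ by closing up both sides, i.e.\ taking the Markov trace. Closing the right-hand side $\lambda f_a$ in the disc gives $\lambda \Delta_a$, since the closure of $f_a$ is by definition $\Delta_a$ (the Chebyshev polynomial of the second kind, as recalled after Theorem~\ref{Iknow}). Closing the left-hand side glues the top $a'=a$ strands around to the bottom $a$ strands, which turns the open digon-with-two-triple-points diagram into exactly the closed theta net $\Theta(a,b,c) = \Gamma_{\mathbb{R}^2}(x,y,z)$ with $x = (b+c-a)/2$, $y = (a+c-b)/2$, $z = (a+b-c)/2$, whose value is given by Theorem~\ref{Iknow}. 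Equating the two closures yields $\lambda \Delta_a = \Theta(a,b,c)$, hence $\lambda = \Theta(a,b,c)/\Delta_a$, which is the claimed formula.

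To make the trace argument rigorous one should check that taking the Markov trace is faithful enough to detect the scalar, i.e.\ that $\Delta_a \neq 0$ in the relevant coefficient ring so division is legitimate; over $\mathbb{Q}(A)$ this is automatic since $\Delta_a$ is a nonzero polynomial in $d = -A^2 - A^{-2}$. One must also verify the admissibility of the triple points, namely that $(a,b,c)$ is an admissible triple so that $x,y,z$ are nonnegative integers and the theta net is defined and nonzero generically; this is exactly the condition that $a+b-c$, $b+c-a$, $a+c-b$ are even and nonnegative, which I would assume is part of the standing hypothesis on the labels in the net (as set up in Definition~\ref{def4}).

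The main obstacle is the one-dimensionality claim $\dim TL_{a,a} = 1$ modulo the idempotents — more precisely, that any morphism $a \to a$ factoring through the pattern $b \otimes c$ with $b,c$ sandwiched between $f_a$'s must be a multiple of $f_a$ rather than a general element of the $c_a$-dimensional algebra $TL_a$. This is where the defining property $f_a e_i = 0 = e_i f_a$ from the cited theorem does the work: since $f_a - \mathbb{1}$ lies in the algebra generated by the $e_i$, multiplying the bubble diagram on both sides by $f_a$ and using that the diagram already has $f_a$ at top and bottom collapses the $(c_a - 1)$-dimensional ``non-identity'' part, leaving only the coefficient of $\mathbb{1}$, i.e.\ the scalar $\lambda$. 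I would spell this out carefully, as it is the conceptual heart of the bubble reduction and the only step that is not a direct invocation of Theorem~\ref{Iknow}.
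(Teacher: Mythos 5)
Your proposal is correct and follows essentially the same route as the paper: first argue that the sandwiched diagram vanishes for $a \neq a'$ and is a scalar multiple $\lambda f_a$ for $a = a'$ by the defining properties of the Jones--Wenzl idempotent, then determine $\lambda$ by closing both sides in the plane and comparing $\Theta(a,b,c)$ with $\lambda \Delta_a$. Your additional remarks on the one-dimensionality of the sandwiched morphism space and on $\Delta_a \neq 0$ over $\mathbb{Q}(A)$ only flesh out steps the paper leaves implicit.
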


\begin{proof}\

If $a \neq a'$ then without loss of generality we can assume that $a' > a$. Therefore, the corresponding diagram has a return on the part with $a'$ in it. So the value of $a$ associated to the diagram on the left is equal to $0$. Otherwise $a = a'$, and by the property of Jones - Wenzl idempotents, we get \\

$$\vcenter{\hbox{
%\begin{figure}[ht]
\centering
\begin{overpic}[unit=1mm, scale = .7]{opennet}

%\put(25,3){$a$}
\put(9,4){$a$}
\put(2.5, 17){$b$}
\put(12, 17){$c$}
\put(9,32){$a'$}
%\put(23, 21){$b$}
%\put(21.5, 7){$a$}
\end{overpic}
%\caption{Admissible Triple Point}
}} = \lambda  \vcenter{\hbox{
%\begin{figure}[ht]
\ \\ \ \\
\centering
\begin{overpic}[unit=1mm, scale = .7]{fnopen}

%\put(25,3){$a$}
\put(9,4){$a$}
%\put(23, 21){$b$}
%\put(21.5, 7){$a$}
\end{overpic}
%\caption{Admissible Triple Point}
}}.$$
After closing the diagram in $\mathbb{R}^2$, the left side is a decorated theta curve with evaluation $\Theta (a,b,c)$, while the right side gives us $\lambda \Delta_a$. Thus, $\lambda = \frac{\Theta (a,b,c)}{\Delta_a}.$

\end{proof}

\begin{corollary}\label{Annulus}\

The formula for the decorated theta curve in $\mathbb{R}^2$ can be naturally generalized to the decorated theta curve in the annulus. That is,  $$\Gamma_{Ann}(x,y,z)=\frac{S_{x+y}(x)}{\Delta_{x+y}}\Gamma_{\mathbb{R}^2}(x,y,z),$$ 
	where we start from a pair of pants with holes $\partial_x,\partial_y$ and $\partial_z$. Then, we cap off $\partial_z,$ so that, $\partial_x$ is homotopic to $\partial_y$.

\end{corollary}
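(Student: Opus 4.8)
The plan is to reduce the computation in the annulus to the computation in $\mathbb{R}^2$ by splitting the decorated theta curve along the curve $\partial_z$ where it is capped off. First I would set up the picture carefully: the decorated theta curve sits inside a pair of pants with boundary components $\partial_x$, $\partial_y$, $\partial_z$, with the three edges of the theta labelled so that the edge carrying $x$ parallels $\partial_x$, the edge carrying $y$ parallels $\partial_y$, and the edge carrying $z$ (together with the two trivalent vertices) is the ``waist'' separating the two legs. Capping off $\partial_z$ turns the pair of pants into an annulus whose core curve is homotopic to both $\partial_x$ and $\partial_y$; the decorated theta curve becomes $\Gamma_{Ann}(x,y,z)$.

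The key step is a bubble-type reduction in the annulus applied along the essential curve. After capping $\partial_z$, the portion of the diagram lying in the capped-off disc consists of the two trivalent vertices joined by the strand labelled $z$, together with the $x$- and $y$-strands entering from the annular part; this is precisely the ``open net'' configuration of Corollary \ref{theta} but with the closure taken in the annulus rather than in $\mathbb{R}^2$. By the same Jones--Wenzl argument as in Corollary \ref{theta}, this local tangle is a scalar multiple $\lambda$ of the single strand labelled $x$ (equivalently $y$, since $|x-?|$ forces $x=y$ is \emph{not} what happens here --- rather the two outer labels agree because the cap forces them to, so write both as the common value and keep the edge label $x$ on the surviving essential strand). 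The surviving diagram is then the Jones--Wenzl idempotent $f_{x}$ --- no, more precisely $f_{x+y}$ after recombining, but the cleanest bookkeeping is: the capped disc contributes the local scalar, and what remains running around the annulus is a single Jones--Wenzl box whose annular closure is, by the stated fact that the trace of $f_k$ in the annulus is the Chebyshev polynomial $S_k(z)$, equal to $S_{x+y}(z)$. To pin down $\lambda$, I would close the \emph{entire} diagram off in $\mathbb{R}^2$ instead: on one side this recovers $\Gamma_{\mathbb{R}^2}(x,y,z)$, and on the other side it gives $\lambda$ times the $\mathbb{R}^2$-closure of $f_{x+y}$, which is $\lambda \Delta_{x+y}$. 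Hence $\lambda = \Gamma_{\mathbb{R}^2}(x,y,z)/\Delta_{x+y}$, and replacing the $\mathbb{R}^2$-closure of the surviving strand by its annular closure multiplies by $S_{x+y}(z)/\Delta_{x+y}$ relative to nothing --- rather, the annular evaluation is $\lambda \cdot S_{x+y}(z)$, giving
$$\Gamma_{Ann}(x,y,z) = \frac{\Gamma_{\mathbb{R}^2}(x,y,z)}{\Delta_{x+y}} \, S_{x+y}(z) = \frac{S_{x+y}(z)}{\Delta_{x+y}}\,\Gamma_{\mathbb{R}^2}(x,y,z),$$
as claimed. (The statement as printed writes $S_{x+y}(x)$, which I read as a typo for $S_{x+y}(z)$, the variable $z$ being the one recording the essential annular curve.)

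The main obstacle I expect is not any deep computation but the careful diagrammatic bookkeeping of \emph{which} strands survive the capping and \emph{where} the annular closure is taken: one must check that after capping $\partial_z$ the two edges formerly labelled $x$ and $y$ genuinely fuse into a single admissible strand running around the annulus (this uses the admissibility relations $a=y+z$, $b=x+z$, $c=x+y$ from Definition \ref{def4}, forcing the relevant internal labels to match), and that the leftover scalar is exactly the $\mathbb{R}^2$-theta over $\Delta_{x+y}$ rather than over some other $\Delta$. Once the local reduction of Corollary \ref{theta} is invoked in the annular setting and the Jones--Wenzl trace identity $\mathrm{tr}_{Ann}(f_k)=S_k(z)$ is substituted, the formula drops out with no further work.
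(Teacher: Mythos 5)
Your proposal is correct and follows essentially the same route as the paper: cap off $\partial_z$, apply the bubble reduction of Corollary \ref{theta} to the resulting digon to extract the scalar $\Theta(a,b,c)/\Delta_{x+y}=\Gamma_{\mathbb{R}^2}(x,y,z)/\Delta_{x+y}$, and then evaluate the surviving $f_{x+y}$ running around the core of the annulus as the Chebyshev polynomial $S_{x+y}$ of the annular variable. You also correctly read $S_{x+y}(x)$ in the statement as $S_{x+y}$ of the variable recording the essential curve, which is exactly what the paper's proof does.
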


\begin{proof}\

Using Corollary \ref{theta network}, we get
$$\vcenter{\hbox{
%\begin{figure}[ht]
\ \\
\centering
\begin{overpic}[unit=1mm, scale = .4]{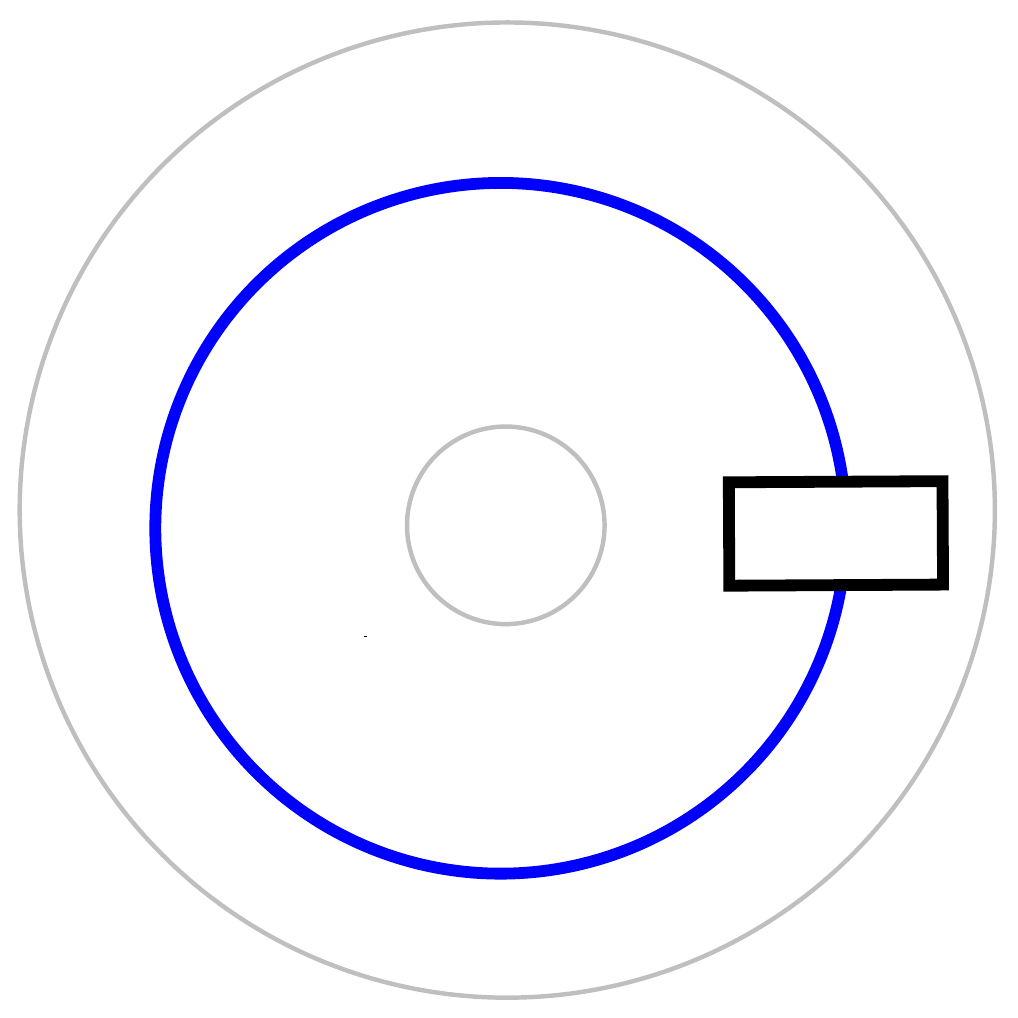}

%\put(25,3){$a$}
\put(16,36){$x+y$}
%\put(2.5, 17){$b$}
%\put(12, 17){$c$}
%\put(9,32){$a'$}
%\put(23, 21){$b$}
%\put(21.5, 7){$a$}
\end{overpic}
%\caption{Admissible Triple Point}
}}$$

which in the Kauffman bracket skein module, $S_{2, \infty}(A^2 \times I)$, is equal to the Chebyshev polynomial $ S_{x + y}(\alpha)\frac{\Theta (a,b,c)}{\Delta_{x+y}}$, where $a = y+z$, $b = x+z$, and $c = x+y$.  
\end{proof}

\subsection{A Change of Basis for the Temperley - Lieb Algebra}

Following \cite{Cai}, we make a change of basis so that our bilinear form is orthogonal. Hence, in the new basis, the Gram matrix is diagonal. 

\begin{definition}\label{basis}\

Consider the finite sequence $\{a_1,a_2,\ldots,a_{2n-1}\}$ of natural numbers which satisfies the following two conditions:

\begin{enumerate} 

\item $a_1 = a_{2n-1} = 1$ and 
\item $| a_i - a_{i-1}| = 1$ $\forall i$.

\end{enumerate}
The elements of $TL_n(d)$, denoted by  $D_{a_1,a_2,\ldots,a_{2n-1}}$, are depicted in Figure \ref{Admissibletriplepoint}. The triple points in the diagram are admissible. 

\end{definition}

%$$\vcenter{\hbox{
\begin{figure}[ht]
\ \\ \ \\
\centering
\begin{overpic}[unit=1mm, scale = 1.5]{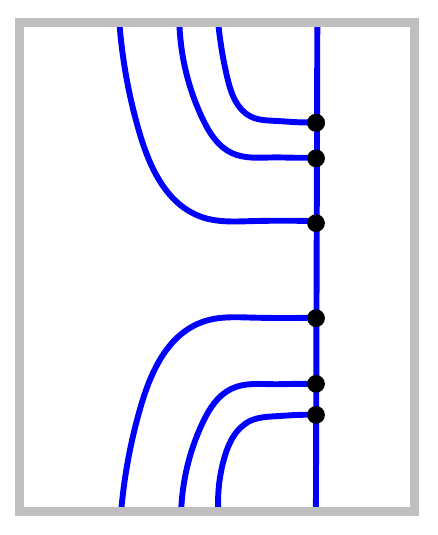}

\put(31, 70){1}
\put(25, 70){1}
\put(16, 70){1}
\put(50,70){$a_1$}
\put(50,59){$a_2$}
\put(50,51){$\vdots$}
\put(50,40){$a_n$}
\put(50,26){$\vdots$}
\put(50,20){$a_{2n-2}$}
\put(50,10){$a_{2n-1}$}
\put(31, 10){1}
\put(25, 10){1}
\put(16, 10){1}
%\put(15, -3){Admissible Triple Point}
\end{overpic}
\caption{The depiction of $D_{a_1,a_2,\ldots,a_{2n-1}} \in TL_n(d)$ using $a_1, \ldots, a_{2n-1}$ (Definition \ref{basis}).}\label{Admissibletriplepoint}
%}} $$
\end{figure}

\begin{example}\

Pictured below is the basis for $TL_3(d)$, decorated with Jones - Wenzl idempotents. 

$$ \vcenter{\hbox{
%\begin{figure}[ht]
\ \\ \ \\
\centering
\begin{overpic}[unit=1mm, scale = .7]{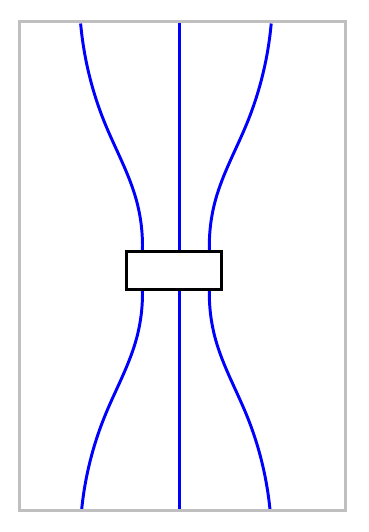}

%\put(25,3){$a$}
\put(6,-2){$D_{1,2,3,2,1}$}
%\put(23, 21){$b$}
%\put(21.5, 7){$a$}
\end{overpic}
%\caption{Admissible Triple Point}
}} \ 
\vcenter{\hbox{
%\begin{figure}[ht]
\ \\ \ \\
\centering
\begin{overpic}[unit=1mm, scale = .7]{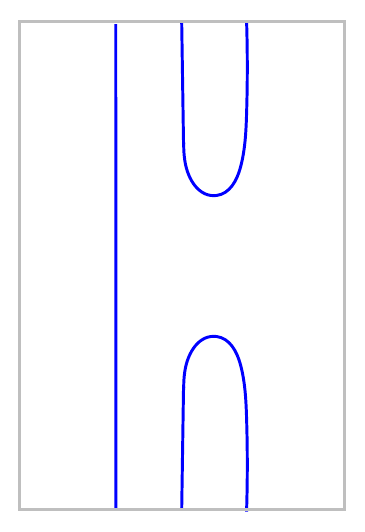}

%\put(25,3){$a$}
\put(6,-2){$D_{1,0,1,0,1}$}
%\put(23, 21){$b$}
%\put(21.5, 7){$a$}
\end{overpic}
%\caption{Admissible Triple Point}
}} \ 
\vcenter{\hbox{
%\begin{figure}[ht]
\ \\ \ \\
\centering
\begin{overpic}[unit=1mm, scale = .7]{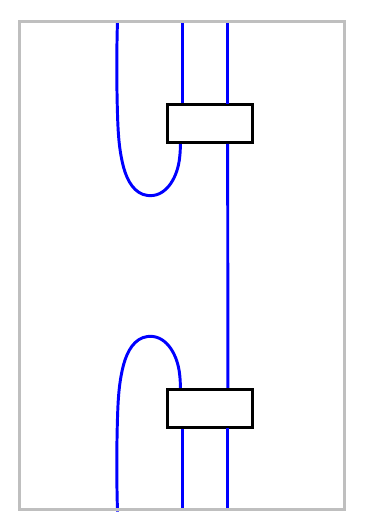}

%\put(25,3){$a$}
\put(6,-2){$D_{1,2,1,2,1}$}
%\put(23, 21){$b$}
%\put(21.5, 7){$a$}
\end{overpic}
%\caption{Admissible Triple Point}
}} \ 
\vcenter{\hbox{
%\begin{figure}[ht]
\ \\ \ \\
\centering
\begin{overpic}[unit=1mm, scale = .7]{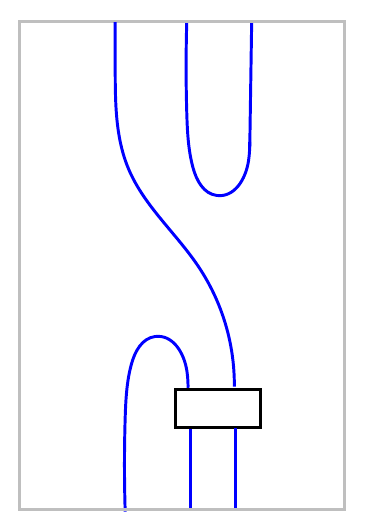}

%\put(25,3){$a$}
\put(6,-2){$D_{1,0,1,2,1}$}
%\put(23, 21){$b$}
%\put(21.5, 7){$a$}
\end{overpic}
%\caption{Admissible Triple Point}
}} \ 
\vcenter{\hbox{
%\begin{figure}[ht]
\ \\ \ \\
\centering
\begin{overpic}[unit=1mm, scale = .7]{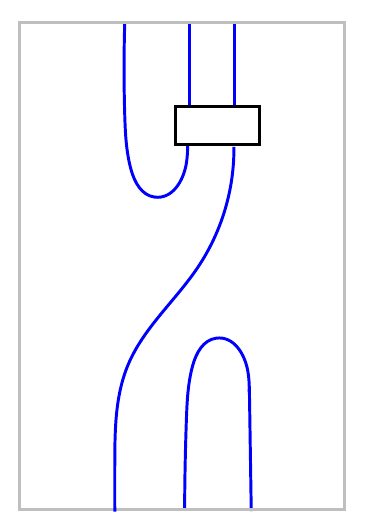}

%\put(25,3){$a$}
\put(6,-2){$D_{1,2,1,0,1}$}
%\put(23, 21){$b$}
%\put(21.5, 7){$a$}
\end{overpic}
%\caption{Admissible Triple Point}
}}$$
\\

\end{example}

In order to compute the Gram determinant of generalized type $A$, we will need the following results.

\begin{theorem}\cite{Cai}\label{CBM}\

Let $\mathcal{D}_n$ be the collection of all elements $D_{a_1,a_2,\ldots,a_{2n-1}}$. Then, $\mathcal{D}_n$ is a basis for $TL_n(d)$. The change of basis matrix between $\mathcal{A}_n$ and $\mathcal{D}_n$ is an upper triangular matrix with $1$'s on the diagonal. In particular, the matrix has determinant 1.
\end{theorem}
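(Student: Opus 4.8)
The plan is to exhibit an explicit ordering of $\mathcal{A}_n$ under which the matrix expressing each $D_{a_1,\dots,a_{2n-1}}$ in the diagram basis is upper triangular with $1$'s on the diagonal. Since $\mathcal{A}_n$ has $c_n$ elements and the admissible sequences of Definition \ref{basis} (nonnegative $\pm1$ lattice paths from $1$ to $1$ of length $2n-2$) are also counted by $c_n$, the resulting matrix is square; exhibiting it as unitriangular forces invertibility, and hence proves simultaneously that the matrix has determinant $1$ and that $\mathcal{D}_n$ is a basis. So both assertions of the theorem fall out of a single triangularity statement.

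First I would define a bijection $\psi$ from admissible sequences to $\mathcal{A}_n$ by extracting \emph{leading terms}. Every $D_{a_1,\dots,a_{2n-1}}$ of Figure \ref{Admissibletriplepoint} is assembled from trivalent vertices and Jones–Wenzl boxes; at the $i$-th vertex the running cable labeled $a_i$ is fused with one strand to produce $a_{i+1}$, and (with $x,y,z$ as in Definition \ref{def4}) an up-step $a_{i+1}=a_i+1$ gives $(x,y,z)=(1,a_i,0)$, so the new strand propagates through, while a down-step $a_{i+1}=a_i-1$ gives $(x,y,z)=(0,a_i-1,1)$, so the new strand turns back and creates a cap. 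Replacing every idempotent $f_k$ by the identity $\mathbb{1}_k$ therefore collapses $D_{a_1,\dots,a_{2n-1}}$ to a single crossingless matching $\psi(a_1,\dots,a_{2n-1})\in\mathcal{A}_n$ whose turnbacks are dictated by the down-steps of the sequence. This is the classical correspondence between lattice paths and crossingless matchings, and I would verify it is a bijection (injectivity suffices, given the matching cardinalities).

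Next I would expand $D_{a_1,\dots,a_{2n-1}}$ in the basis $\mathcal{A}_n$ by applying the Wenzl recursion of Figure \ref{wenzl} to every idempotent. Each application replaces $f_{k+1}$ by an identity term plus a correction term carrying the factor $-\tfrac{\Delta_{k-1}}{\Delta_k}$ together with an extra cup–cap pair, i.e.\ an $e_i$; since $e_i$ strictly lowers the number of propagating (top-to-bottom) strands, every correction term is a crossingless matching with strictly fewer through-strands than $\psi(a_1,\dots,a_{2n-1})$. Ordering $\mathcal{A}_n$ by decreasing number of through-strands, refined to a total order $\prec$ by a compatible linear extension built from the mountain-path statistic, one obtains
\begin{equation*}
D_{a_1,\dots,a_{2n-1}} \;=\; \psi(a_1,\dots,a_{2n-1}) \;+\; \sum_{b \,\prec\, \psi(a_1,\dots,a_{2n-1})} c_b\, b,
\end{equation*}
where the leading diagram occurs exactly once, from choosing the identity term at every step of every recursion, and hence with coefficient $1$. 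Ordering rows and columns compatibly with $\prec$ makes the change-of-basis matrix upper triangular with $1$'s on the diagonal, so its determinant is $1$ and $\mathcal{D}_n$ is a basis.

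The main obstacle is packaging the two combinatorial facts cleanly: that $\psi$ is genuinely a bijection (no two admissible sequences yield the same leading matching), and that the leading matching \emph{strictly} dominates \emph{all} correction terms simultaneously across the many vertices, so that the triangularity is honest rather than merely block-triangular. The delicate point is that the through-strand count alone gives only a partial order; I expect the mountain-path encoding flagged in the proof outline to be exactly what promotes it to a total order in which $\psi(a_1,\dots,a_{2n-1})$ is the unique maximum of the support of $D_{a_1,\dots,a_{2n-1}}$, and checking that correction terms arising from distinct vertices never conspire to recreate the leading diagram is the step that will require the most care.
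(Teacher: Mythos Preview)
The paper does not prove this theorem; it is quoted from \cite{Cai} and used as a black box, so there is no in-paper argument to compare your proposal against. Your outline---take the leading diagram of $D_{a_1,\dots,a_{2n-1}}$ by replacing every $f_k$ with $\mathbb{1}_k$, then argue that the Wenzl correction terms are strictly lower in a suitable refinement of the through-strand order---is the standard route and is essentially Cai's argument.

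One remark on the obstacle you flag: rather than refining the through-strand preorder by an ad hoc linear extension, it is cleaner to order $\mathcal{A}_n$ by the sequences $(a_1,\dots,a_{2n-1})$ themselves via your bijection $\psi$, using pointwise comparison (one mountain path dominates another if it lies weakly above it at every step). A single Wenzl correction at the $i$-th vertex replaces $f_{a_i}$ by a term with an extra turnback, which strictly lowers $a_i$ while leaving the other $a_j$ unchanged or lower; iterating, every non-leading term has a profile strictly below $(a_1,\dots,a_{2n-1})$ in this partial order. This makes the ``no conspiracy'' worry disappear, since the leading profile is the unique maximum of the support by construction, and any linear extension of the domination order then gives honest unitriangularity.
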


\begin{theorem}\label{Orthogonal}\

The elements $D_{a_1,a_2,\ldots,a_{2n-1}}$ are orthogonal with respect to the bilinear form of generalized type $A$.

\end{theorem}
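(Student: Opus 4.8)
The plan is to show that for two distinct basis elements $D_{\mathbf a} = D_{a_1,\dots,a_{2n-1}}$ and $D_{\mathbf b} = D_{b_1,\dots,b_{2n-1}}$, the trace evaluation $\langle D_{\mathbf a}, D_{\mathbf b}\rangle$ in the annulus vanishes unless $\mathbf a = \mathbf b$. First I would set up the gluing: stacking $D_{\mathbf a}$ on top of the reflection $\overline{D_{\mathbf b}}$ and closing up in the annulus produces a trivalent net drawn on the annulus in which the two ``spines'' meet along the $n-1$ strands labelled $1$ on the top edge of $D_{\mathbf a}$ (identified with the bottom edge of $\overline{D_{\mathbf b}}$), and the $n-1$ strands labelled $1$ that were on the bottom of $D_{\mathbf a}$ get carried around the annulus and glued to the corresponding strands on the top of $\overline{D_{\mathbf b}}$. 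The resulting picture is a chain of digons: reading from the gluing seam, we get a digon with sides $a_i$ and $b_i$ between two edges each labelled $1$ (coming from the $i$-th layer), for each of the internal edges of the net.

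The key step is then repeated bubble reduction. Applying Corollary~\ref{theta} (the bubble reduction formula) at the innermost digon forces a Kronecker delta $\delta_{a_j,b_j}$ for the relevant index and collapses that digon, multiplying by the theta-factor $\Theta(\cdot)/\Delta_{(\cdot)}$; one then proceeds outward, peeling off one digon at a time. Since the boundary conditions pin down $a_1 = b_1 = a_{2n-1} = b_{2n-1} = 1$, and each reduction forces the next interior label to agree, an easy induction shows that if $\mathbf a \neq \mathbf b$ then some digon has $a_j \neq b_j$ and the whole diagram evaluates to $0$; if $\mathbf a = \mathbf b$ every reduction goes through and we get a nonzero product of theta-quotients (evaluated partly in $\mathbb R^2$ and, for the edges that become homotopically nontrivial curves in the annulus, using Corollary~\ref{Annulus}). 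In particular the Gram matrix in the basis $\mathcal D_n$ is diagonal.

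The main obstacle I anticipate is bookkeeping the annular geometry carefully: unlike the disc case, not every digon reduction happens in $\mathbb R^2$, because the closure in the annulus wraps the $n$ through-strands around the hole, so one must track which interior edge of the fused net becomes a homotopically nontrivial curve and invoke Corollary~\ref{Annulus} there while using ordinary bubble reduction (Corollary~\ref{theta}) everywhere else. I would argue that exactly the appropriate ``middle'' edges of the stacked net close up around the annulus, so that the off-diagonal vanishing is governed purely by the sequence of Kronecker deltas $\delta_{a_i,b_i}$ coming from Corollary~\ref{theta} and is insensitive to the annular vs.\ planar distinction; the diagonal values, which do depend on it, are computed separately in the main proof (Section~\ref{Main Proof}). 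With the vanishing of all off-diagonal entries established, the orthogonality statement follows.
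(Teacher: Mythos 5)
Your proposal is correct and follows essentially the same route as the paper, which simply adapts Cai's disc argument by invoking the bubble reduction of Corollary~\ref{theta} with $a \neq a'$ to produce the Kronecker deltas that kill the off-diagonal entries. Your additional care about which digon reductions occur in the annulus versus the plane is a sensible elaboration, but as you note it does not affect the vanishing argument, so the two proofs coincide in substance.
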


 \begin{proof}\
 
  We adapt the proof for the Gram determinant of type $A$ \cite{Cai} to the Gram determinant of generalized type $A$. In particular, we use Corollary \ref{theta network} for $a \neq a'$ to obtain orthogonality.
 \end{proof}

\begin{corollary}\label{DetUnchanged}\

The Gram determinant of generalized type $A$ is unchanged under the chance of basis from $\mathcal{A}_n$ to $\mathcal{D}_n$.

\end{corollary}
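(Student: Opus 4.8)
The plan is to combine the two immediately preceding results, Theorem~\ref{CBM} and Theorem~\ref{Orthogonal}, via the standard multiplicativity of the determinant under a change of basis of the underlying module. Recall that for a bilinear form $\langle\,,\,\rangle$ on a free module with two bases related by a transition matrix $P$, the associated Gram matrices $G$ and $G'$ satisfy $G' = P^{T} G P$, so that $\det G' = (\det P)^{2}\det G$. Thus the entire argument reduces to showing that the transition matrix $P$ between $\mathcal{A}_n$ and $\mathcal{D}_n$ has determinant $\pm 1$, and more precisely $\det P = 1$.

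Concretely, I would proceed as follows. First I would fix the ordering of the two bases $\mathcal{A}_n = \{a_1,\dots,a_{c_n}\}$ and $\mathcal{D}_n = \{D_{a_1,\dots,a_{2n-1}}\}$ so that Theorem~\ref{CBM} applies; by that theorem the transition matrix $P$ expressing the $\mathcal{D}_n$ basis in terms of the $\mathcal{A}_n$ basis is upper triangular with $1$'s on the diagonal, hence $\det P = 1$. Next I would write down the Gram matrix $G_n^{A^{gen}}$ in the $\mathcal{A}_n$ basis and the Gram matrix $G_n'$ of the same bilinear form in the $\mathcal{D}_n$ basis, and invoke the identity $G_n' = P^{T} G_n^{A^{gen}} P$ (this is just bilinearity, since the bilinear form of generalized type $A$ is the trace pairing and is $\mathbb{Z}[d,z]$-bilinear by construction). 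Taking determinants gives $\det G_n' = (\det P)^{2}\,\det G_n^{A^{gen}} = \det G_n^{A^{gen}}$, i.e.\ $D_n^{A^{gen}}$ computed in either basis is the same. Finally, Theorem~\ref{Orthogonal} tells us that in the $\mathcal{D}_n$ basis the matrix $G_n'$ is diagonal, which is exactly what will be needed in Section~\ref{Main Proof} to read off the closed formula; but for the present corollary one only needs the equality of determinants.

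There is essentially no obstacle here: the statement is a one-line consequence of the two theorems it follows, and the only thing to be careful about is the bookkeeping that the \emph{same} pairing is being used on both sides and that $P$ is invertible over $\mathbb{Z}[d,z]$ (which it is, being unipotent upper triangular, so its inverse is again an integer polynomial matrix). If anything deserves a sentence of justification it is the claim $G_n' = P^{T} G_n^{A^{gen}} P$ rather than $P^{-1} G_n^{A^{gen}} P$ or some other combination; this is because a Gram matrix of a bilinear form transforms covariantly in both slots, not by conjugation, so both copies of $P$ appear without inverses. With that noted, the proof is simply: $D_n^{A^{gen}} = \det(G_n^{A^{gen}}) = (\det P)^2 \det(G_n^{A^{gen}}) = \det(P^{T} G_n^{A^{gen}} P) = \det(G_n')$, the determinant of the Gram matrix in the basis $\mathcal{D}_n$.
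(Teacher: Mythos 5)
Your proposal is correct and is essentially the paper's own argument: the paper deduces the corollary directly from Theorem~\ref{CBM}, since the transition matrix is unipotent upper triangular and hence has determinant $1$. Your write-up merely makes explicit the standard identity $G' = P^{T} G P$ and the resulting equality of determinants, which is exactly the implicit content of the paper's one-line proof.
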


\begin{proof}\

The proof follows from Theorem \ref{CBM}. 
\end{proof}

\subsection{Proof of the Main Theorem}\label{Main Proof}
% We will follow Cai's proof till we really need to use our generalization. 
By Proposition \ref{Orthogonal} and Corollary \ref{DetUnchanged}, it suffices to find the diagonal entries of the Gram matrix of generalized type $A$ in the basis $\mathcal{D}_n$. We follow the proof in \cite{Cai} in the evaluation using Corollaries \ref{theta} and \ref{Annulus}.

Since we work in the annulus, we need to modify the part of Cai's proof which takes into account the terms appearing after bubble reduction (using Corollary \ref{theta}). 
%What is different is that after bubble reduction in Cai's proof the result is $S_{a_n}(d)$ while in our case we obtain $S_{a_n}(z)$ (see Figure \ref{bubble}).
The distinction in our case is that after bubble reduction, we obtain $S_{a_n}(z)$, while in Cai's proof the bubble reduction gives $S_{a_n}(d)$.

\begin{figure}[ht]
\centering
$$
\vcenter{\hbox{
\begin{overpic}[unit=1mm, scale = 1]{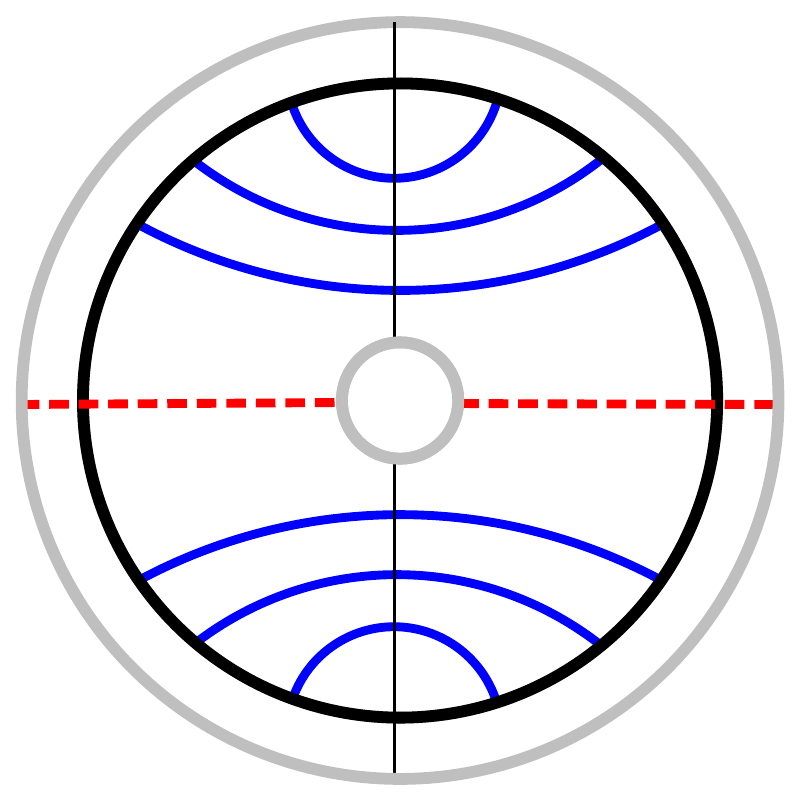}

\put(44,74.5){$a_1$}
\put(33,74.5){$a_1'$}
\put(56,70){$a_2$}
\put(20,70){$a_2'$}
\put(74,43){$a_n$}
\put(3.4,43){$a_n'$}

\put(41,5.15){$a_{2n-1}$}
\put(30,5.15){$a_{2n-1}'$}
%\put(20,20){\includegraphics[scale=.5]{TLei.pdf}}
%\put(50,46){$\hdots$}
%\put(-6.5, 17){$-1$}
%\put(-6.5, 10){$-2$}
%\put(-3,24){$0$}
%\put(3, -4){The Generalized type $A$ Bilinear Form}
\end{overpic} }}
$$
\caption{Bubble reduction.}\label{bubble}
%}} $$
\end{figure}

Recall that $a_n$ is the middle term of the sequence in Definition \ref{basis}. This term will give the factor $S_{a_n}(z)$ in the Gram determinant of generalized type A. Therefore, to see the difference between the Gram determinant of type $A$ and the Gram determinant of generalized type $A$, for fixed $i$, we have to count the number of states with the term which has the value $a_n = n-2i,$ for $0\leq i \leq \frac{n}{2}.$ We find the exponents of $S_{n-2i}(z)$ using the Desir\'{e} - Andr\'{e} reflection principle. We show that this number is equal to $\big( {n \choose i}-{n \choose i-1}\big)^2$. 
This follows from the next lemma. 

\begin{lemma}\label{lattice path}\

Consider all mountain paths from $(0,0)$ to $(n, n - 2i)$ that lie completely in the first quadrant of the $xy$ - plane. The number of such paths is ${n \choose i} - {n \choose i-1}$. 

\end{lemma}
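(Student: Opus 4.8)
The plan is to count lattice paths from $(0,0)$ to $(n, n-2i)$ using unit steps $(1,1)$ (up) and $(1,-1)$ (down) — these are the mountain paths attached to the admissible sequences $\{a_1, \dots, a_{2n-1}\}$ in Definition \ref{basis}, restricted here to the first $n$ steps that record the heights $a_1, \dots, a_n$ with $a_1 = 1$ shifted appropriately, or equivalently paths that stay in the region $y \geq 0$. First I would record the unconstrained count: a path of length $n$ ending at height $n-2i$ makes $n-i$ up-steps and $i$ down-steps, so there are $\binom{n}{i}$ such paths with no positivity constraint. The task is then to subtract those paths that dip below the $x$-axis, i.e. touch the line $y = -1$ at some point.

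The key step is the Désiré André reflection principle, exactly as flagged in the surrounding text. Given any path from $(0,0)$ to $(n, n-2i)$ that touches $y = -1$, let $t$ be the first time it reaches height $-1$, and reflect the portion of the path \emph{after} time $t$ across the horizontal line $y = -1$. This produces a path from $(0,0)$ to $(n, -2 - (n-2i)) = (n, -(n-2i+2))$, i.e. to height $-(n - 2(i-1)) = 2(i-1) - n$; such a path has $i-1$ up-steps and $n - i + 1$ down-steps, and the reflection is a bijection between bad paths and \emph{all} paths to that reflected endpoint (every path to a negative endpoint necessarily crosses $y=-1$). Hence the number of bad paths is $\binom{n}{i-1}$, and the number of good paths is $\binom{n}{i} - \binom{n}{i-1}$, as claimed. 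I would also note the boundary case: when $i = 0$ the term $\binom{n}{-1}$ is $0$ and the formula correctly gives the single all-up path, and one should check $n - 2i \geq 0$ so the target height is nonnegative (otherwise there are no mountain paths at all, consistent with the range $0 \le i \le n/2$).

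The only real subtlety — and the step I would be most careful about — is matching the combinatorial model to the geometry of the basis elements $D_{a_1,\dots,a_{2n-1}}$: one must be sure that "lies completely in the first quadrant" is the correct translation of admissibility of all the triple points, and in particular that the relevant constraint on the heights $a_j$ is $a_j \geq 0$ (nonnegativity) rather than $a_j \geq 1$, and that after the shift the reflecting line is $y = -1$ and not $y = 0$. Depending on that convention the subtracted term is $\binom{n}{i-1}$ as stated. Once the model is pinned down, the reflection argument is completely routine; I would present it in one short paragraph with a figure indicating the reflection of the tail of a bad path across $y = -1$, and remark that squaring this count — because the Gram matrix in the basis $\mathcal{D}_n$ is indexed by \emph{pairs} $(a_1,\dots,a_{2n-1}),\ (a_1',\dots,a_{2n-1}')$ with common middle value $a_n = n - 2i$ — yields the exponent $\big(\binom{n}{i} - \binom{n}{i-1}\big)^2$ of $S_{n-2i}(z)$ appearing in Theorem \ref{GenA}.
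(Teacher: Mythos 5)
Your proof is correct and uses the same argument as the paper: count all $\binom{n}{i}$ unconstrained paths and subtract the $\binom{n}{i-1}$ bad paths via the Andr\'{e} reflection principle. The only cosmetic difference is that you reflect the tail of a bad path after its first visit to $y=-1$ (landing at a negative endpoint), whereas the paper reflects the initial segment (starting the path at $(0,-2)$); both give the same bijection and the same count.
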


\begin{proof}\

\begin{figure}[ht]
\centering
$$
\vcenter{\hbox{
\begin{overpic}[unit=1mm, scale = .35]{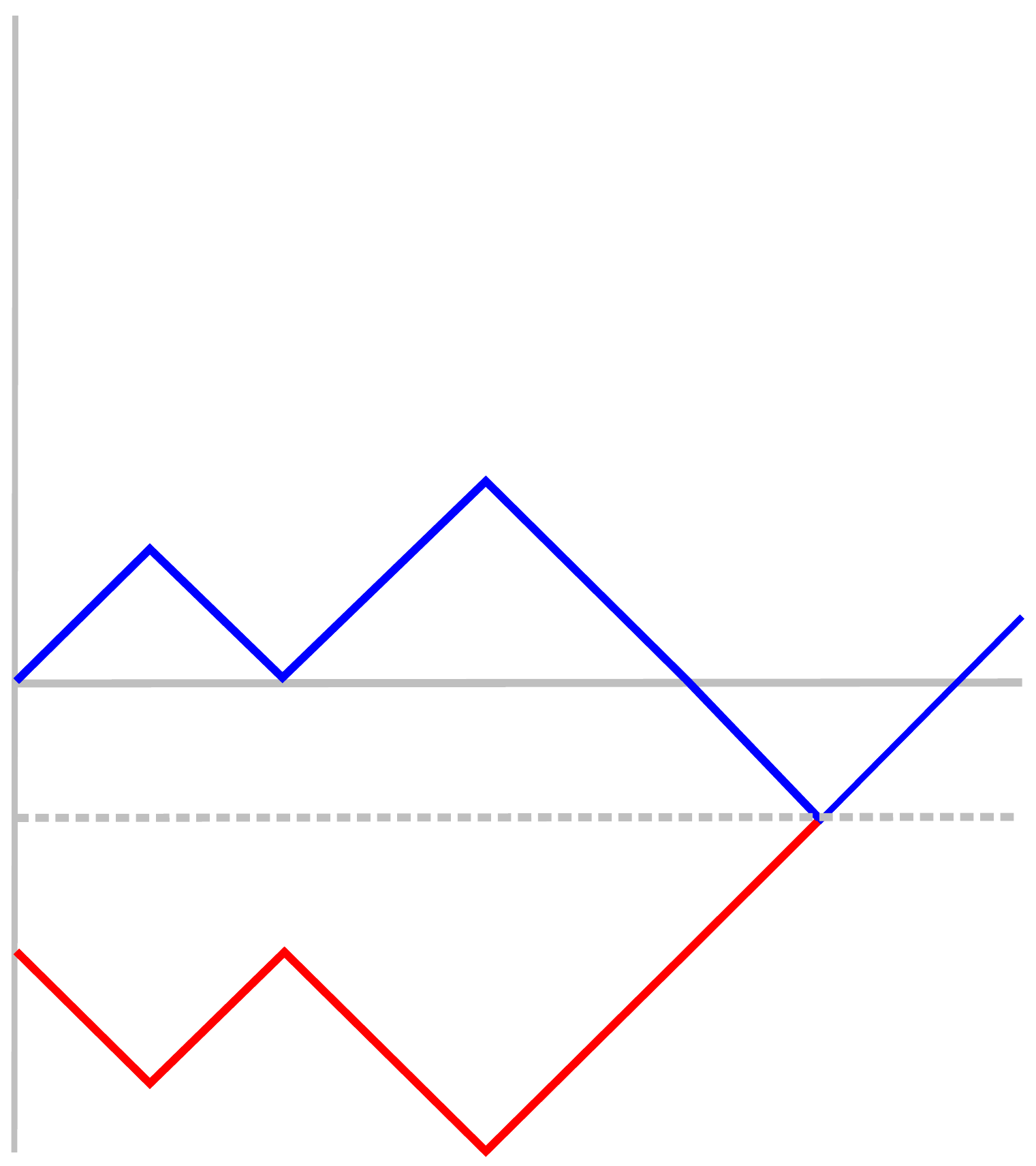}
\put(-6.5, 17){$-1$}
\put(-6.5, 10){$-2$}
\put(-3.65,24){$0$}
\end{overpic} }}
$$
\caption{Mountain paths and their reflection about the horizontal axis $y = -1$.}\label{Mountain}
%}} $$
\end{figure}

Consider all mountain paths from $(0,0)$ to $(n, n - 2i)$. These are paths whose piecewise smooth components have slope $\pm 1$. See Figure \ref{Mountain} for an example. There are ${n \choose i}$ such paths. Note that these paths can have portions that lie below the $x$ - axis. We now find the number of mountain paths that have portions below this axis (`bad' paths). Travel along the path from $(0,0)$ till you reach the point that has the $y$ coordinate equal to $-1$ for the first time. Reflect the path till this point about the horizontal line $y = -1$, keeping the remaining portion of the path untouched. We obtain the path from $(0, -2)$ till $(n,  n - 2i)$.  See Figure \ref{Mountain}. This construction gives a bijection from all such bad paths to all paths from $(0, -2)$ to $(n, n - 2i)$. The number of these paths is $n \choose i-1$. Therefore, the number of paths lying completely above the $x$ - axis is ${n \choose i} - {n \choose i-1}$. 
\end{proof}

\begin{figure}[ht]
\centering
$
\vcenter{\hbox{
\begin{overpic}[unit=1mm, scale = .5]{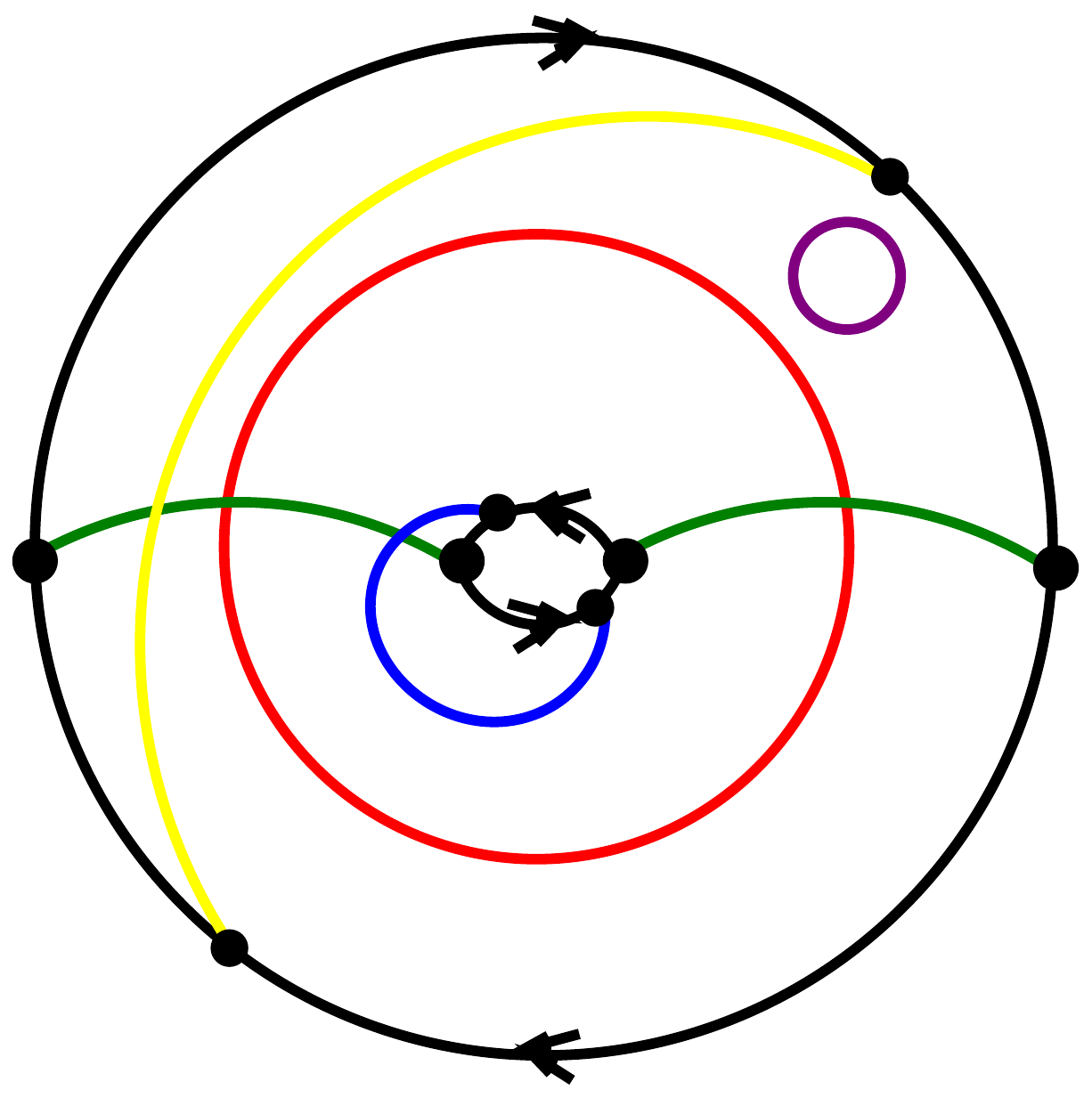}
\put(15,50){$y$}
\put(25, 18.5){$x$}
\put(35,10.5){$z$}
\put(50,30){$w$}
\put(50 ,40){$d$}
\end{overpic} }}
$
\caption{The five different simple closed curves on the Klein bottle. }\label{klein}
\end{figure}

\section{Future Directions: The Gram Determinant of Type Mb}

The fourth author proposed the notion of the Gram determinant of type $Mb$ evaluated in the Klein bottle \cite{Prz3}, that is, the study of the Gram determinant based on crossingless connections on a M\"obius band. In this case, the bilinear form is defined by identifying two M\"obius bands along their boundaries (see Definition \ref{mob}). The identification produces a Klein bottle which has five homotopically distinct curves and thus the Gram matrix given by this bilinear form on the elements of $S_{2, \infty}( Mb \ \hat \times \ I,\{x_i\}_1^{2n})$ has entries in the five variables that denote these curves.  After some preliminary calculations, Chen proposed the following
conjecture \cite{Che} for the Gram determinant of the M\"obius band.

\begin{conjecture}[Chen] \label{Qi}\

Let $R = \mathbb{Z}[A^{\pm 1}].$ Then,
the Gram determinant of type $Mb$ for $n \geq 1$, denoted $D_n^{(Mb)}$, is

 \begin{eqnarray*}
D^{(Mb)}_n(d,w,x,y,z) &=& \prod_{k=1}^n (T_k(d)+(-1)^kz)^{\binom{2n}{n-k} } \\
& & \prod_{k=1 \atop k \text{ odd }}^n (T_k(d) - (-1)^k z)T_k(w) -2xy)^{\binom{2n}{n-k}} \\
& & \prod_{ k=1 \atop k \text{ even }}^n (T_k(d) - (-1)^kz)T_k(w)-2(2-z))^{\binom{2n}{n-k}} \\
& & \prod_{i=1}^n D_{n,i},
\end{eqnarray*}
where $D_{n,i} = \prod\limits_{k=1+i}^n (T_{2k}(d)-2)^{\binom{2n}{n-k}}$, and $i$ represents the number of curves passing through the crosscap.
%\ 
%Alternatively,
%$$D^{(Mb)}_n = \prod_{i=0}^n D_{n,i}\prod_{j=1}^n{\mathbf O}_{n,j}^{2n \choose n-j},$$
% where $D_{n,0} = \prod\limits_{k=1}^n (T_k(d)^2-z^2)^{\binom{2n}{n-k}}$,  and for $i>0$, $ \ D_{n,i} = \prod\limits_{k=1+i}^n (T_{2k}(d)-2)^{\binom{2n}{n-k}}$,  $O_{n, 2i} = T_{2i}(w) - \frac{2(2-z)}{T_{2i}(d) - z}$, and $O_{n,2i+1} = T_{2i+1}(w)- \frac{2xy}{T_{2i+1}(d) +z}$. Here $i$ represents the number of curves passing through the crosscap.
\end{conjecture}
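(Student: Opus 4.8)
\textbf{A proof proposal for Chen's conjecture (Conjecture~\ref{Qi}).}
The plan is to transplant to the Klein bottle the strategy that settled the Gram determinants of generalized type $A$ and of type $B$: change basis so that the bilinear form becomes (block) diagonal, then evaluate the diagonal entries by theta-net calculus, and finally read off the exponents from a lattice-path count. Where the direct diagonalisation is obstructed, the fallback is the encircling--involution--leading-term argument of \cite{Ch-P}.

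First I would construct the M\"obius-band analog of the idempotent-decorated basis $\mathcal{D}_n$ of Definition~\ref{basis}. A crossingless connection in the M\"obius band is encoded by an admissible labelled trivalent graph as in Figure~\ref{Admissibletriplepoint}, augmented by one extra non-negative integer $i$ recording how many strands of the connection run through the crosscap; since $S_{2,\infty}(Mb\,\hat\times\,I,\{x_i\}_1^{2n})$ is free of rank $\binom{2n}{n}$ --- the M\"obius-band counterpart of Corollary~\ref{impcor}(2) --- these data index a basis. As in Theorem~\ref{CBM}, the change of basis from the geometric states is expected to be unitriangular, so $D_n^{(Mb)}$ is unaffected, and, as in Theorem~\ref{Orthogonal}, iterated bubble reduction (Corollary~\ref{theta network}) should put the Gram matrix in block-diagonal form with blocks indexed by the pair $(k,i)$, $k$ being the middle label of the graph. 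The theorem then amounts to identifying the $(k,i)$-block's diagonal entry with the matching factor of the conjectured product and its size with the stated binomial exponent.

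The exponent count is the two-parameter refinement of Lemma~\ref{lattice path}: one enumerates decorated states with prescribed middle label $k=n-2j$ and prescribed crosscap number $i$, and applying the Desir\'{e}--Andr\'{e} reflection principle in both parameters should produce the factors $\binom{2n}{n-k}$ and, after summing over admissible $i$, the nested triangular product $\prod_{i=1}^n D_{n,i}$ with $D_{n,i}=\prod_{k=1+i}^n(T_{2k}(d)-2)^{\binom{2n}{n-k}}$ --- the feature with no analog in types $A$ and $B$. Evaluating a block's diagonal entry calls for a Klein-bottle analog of Corollary~\ref{Annulus}: one pushes a $k$-cabled Jones--Wenzl idempotent first around the core of the M\"obius band and then, after the two bands are glued, around the five curve classes of Figure~\ref{klein}. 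Running a $k$-cable once around the core of a M\"obius band evaluates to a Chebyshev polynomial of the first kind --- the standard computation responsible for the $T_k$'s in \cite{Ch-P} --- which is why $T_k(d)$ and $T_k(w)$ occur; the $(-1)^k$ signs and the split between the $xy$-correction for odd $k$ and the $2(2-z)$-correction for even $k$ should come from whether the cable meets the crosscap an odd or even number of times, equivalently from whether the band it sweeps out is M\"obius or annular. For the divisibility route one instead decorates the gluing circle by $f_k$, which annihilates every decorated state of middle label below $k$, so the Gram matrix acquires nullity at least $\binom{2n}{n-k}$ and $D_n^{(Mb)}$ is divisible by the $k$-th factor raised to that power; using in addition the order-two symmetry exchanging the two M\"obius bands (an explicit involution of $d,w,x,y,z$) together with a comparison of total degrees and leading coefficients against the product of diagonal entries, the overall constant is forced to be $1$.

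The step I expect to be the main obstacle is the skein theory of Jones--Wenzl idempotents interacting with the crosscap. Over the annulus, Corollary~\ref{Annulus} gives one clean Chebyshev factor; over a M\"obius band a cable pushed through the crosscap resolves into several inequivalent configurations, and keeping the resulting five-variable evaluations under control --- in particular confirming that the even-$k$ diagonal entry is genuinely $(T_k(d)-(-1)^kz)T_k(w)-2(2-z)$ rather than a superficially different polynomial with the same specialisations --- will likely demand a careful, probably computer-assisted, analysis of small $n$ before the general recursion can be pinned down. A secondary difficulty is non-orientability: the basis theorem quoted from \cite{Prz2} and the theta-net evaluation of Theorem~\ref{Iknow} are stated in the orientable setting, so both, and the bubble-reduction identity of Corollary~\ref{theta network} for digons straddling the crosscap, must be re-established in the twisted $I$-bundle $Mb\,\hat\times\,I$.
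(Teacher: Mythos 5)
The statement you are addressing is Conjecture \ref{Qi}: the paper does not prove it, and it remains open. The authors only record partial evidence --- the explicit $n=2$ computation, Proposition \ref{p} (divisibility by $((d+z)w-2xy)^{\binom{2n}{n-1}}$, obtained by column operations on the three-element blocks attached to a connection cutting the crosscap once), and Theorems \ref{P1} and \ref{P2}, which are themselves stated without proof. So there is no proof in the paper to compare yours against, and your text, as you concede in its final paragraphs, is a research programme rather than a proof: every load-bearing step (a unitriangular idempotent basis for the M\"obius band, orthogonality across the crosscap, a Klein-bottle analogue of Corollary \ref{Annulus}, the two-parameter reflection count) is left conjectural. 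Your fallback route --- decorating the gluing circle with $f_k$ to force nullity $\binom{2n}{n-k}$, then using an involution and a leading-term comparison --- is essentially how Theorem \ref{P1} and the type-$B$ formula of \cite{Ch-P} are obtained, but it can only yield divisibility by the first product $\prod_k(T_k(d)+(-1)^kz)^{\binom{2n}{n-k}}$; it says nothing about the factors involving $T_k(w)$, $xy$ and $2(2-z)$, nor about the triangular product $\prod_i D_{n,i}$, which is precisely where the conjecture's difficulty lies.

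Beyond the gaps you acknowledge, there is one concrete error. You assert that $\mathcal{S}_{2,\infty}(Mb\,\hat\times\,I,\{x_i\}_1^{2n})$ is free of rank $\binom{2n}{n}$ and index your proposed basis by an admissible sequence plus a single crosscap count $i$. By Definition \ref{mob} the number of crossingless connections in the M\"obius band is $|Mb_n|=\sum_{k=0}^{n}\binom{2n}{k}=2^{2n-1}+\frac{1}{2}\binom{2n}{n}$, strictly larger than $\binom{2n}{n}$ (eleven versus six already for $n=2$), and the relative skein module is moreover a module over the subalgebra generated by the boundary-parallel and core curves rather than a finite-rank free $R$-module. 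Your indexing therefore undercounts the states, so the block decomposition and the exponent bookkeeping built on it cannot close up as written. Realistic contributions here would be proofs of Theorems \ref{P1} and \ref{P2} along the lines of \cite{Ch-P} and \cite{P-Zh}, or an extension of the column-operation argument of Proposition \ref{p} to connections meeting the crosscap more than once.
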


In the remaining part of this paper, we discuss some of our work on this project and state a few results supporting Conjecture \ref{Qi} (Proposition \ref{p}, Theorem \ref{P1}, and Theorem \ref{P2}). Compare Theorem \ref{P1} and Theorem \ref{P2} with analogous cases in \cite{Ch-P} and \cite{P-Zh}, respectively.  

\begin{definition}\label{mob}\

Consider a M\"obius band with $2n$ marked points on its boundary, and $n$ crossingless curves connecting these $2n$ marked points. Denote the set of all diagrams of crossingless  connections on the M\"obius band with $2n$ points by $Mb_n$. 
Then the number of distinct crossingless connections is, %denoted by $|G_n^{Mb}|$, 
$$ |Mb_n| = \sum_{k=0}^{n} \binom{2n}{k} = 2^{2n-1} + \frac{1}{2} {2n \choose n}. $$
Notice that in the formula $\sum\limits_{k=0}^{n} {2n \choose k}$, $k$ tells us how many curves are disjoint from the crosscap (see Figure \ref{ex1}). 

Define a bilinear form $\langle \ , \ \rangle$ in the following way:
$$ \langle \ , \ \rangle : \mathcal{S}_{2,\infty}(Mb\  \hat \times \ I, \{x_i\}_1^{2n}) \times \mathcal{S}_{2,\infty}(Mb\ \hat \times \  I, \{x_i\}_1^{2n}) \longrightarrow \mathbb{Z}[d, w,x,y,z],$$
where $d, w, x, y,$ and $z$ denote the five homotopically distinct simple closed curves on the Klein bottle which are illustrated in Figure \ref{klein}.\footnote{The variable $d$ denotes the homotopically trivial curve, while $x$ intersects the inner crosscap, $y$ intersects the outer crosscap, $z$ separates the two crosscaps, and $w$ intersects both crosscaps.}

Given $m_i, m_j \in Mb_n$, identify the boundary component of $m_i$ with that of the inversion of $m_j$, respecting the labels of the marked points. The result is a collection of disjoint simple closed curves on the Klein bottle which are of five different types, as shown in Figure \ref{klein}. In particular, given two elements in $Mb_n$, the image of its bilinear form is a monomial in $\mathbb{Z}[d,x,y,z,w]$, that is, $\langle m_i , m_j\rangle =  d^mx^ny^kz^lw^h$ where $m,n,k,l$ and $h$ denote the number of these simple closed curves, respectively. 
%Then, $\langle m_i , m_j\rangle$ is a polynomial in variables $d,x,y,z,w$.\\

The Gram matrix of type $Mb$ is defined as $\ G_n^{Mb} = (\langle m_i , m_j\rangle)_{1 \leq i, j \leq |Mb_n|}$ and its determinant is denoted by $D_n^{Mb}$. Since the Gram matrix is given by the bilinear form, the size of the matrix, $|G_n^{Mb}|$, is equal to the number of different crossingless connections.

\end{definition}

\begin{example}\

For $n=2$, there are eleven different crossingless connections in the M\"obius band. 
\begin{figure}[ht]
\centering
$$ \left\{ \vcenter{\hbox{\includegraphics[scale = .15]{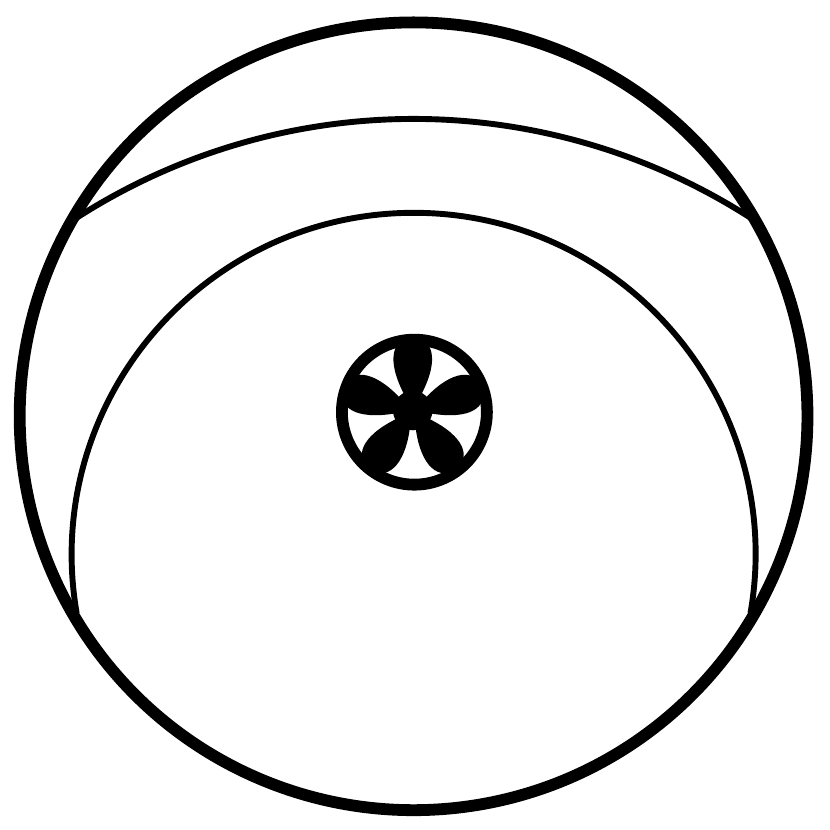}}} ,
\vcenter{\hbox{\includegraphics[scale = .15]{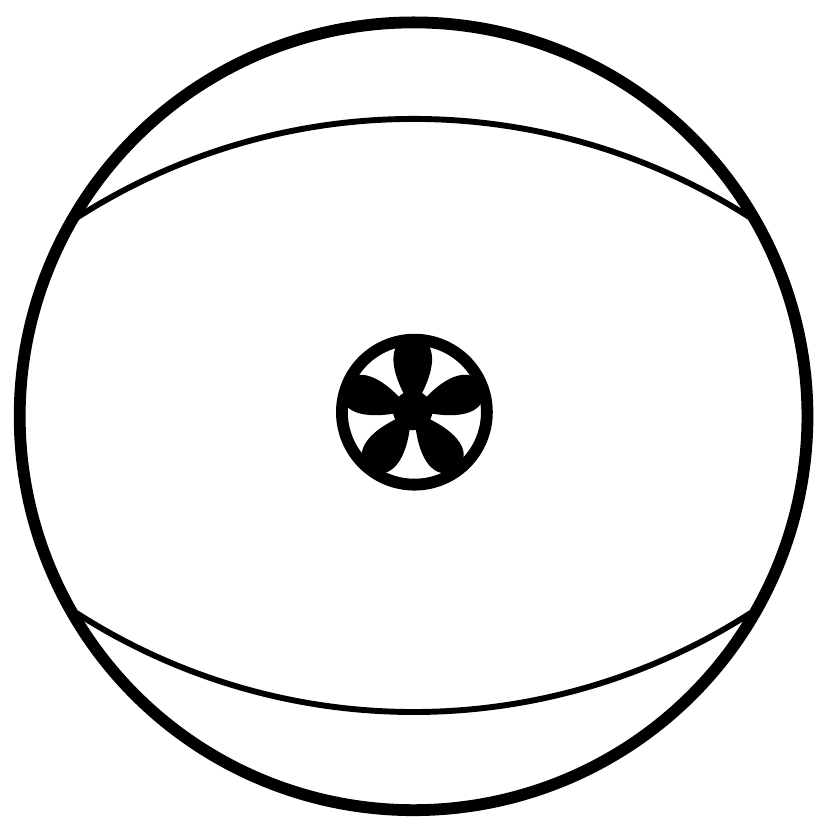}}},  
\vcenter{\hbox{\includegraphics[scale = .15]{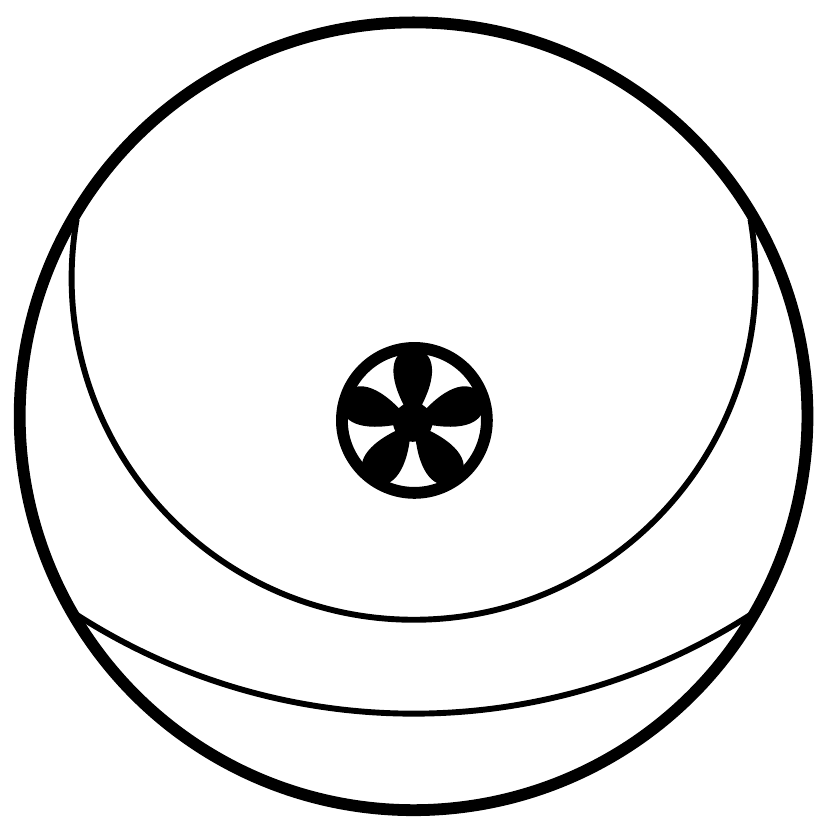}}},
\vcenter{\hbox{\includegraphics[scale = .15]{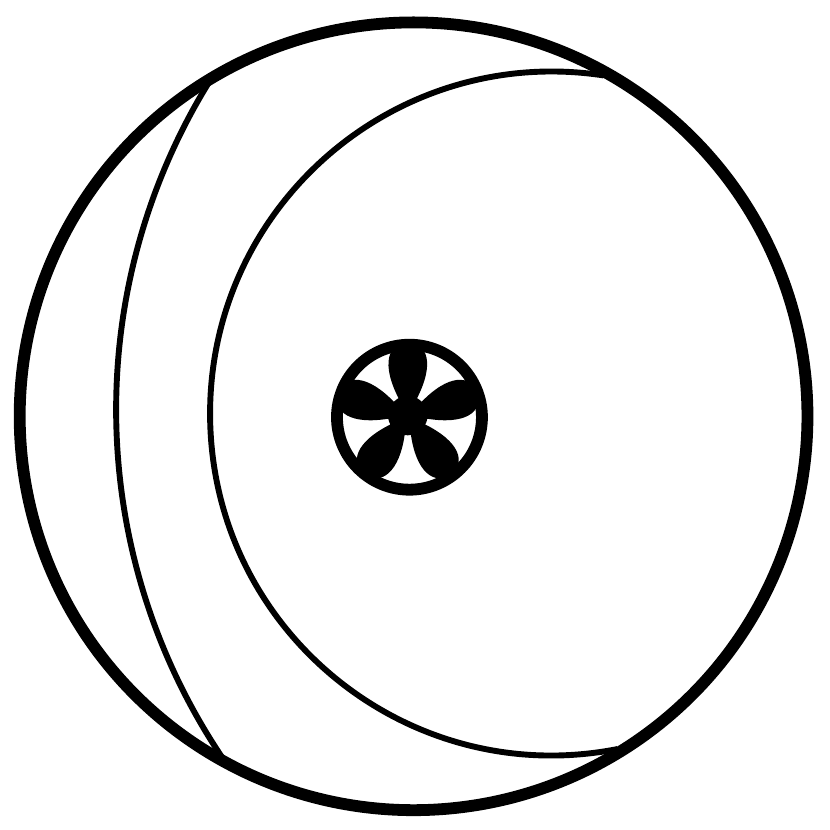}}}, 
\vcenter{\hbox{\includegraphics[scale = .15]{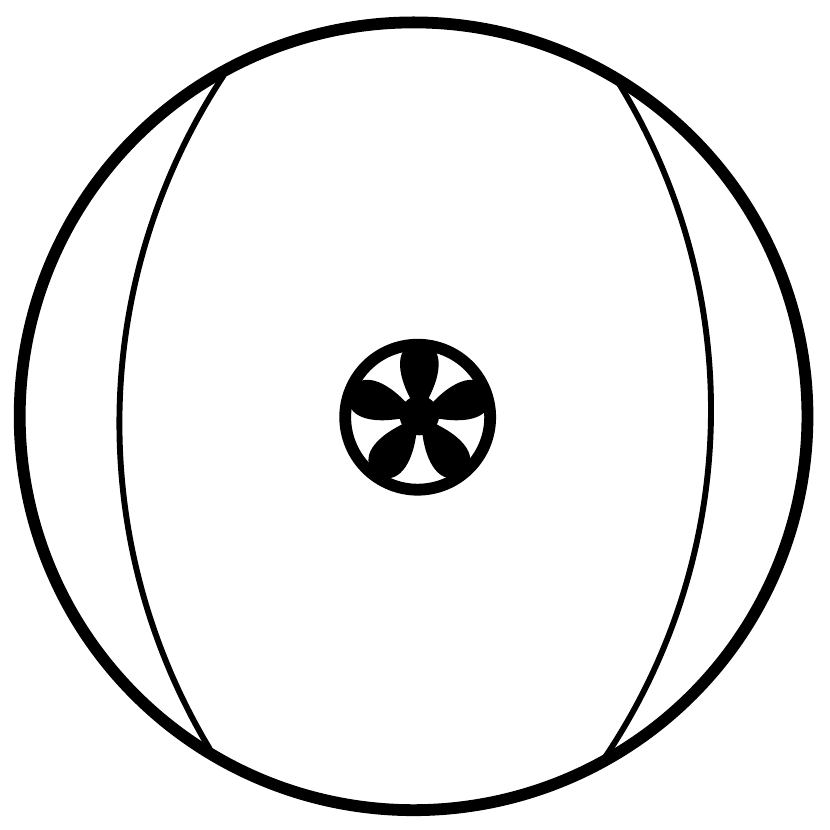}}} , 
\vcenter{\hbox{\includegraphics[scale = .15]{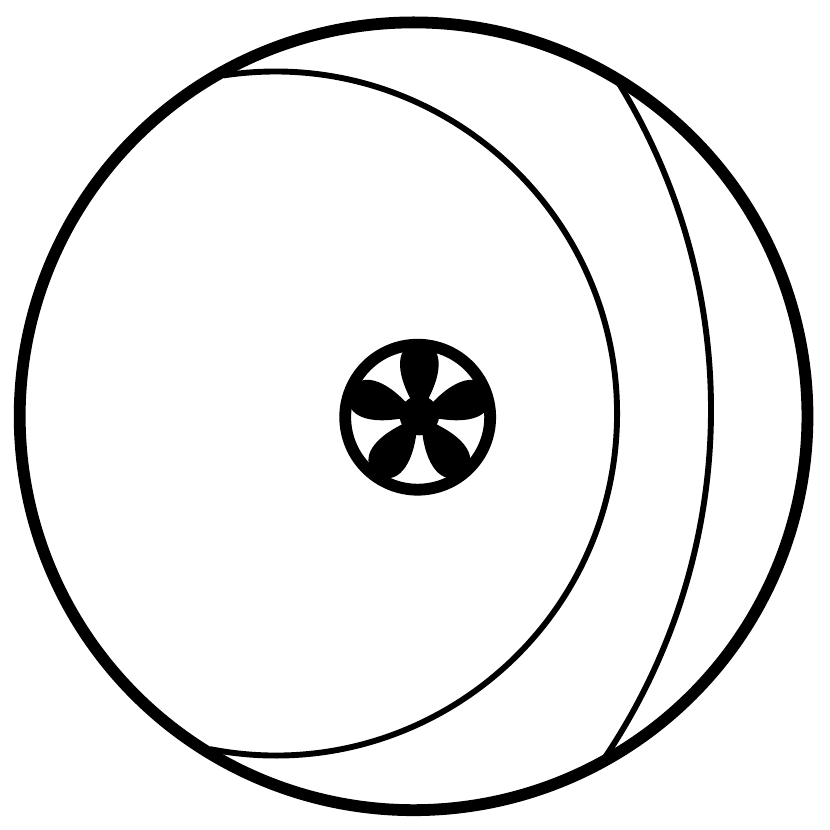}}},
\vcenter{\hbox{\includegraphics[scale = .15]{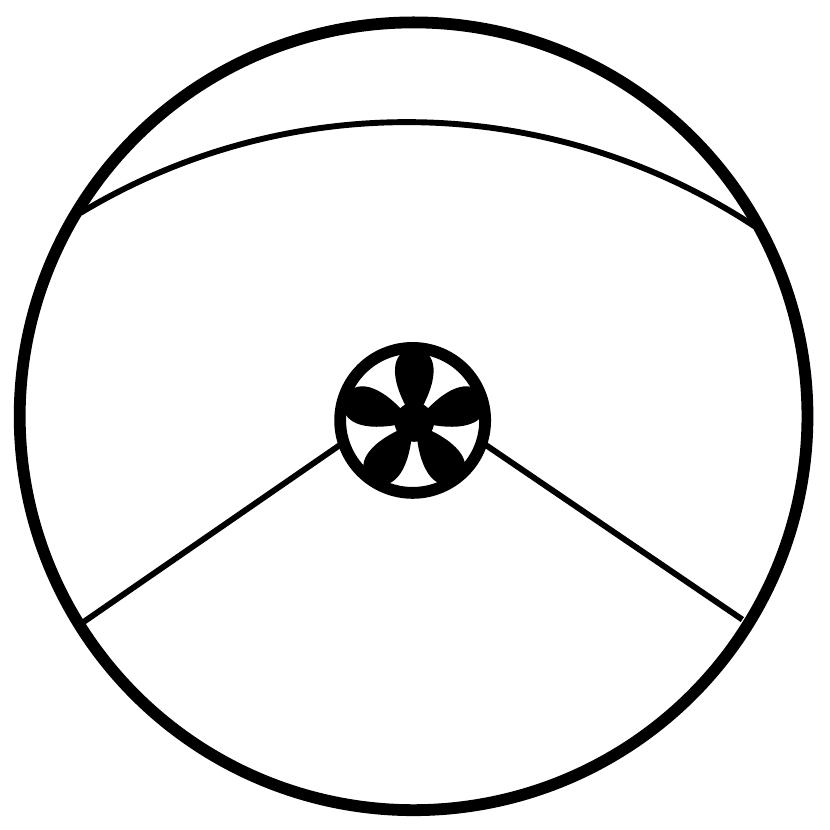}}}, 
\vcenter{\hbox{\includegraphics[scale = .15]{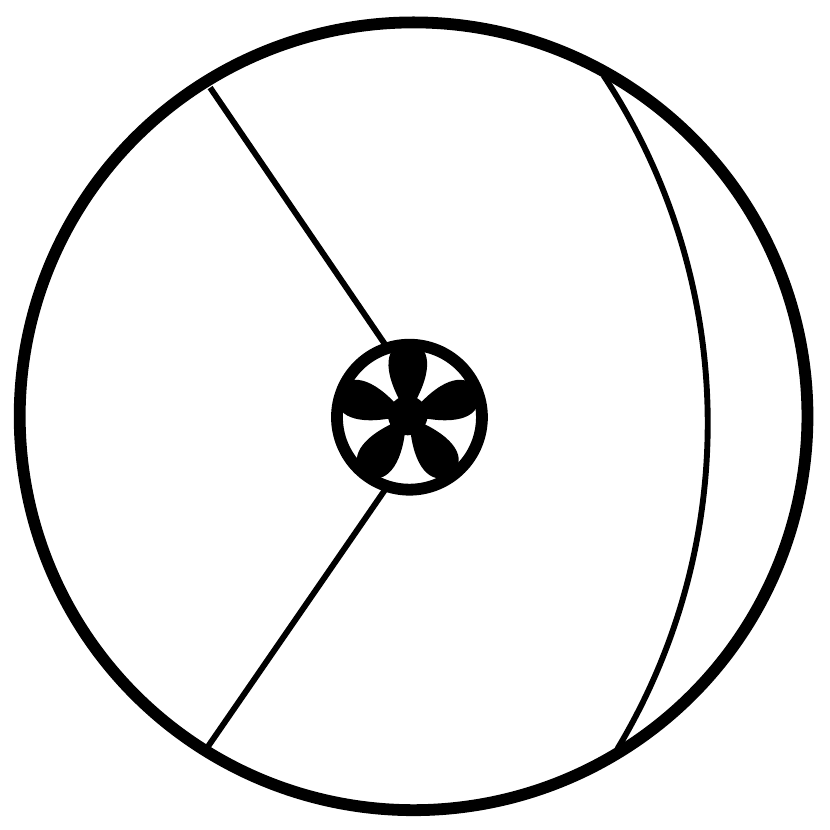}}} , 
\vcenter{\hbox{\includegraphics[scale = .15]{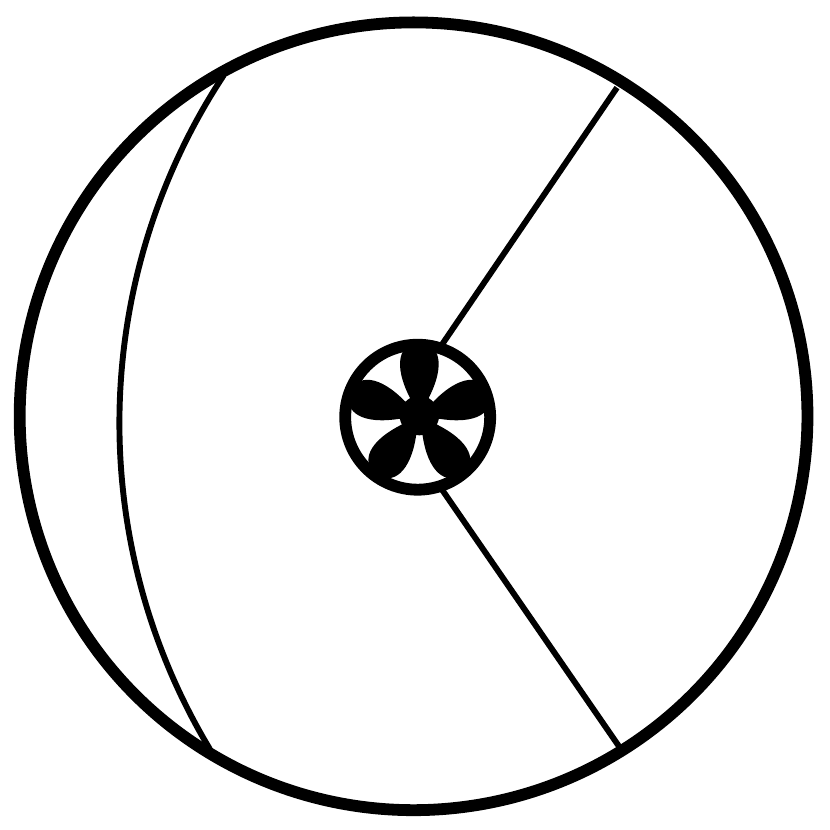}}}, 
\vcenter{\hbox{\includegraphics[scale = .15]{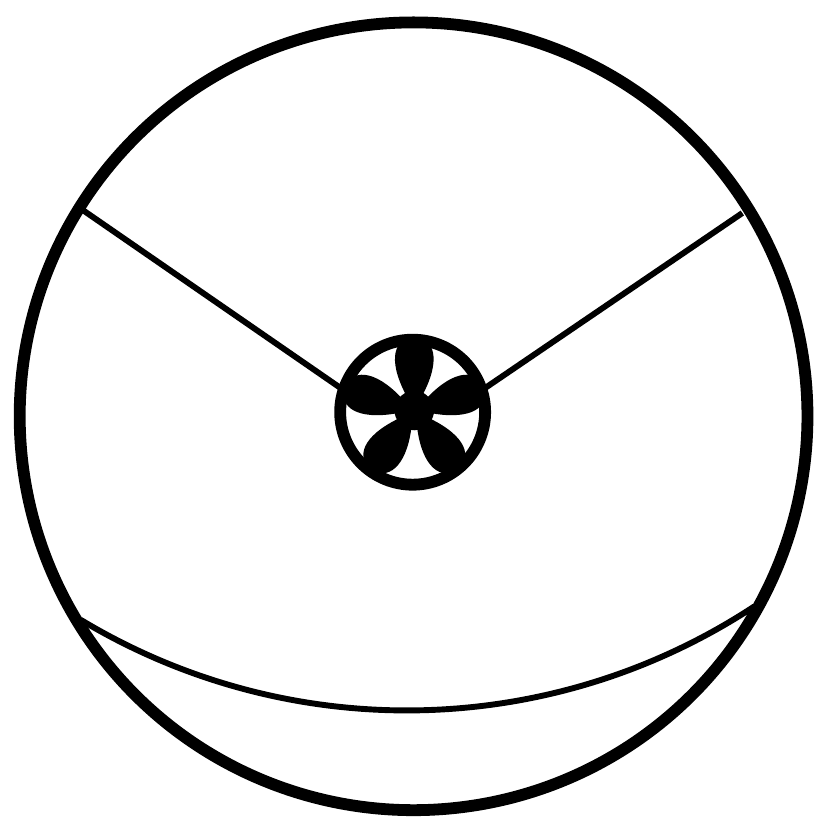}}} , 
\vcenter{\hbox{\includegraphics[scale = .15]{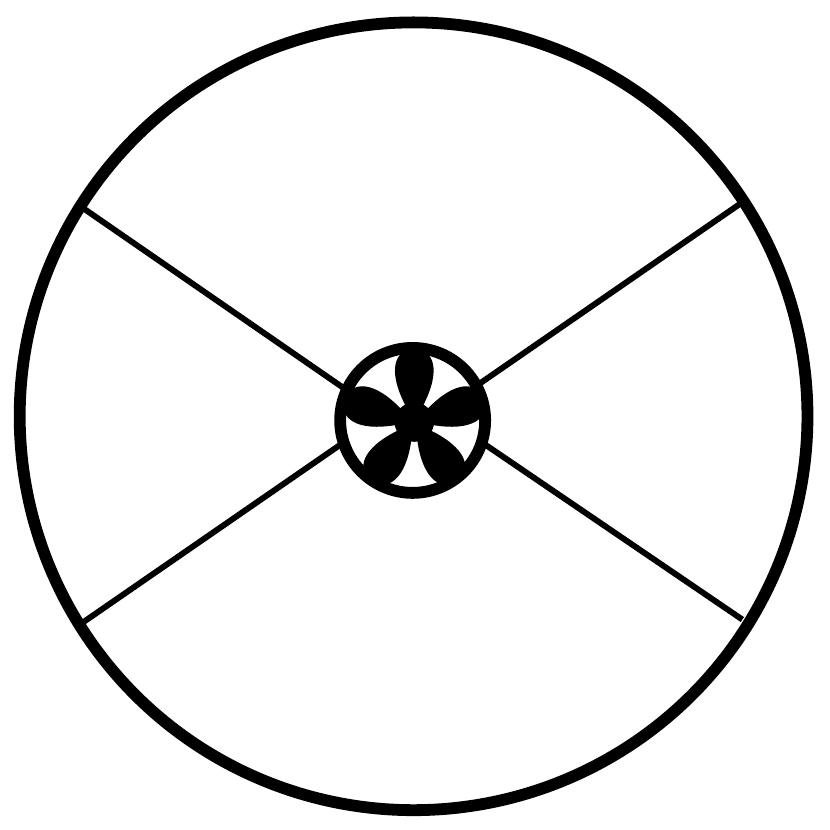}}} \right\} $$
\caption{
In the eleven different crossingless connections M\"obius band, the first six, $\binom{4}{2}$, do not intersect the crosscap, the next four, $ \binom{4}{1}$, pass through the crosscap once, and finally, the last one, $ \binom{4}{0}$, passes through the crosscap twice.}
\label{ex1}
\end{figure}

The Gram matrix given by the bilinear form is as follows: \\ \ 

{\centering
\resizebox{\columnwidth}{!}{%
\begin{tabular}{ c|||c|c|c|c|c|c||c|c|c|c||c}
 $\langle \ , \ \rangle$ & $\vcenter{\hbox{\includegraphics[scale = .13, height = 0.6cm]{GMB2_2.pdf}}}$ 
& $\vcenter{\hbox{\includegraphics[scale = .13,height = 0.6cm]{GMB2_1.pdf}}}$
& $\vcenter{\hbox{\includegraphics[scale = .13,height = 0.6cm]{GMB2_3.pdf}}}$
& $\vcenter{\hbox{\includegraphics[scale = .13,height = 0.6cm]{GMB2_4.pdf}}}$
& $\vcenter{\hbox{\includegraphics[scale = .13,height = 0.6cm]{GMB2_6.pdf}}}$
& $\vcenter{\hbox{\includegraphics[scale = .13,height = 0.6cm]{GMB2_5.pdf}}}$
& $\vcenter{\hbox{\includegraphics[scale = .13,height = 0.6cm]{GMB2_10.pdf}}}$
& $\vcenter{\hbox{\includegraphics[scale = .13,height = 0.6cm]{GMB2_8.pdf}}}$
& $\vcenter{\hbox{\includegraphics[scale = .13,height = 0.6cm]{GMB2_7.pdf}}}$
& $\vcenter{\hbox{\includegraphics[scale = .13,height = 0.6cm]{GMB2_9.pdf}}}$
& $\vcenter{\hbox{\includegraphics[scale = .13,height = 0.6cm]{GMB2_11.pdf}}}$ \\
\hline \hline \hline 	
$\vcenter{\hbox{\includegraphics[scale = .13, height = 0.6cm]{GMB2_2.pdf}}}$        
& $ d^2$ & $ dz$ & $z^2$ & $z$ & $d$ & $z$ & $dy$ & $y$ & $yz$ & $ y$ & $z $ \\ \hline 
$\vcenter{\hbox{\includegraphics[scale = .13, height = 0.6cm]{GMB2_1.pdf}}}$          
& $ dz$ & $ d^2$ & $dz$ & $d$ & $z$ & $d$ & $dy$ & $y$ & $dy$ & $ y$ & $d $ \\ \hline 
$\vcenter{\hbox{\includegraphics[scale = .13, height = 0.6cm]{GMB2_3.pdf}}}$ 
& $ z^2$ & $ dz$ & $d^2$ & $z$ & $d$ & $z$ & $yz$ & $y$ & $dy$ & $ y$ & $z $ \\ \hline 
$\vcenter{\hbox{\includegraphics[scale = .13, height = 0.6cm]{GMB2_4.pdf}}}$  
& $ z$ & $ d$ & $z$ & $d^2$ & $dz$ & $z^2$ & $y$ & $yz$ & $y$ & $ dy$ & $z $ \\ \hline 
$\vcenter{\hbox{\includegraphics[scale = .13, height = 0.6cm]{GMB2_6.pdf}}}$  
& $ d$ & $ z$ & $d$ & $dz$ & $d^2$ & $dz$ & $y$ & $dy$ & $y$ & $ dy$ & $d $ \\ \hline 
$\vcenter{\hbox{\includegraphics[scale = .13, height = 0.6cm]{GMB2_5.pdf}}}$  
& $ z$ & $ d$ & $z$ & $z^2$ & $dz$ & $d^2$ & $y$ & $dy$ & $y$ & $ yz$ & $z $ \\ \hline \hline
$\vcenter{\hbox{\includegraphics[scale = .13, height = 0.6cm]{GMB2_10.pdf}}}$  
& $ dx$ & $ dx$ & $xz$ & $x$ & $x$ & $x$ & $dw$ & $w$ & $xy$ & $ w$ & $x $ \\ \hline 
$\vcenter{\hbox{\includegraphics[scale = .13, height = 0.6cm]{GMB2_8.pdf}}}$  
& $x$ & $ x$ & $x$ & $xz$ & $dx$ & $dx$ & $w$ & $dw$ & $w$ & $ xy$ & $x $ \\ \hline 
$\vcenter{\hbox{\includegraphics[scale = .13, height = 0.6cm]{GMB2_7.pdf}}}$  
& $ xz$ & $ dx$ & $dx$ & $x$ & $x$ & $x$ & $xy$ & $w$ & $dw$ & $ w$ & $x $ \\ \hline 
$\vcenter{\hbox{\includegraphics[scale = .13, height = 0.6cm]{GMB2_9.pdf}}}$  
& $ x$ & $ x$ & $x$ & $dx$ & $dx$ & $xz$ & $w$ & $xy$ & $w$ & $ dw$ & $x $ \\ \hline \hline
$\vcenter{\hbox{\includegraphics[scale = .13, height = 0.6cm]{GMB2_11.pdf}}}$  
& $ z$ & $ d$ & $z$ & $z$ & $d$ & $z$ & $y$ & $y$ & $y$ & $ y$ & $w^2 $ 
\end{tabular}}}
 \\ \ \\ \ \\ \ 
The determinant of the Gram matrix $Det(G_{2}^{Mb} )$ is equal to :
 $$(d-z)^4[(d+z)w-2xy]^4(d^2(d^2-4))(d^2-2+z)[(d^2-2-z)(w^2-2)-2(2-z)].$$
 In terms of Chebyshev polynomials of the first kind, the formula coincides with Chen's conjecture and  $Det(G_{2}^{Mb} )$ is equal to: 
$$ (T_1(d)-z)^4[(T_1(d)+z)T_1(w)-2xy]^4(T_4(d)-2)(T_2(d)+z)[(T_2(d)-z)T_2(w)-2(2-z)] $$ 

%$$Det(G_{2}^{Mb} )= (d^2-z^2)^4((d^2-2)^2-z^2)(d^4-4d^2)\left(w-\frac{2xy}{d+z}\right)^4\left(w^2-2-\frac{2(2-z)}{d^2-2-z}\right). $$
\end{example}

\begin{figure}[ht]
\centering
$$
\vcenter{\hbox{
\begin{overpic}[unit=1mm, scale = .4]{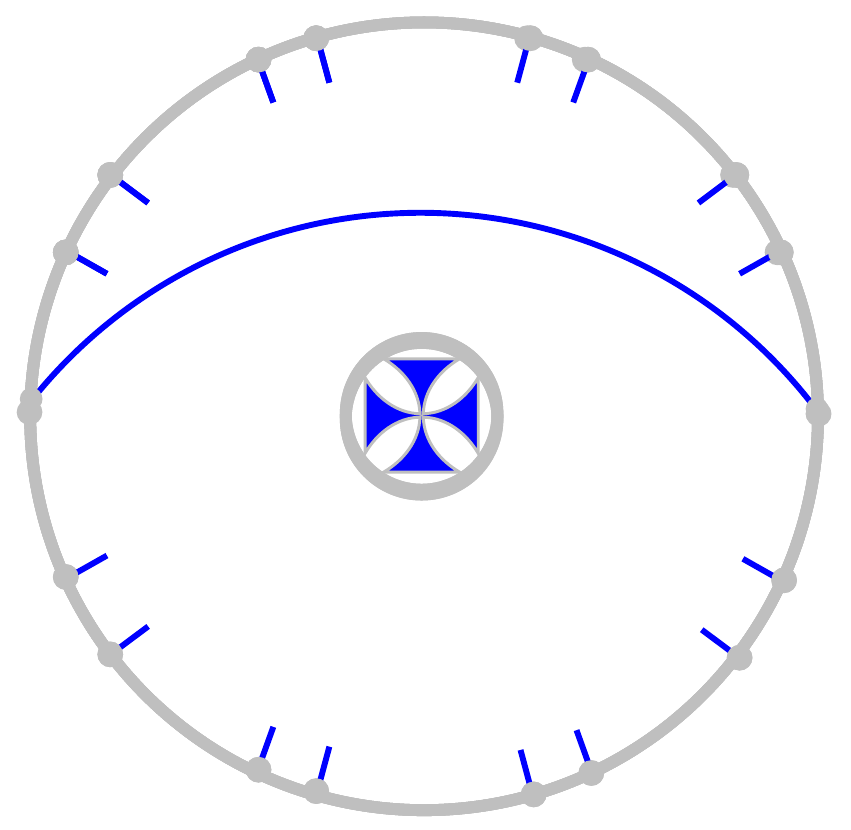}
\put(16,-3.5){$b_1$}
%\put(25, 18.5){$x$}
%\put(35,10.5){$z$}
%\put(50,30){$w$}
%\put(50 ,40){$d$}
%\put(3.4,43){$a_n'$}

%\put(41,5.15){$a_{2n-1}$}
%\put(30,5.15){$a_{2n-1}'$}
%\put(20,20){\includegraphics[scale=.5]{TLei.pdf}}
%\put(50,46){$\hdots$}
%\put(-6.5, 17){$-1$}
%\put(-6.5, 10){$-2$}
%\put(-3,24){$0$}
%\put(3, -4){The Generalized type $A$ Bilinear Form}
\end{overpic} }}  \ \ \ \ \ \ 
\vcenter{\hbox{
\begin{overpic}[unit=1mm, scale = .4]{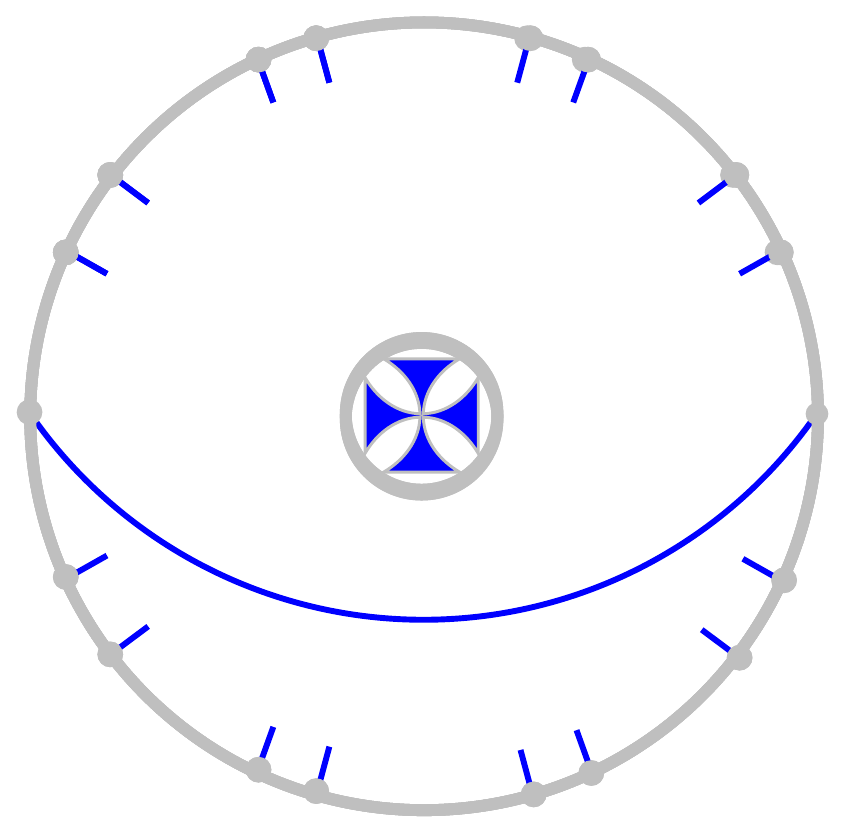}

\put(16,-3.5){$b_2$}
%\put(25, 18.5){$x$}
%\put(35,10.5){$z$}
%\put(50,30){$w$}
%\put(50 ,40){$d$}
%\put(3.4,43){$a_n'$}

%\put(41,5.15){$a_{2n-1}$}
%\put(30,5.15){$a_{2n-1}'$}
%\put(20,20){\includegraphics[scale=.5]{TLei.pdf}}
%\put(50,46){$\hdots$}
%\put(-6.5, 17){$-1$}
%\put(-6.5, 10){$-2$}
%\put(-3,24){$0$}
%\put(3, -4){The Generalized type $A$ Bilinear Form}
\end{overpic} }}  \ \ \ \ \ \ 
\vcenter{\hbox{
\begin{overpic}[unit=1mm, scale = .4]{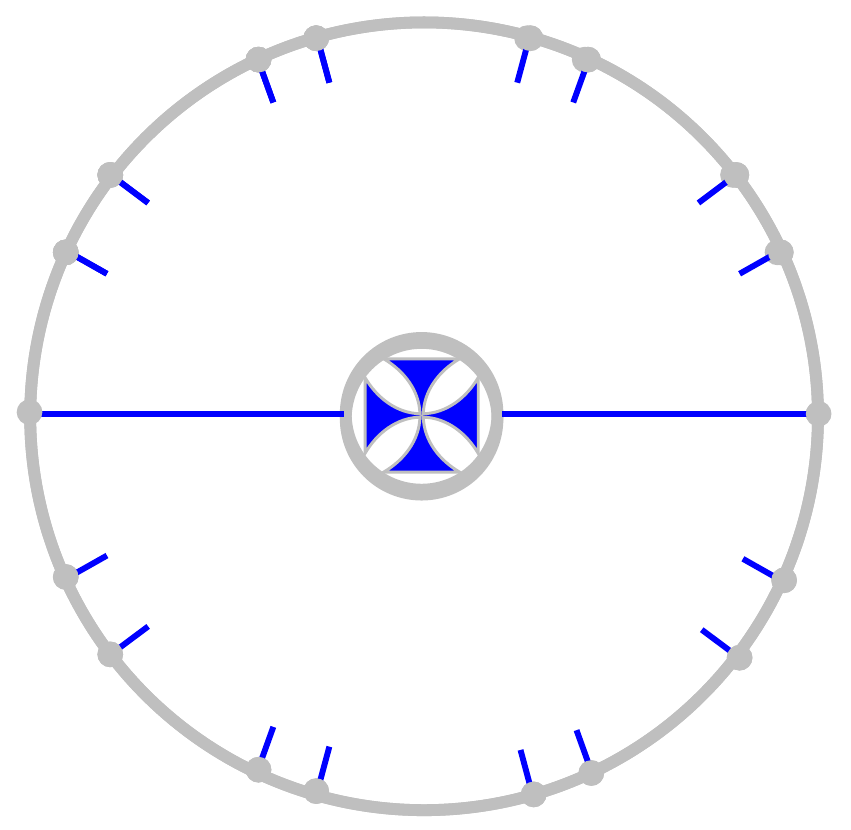}

\put(16,-3.5){$b_3$}
\put(24,19){$\alpha_3$}
%\put(15,50){$y$}
%\put(25, 18.5){$x$}
%\put(35,10.5){$z$}
%\put(50,30){$w$}
%\put(50 ,40){$d$}
%\put(3.4,43){$a_n'$}

%\put(41,5.15){$a_{2n-1}$}
%\put(30,5.15){$a_{2n-1}'$}
%\put(20,20){\includegraphics[scale=.5]{TLei.pdf}}
%\put(50,46){$\hdots$}
%\put(-6.5, 17){$-1$}
%\put(-6.5, 10){$-2$}
%\put(-3,24){$0$}
%\put(3, -4){The Generalized type $A$ Bilinear Form}
\end{overpic} }}
$$
\caption{The crossingless connection $b_3 \in Mb_n$ with the connection $\alpha_3$ in $b_3$ passing through the crosscap and the crossingless connections $b_1,b_2 \in Mb_n$ obtained from $b_3$ by modifying $\alpha_3$ so that it does not pass through the crosscap.} \label{newfigure}
%}} $$
\end{figure}

\begin{proposition}\label{p}\ 

$Det(G^{Kb}_n)$  is divisible by $(w(d+z)-2xy)^{2n \choose n-1}$.

\end{proposition}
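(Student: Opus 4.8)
The plan is to mimic, in the Klein bottle setting, the divisibility argument used for the Gram determinant of type $B$: produce a decoration that annihilates many Catalan states and thereby forces a large nullity, which translates into divisibility of the determinant by a high power of the relevant linear factor. Concretely, the factor $w(d+z)-2xy$ is exactly the value (in the five variables) that one obtains by decorating one of the two crosscaps with the first Jones--Wenzl idempotent $f_1$ — that is, by the evaluation of a single curve passing through the crosscap after forming the appropriate Hopf-type link with the M\"obius band core. So first I would set up the decorated bilinear form: take the M\"obius band $Mb$, form the Klein bottle by gluing two copies, and insert $f_1$ along a curve that runs through the crosscap (equivalently, cable the crosscap core with the $1$st Jones--Wenzl idempotent). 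Because $f_1$ is just a single strand, this ``decoration'' is really the statement that a particular linear combination of the five monomial-valuations, read off from how the five curve types interact with the inserted strand, equals $w(d+z)-2xy$ up to sign; I would verify this by a direct diagrammatic computation of the four or five relevant local pictures (trivial curve, $x$, $y$, $z$, $w$) against the inserted strand.

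Next I would count Catalan states after this decoration. The key point, paralleling the type $B$ proof sketched in the excerpt, is that inserting $f_1$ through the crosscap kills every crossingless connection on $Mb_n$ in which some strand is ``parallel'' to the crosscap core in a way that creates a returning cap on the idempotent — by property (1) of the Jones--Wenzl idempotent, $f_1 e_i = 0$. The surviving basis elements are exactly those connections that can be drawn compatibly with one extra strand forced through the crosscap, and the count of these should be $|Mb_n| - \binom{2n}{n-1}$. Here I would use the combinatorial description of $Mb_n$ given in Definition \ref{mob} (stratified by how many curves miss the crosscap, $\sum_{k=0}^n \binom{2n}{k}$) and the reflection-principle bookkeeping already used in Lemma \ref{lattice path}, to see that the ``bad'' connections number precisely $\binom{2n}{n-1}$. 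Consequently the decorated Gram matrix has rank at most $|Mb_n| - \binom{2n}{n-1}$, so its nullity is at least $\binom{2n}{n-1}$.

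From the nullity bound I get that $\det(G_n^{Mb})$, viewed as a polynomial in $\mathbb{Z}[d,w,x,y,z]$, vanishes on the subvariety cut out by the linear form $w(d+z)-2xy$ to order at least $\binom{2n}{n-1}$: specializing the variables so that the inserted-strand evaluation collapses the five monomials to this linear combination, the matrix degenerates to the decorated one, which has the stated nullity, and a standard argument (differentiating the determinant, or using that a square matrix over a domain with a corank-$r$ specialization has determinant divisible by the $r$-th power of the defining linear polynomial) yields $(w(d+z)-2xy)^{\binom{2n}{n-1}} \mid \det(G_n^{Mb})$. Since $w(d+z)-2xy$ is irreducible in $\mathbb{Z}[d,w,x,y,z]$, this is exactly the claimed divisibility.

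The main obstacle I anticipate is the second step — making the Catalan-state count after decoration rigorous. Showing that $f_1$ through the crosscap annihilates a connection is easy when a strand literally caps back onto the idempotent, but one must argue that \emph{every} connection outside the asserted surviving set does so (possibly after an isotopy on the Klein bottle), and conversely that the $|Mb_n| - \binom{2n}{n-1}$ remaining connections really are linearly independent after decoration so that the rank bound is tight enough — actually for divisibility one only needs the nullity lower bound, so strictly I only need the ``annihilation'' direction, which is the more tractable half. The subtle point is correctly identifying which of the five curve types the inserted strand produces in each local configuration (in particular distinguishing the $w$ versus $xy$ contributions, since their difference is what makes the linear factor nontrivial), and confirming the sign and the coefficient $2$ in $w(d+z)-2xy$; I would pin these down by working out the $n=1$ and $n=2$ cases explicitly and matching against the $n=2$ Gram matrix already tabulated in the excerpt, where the factor $[(d+z)w-2xy]^4 = (w(d+z)-2xy)^{\binom{4}{1}}$ appears.
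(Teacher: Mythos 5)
Your strategy is genuinely different from the paper's, but it has a fatal flaw at its core: the annihilation mechanism you invoke does not exist. The first Jones--Wenzl idempotent $f_1$ is simply the identity on one strand; property (1), $f_1 e_i = 0$, is vacuous in $TL_1$ because there are no $e_i$, and a single strand entering an idempotent cannot form a returning cap. So ``inserting $f_1$ through the crosscap'' kills nothing, and the subsequent claims --- that the surviving states number $|Mb_n| - \binom{2n}{n-1}$ and that the decorated evaluation equals $w(d+z)-2xy$ --- are reverse-engineered from the desired answer rather than derived. Note also that $w(d+z)-2xy$ is not the evaluation of a single inserted curve: it is quadratic in the curve variables (the monomials $wd$, $wz$, $xy$), which already signals that no single $f_1$-decoration produces it. Your final specialization step (corank $r$ modulo the irreducible polynomial $w(d+z)-2xy$ implies divisibility of the determinant by its $r$-th power, via the local ring at that prime) is a correct general principle, but it is being applied to an unestablished premise.

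What the paper actually does is a direct column-operation argument, closer in spirit to elementary linear algebra than to the idempotent-decoration proof of the type $B$ formula. For each connection $b_3$ whose arc $\alpha_3$ passes through the crosscap exactly once, there are two companions $b_1, b_2$ obtained by rerouting $\alpha_3$ off the crosscap, and the key observation is that \emph{every} row of the Gram matrix, restricted to the columns $(b_1,b_2,b_3)$, is a monomial multiple of one of three fixed vectors: $(d,z,y)$, $(z,d,y)$, or $(x,x,w)$. Hence the column replacement $b_3 \mapsto (d+z)b_3 - y(b_1+b_2)$ produces a column whose entries are all either $0$ or monomial multiples of $(d+z)w-2xy$; doing this for all $\binom{2n}{n-1}$ such $b_3$ and dividing out the harmless factor $(d+z)$ (coprime to $(d+z)w-2xy$) gives the claim. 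If you want to salvage your outline, you would need to replace the $f_1$-decoration by this explicit ``almost-null'' skein relation $(d+z)b_3 - y(b_1+b_2)$, and verify the three-row-pattern claim, which is the real content of the proposition.
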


\begin{proof}\

Consider a crossingless connection, that is, an element $b_3\in Mb_n$, which cuts the crosscap exactly once (denoted by $\alpha_3$). Let $b_1$ and $b_2$ be obtained from $b_3$ by modifying $\alpha_3$ so that it does not pass through the crosscap (see Figure \ref{newfigure}). Consider the block associated with these three elements.  For some monomial $u,$ we get:
\[
        \left[
                \begin{array}{ccc}
                        ud  &  uz &  uy \\
                        uz  &  ud &  uy  \\
                        ux  &  ux & u w
\end{array} \right]
        \]
If we replace $b_3$ by $(d+z)b_3-y(b_1+b_2)$, denoted by $b_3',$ we obtain the matrix:
\[
        \left[
                \begin{array}{ccc}
                        ud  &  uz &  0 \\
                        uz  &  ud &  0  \\
                        ux  &  ux & u((d+z)w-2xy)
\end{array} \right]
        \]
with determinant $(d+z)$ times the determinant of the previous matrix. Now, we observe that for any element $c \in Mb_n$ the row $\langle c,b_1\rangle,\langle c,b_2 \rangle, \langle c,b_3 \rangle$ is proportional (by a monomial) to $(d,z,y)$, $(z,d,y)$, or $(x,x,y).$ Thus, the column operation we did earlier produces either a zero or a monomial multiple of $((d+z)w-2xy)$. Therefore, the column under $b_3'$ is divisible by
$((d+z)w-2xy)$. Now, we can perform the same operation, as on $b_3,$ on any element (crossingless connection) which cuts the crosscap exactly once. There are ${2n \choose n-1}$ such elements and thus we obtain a matrix with ${2n \choose n-1}$ columns divisible by $((d+z)w-2xy)$. Additionally, note that $d+z$ and $((d+z)w-2xy)$ are co-prime. Therefore, $Det(G^{Kb}_n)$  is divisible by $(w(d+z)-2xy)^{2n \choose n-1}$.
\end{proof}

\begin{theorem} \label{P1}
$$\prod\limits_{k=1}^n(T_k(d)+(-1)^k z)^{2n \choose n-k} \mbox{ divides } D^{(Mb)}_n.$$
%{\color{red} should we include a sketch of the proof}
\end{theorem}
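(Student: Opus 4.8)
The plan is to mimic, in the Klein bottle setting, the divisibility argument that produces the factor $(T_k(d)+(-1)^kz)^{\binom{2n}{n-k}}$ in the Gram determinant of type $B$ (see the sketch following the type $B$ theorem and \cite{Ch-P}). Concretely, fix $k$ with $1\le k\le n$. The idea is to decorate one boundary circle of the Klein bottle---namely the core curve of \emph{one} of the two M\"obius bands, which after the identification becomes the curve that the variable $z$ counts---with the $k^{th}$ Jones--Wenzl idempotent $f_k$, at the specialization $z=(-1)^{k-1}T_k(d)$ (equivalently, the value at which a $k$-cabled curve with $f_k$ inserted evaluates to zero in the relevant sense). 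Under this decoration, a crossingless connection in $Mb_n$ survives only if it meets the decorated crosscap curve in at most, and in a controlled way relative to, $k$ strands; by the standard count this kills all but $\binom{2n}{n}+\binom{2n}{n-1}+\cdots$ worth of states so that the number of surviving states is $|Mb_n| - \binom{2n}{n-k}$ (the same drop as in type $B$, because the states that pass through the crosscap at least $k$ times are in bijection with the relevant annular Catalan states). Hence at this specialization the Gram matrix $G_n^{Mb}$ has nullity at least $\binom{2n}{n-k}$, so $D_n^{(Mb)}$ is divisible by $(T_k(d)+(-1)^k z)^{\binom{2n}{n-k}}$ as a polynomial in $z$ (treating the other variables $d,w,x,y$ as parameters; one checks $T_k(d)+(-1)^kz$ is a nonzero prime in $\mathbb{Z}[A^{\pm1}][d,w,x,y,z]$).

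First I would make precise the decoration: encircle the chosen crosscap with a small loop carrying $f_k$, using that $f_k$ is an idempotent with $f_ke_i=0$, so that any connection entering that crosscap region with fewer than $k$ parallel strands, or with a ``turnback'' (a cap $e_i$), is annihilated. This is exactly the mechanism in Corollary~\ref{theta network} (bubble reduction kills the term when the Jones--Wenzl labels disagree) transported to the surface. Next I would carry out the state count: after inserting $f_k$, a surviving basis element must route exactly the ``through'' strands across the crosscap in the unique crossingless way compatible with $f_k$, and the remaining $n-k$ (in pairs) arcs are free; counting these gives $\sum_{j\le n-k}\binom{2n}{j}$ or the appropriate binomial expression, so that the rank drops by precisely $\binom{2n}{n-k}$. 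Then I would invoke the involution $z\mapsto -z$ induced by an orientation-reversing symmetry of the Klein bottle (swapping the two crosscaps, or a reflection) to upgrade divisibility by $(T_k(d)+(-1)^kz)^{\binom{2n}{n-k}}$ to the same statement with the sign as written; in fact the statement only asks for $T_k(d)+(-1)^kz$, so this symmetry step may be unnecessary and I would double-check which sign the geometry actually produces. Finally, coprimality of the factors $T_k(d)+(-1)^kz$ for distinct $k$ (they are distinct irreducibles, being linear and monic in $z$ with distinct $z$-free parts $(-1)^kT_k(d)$) lets me multiply the individual divisibilities into the product $\prod_{k=1}^n(T_k(d)+(-1)^kz)^{\binom{2n}{n-k}}\mid D_n^{(Mb)}$.

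The main obstacle I expect is the state-counting step in the Klein bottle: unlike the annulus, the M\"obius band has only one boundary component and the crosscap interacts with arcs in a more subtle way, so I must verify carefully that inserting $f_k$ around the crosscap really does cut the number of surviving crossingless connections by exactly $\binom{2n}{n-k}$ and not by some other binomial quantity---in particular I need to confirm that the ``bad'' states (those annihilated) are in bijection with mountain-path / ballot-type objects counted by $\binom{2n}{n-k}$, analogous to Lemma~\ref{lattice path} but adapted to the crosscap. A secondary point to nail down is that the specialization $z=(-1)^{k-1}T_k(d)$ indeed forces the $f_k$-decorated crosscap loop to behave as claimed---this is where I would lean on the fact (used for type $B$) that a curve carrying $f_k$ and linking the core $k$ times evaluates via a Chebyshev identity, so that the relevant diagrammatic relation holds exactly at that value of $z$. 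Once these two combinatorial/diagrammatic facts are in hand, the rest is the routine rank-nullity and coprimality bookkeeping.
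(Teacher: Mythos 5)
A preliminary but important observation: the paper itself gives no proof of Theorem~\ref{P1}. It is stated without argument, accompanied only by the instruction to ``compare with the analogous case in \cite{Ch-P}.'' So there is no proof to match yours against; your overall strategy --- force the specialization $z=(-1)^{k-1}T_k(d)$ via a $k$-th Jones--Wenzl decoration, show the Gram matrix then has nullity at least $\binom{2n}{n-k}$, and combine the pairwise coprime factors --- is precisely the route the authors are gesturing at, and it is consistent with the computed $n=2$ determinant, which does contain $(d-z)^4(d^2-2+z)$.

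That said, what you have is a plan with three genuine gaps, only the first of which you flag accurately. (a) \emph{The count.} Your justification that the rank drops by $\binom{2n}{n-k}$ --- ``the states that pass through the crosscap at least $k$ times are in bijection with the relevant annular Catalan states'' --- cannot be right as bookkeeping: the number of connections meeting the crosscap at least $k$ times is $\sum_{j\ge k}\binom{2n}{n-j}=\sum_{i=0}^{n-k}\binom{2n}{i}$, not $\binom{2n}{n-k}$. In the type $B$ argument the decoration does not ``kill'' states; it creates $\binom{2n}{n-k}$ linear dependencies among all $\binom{2n}{n}$ of them, and establishing the analogous dependency count on the M\"obius band is exactly the content of the theorem, not a routine transcription. (b) \emph{The geometry of the decoration.} In $A^2\times I$ the $f_k$-circle is a meridian meeting the curve $z$ once, which is what produces the eigenvalue $(-1)^{k-1}T_k(d)$. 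In $Mb\ \hat\times\ I$ (also a solid torus, with core the core of $Mb$) the curve $z$ is boundary-parallel in $Mb$ and hence isotopic to the \emph{square} of the core, so any meridian disk meets it twice; naively transporting the Hopf-link trick therefore threatens to specialize $z$ to a value governed by $T_{2k}(d)=T_k(d)^2-2$ rather than $T_k(d)$. You must identify exactly which circle carries $f_k$ and verify which specialization it forces; this is where the argument could actually fail rather than merely be incomplete. (c) \emph{The other variables.} Unlike type $B$, the entries of $G_n^{Mb}$ lie in $\mathbb{Z}[d,w,x,y,z]$, so the $\binom{2n}{n-k}$ column dependencies must hold identically in $w,x,y$ after substituting for $z$ --- in particular they must be respected when pairing against states that do meet the crosscap (the $x$-, $y$-, $w$-blocks of the matrix). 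Your remark that one ``treats $d,w,x,y$ as parameters'' does not address this; compare how Proposition~\ref{p} has to handle the $(x,x,w)$ rows explicitly. Until (a)--(c) are resolved, the proposal is the correct strategy but not yet a proof.
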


\begin{theorem} \label{P2}
$$(D^{(Mb)}_{n-1})^2 \text{  divides } D^{(Mb)}_n \text{,  if } (d^2-1) \text{ is invertible.} $$ %{\color{red} should we include a sketch of the proof}
\end{theorem}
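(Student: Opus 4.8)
The plan is to produce an explicit injective "doubling" map from the basis $Mb_{n-1}$ into $Mb_n$ which, after a suitable invertible change of basis, makes the Gram matrix $G_n^{Mb}$ block-decompose with two diagonal blocks each congruent (over the localization $\mathbb{Z}[A^{\pm1}][(d^2-1)^{-1}]$) to $G_{n-1}^{Mb}$. This is the same mechanism used in \cite{Ch-P} and \cite{P-Zh} for the analogous divisibility statements $D_{n-1}^B \mid D_n^B$ and its M\"obius-band cousins, so I would set things up to mirror those arguments. First I would fix the two marked points on the boundary of the M\"obius band that are "innermost" (say the pair adjacent on the boundary circle), and to a crossingless connection $m \in Mb_{n-1}$ associate the connection $\iota(m)\in Mb_n$ obtained by adding a small crossingless arc joining these two new points and embedding $m$ in the complementary sub-band. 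This $\iota$ is injective, and an element of $Mb_n$ is in the image of $\iota$ precisely when the innermost pair is joined by a trivial turnback.

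Next I would analyze the bilinear form restricted to $\iota(Mb_{n-1})$. Gluing $\iota(m_i)$ to the inversion of $\iota(m_j)$ along the boundary, the extra turnback arcs close up into one extra simple closed curve on the Klein bottle; that curve is null-homotopic (it bounds a disk near the boundary), contributing a factor of $d$. Hence $\langle \iota(m_i),\iota(m_j)\rangle = d \cdot \langle m_i,m_j\rangle$, so the principal submatrix of $G_n^{Mb}$ indexed by $\iota(Mb_{n-1})$ equals $d\, G_{n-1}^{Mb}$. The content of the theorem is then to exhibit a \emph{second} copy: I would use the standard trick of replacing each turnback arc by its "Jones--Wenzl corrected" counterpart, i.e. the combination $m \mapsto \iota(m) - \tfrac{1}{d}(\text{cap-off of } m)$, or more precisely the element obtained by inserting $f_2$ (equivalently $\mathbb{1} - \tfrac1d e_1$) on the innermost two strands. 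These corrected elements span a complement to $\iota(Mb_{n-1})$ inside their span of the relevant states, the change of basis is unitriangular (hence determinant $1$), and the off-diagonal block between the two families vanishes because $f_2 e_1 = 0$ kills any gluing that produces a turnback. The surviving diagonal block is, up to the scalar coming from evaluating the $f_2$-loop, again a copy of $G_{n-1}^{Mb}$ — and it is exactly in computing this scalar that the hypothesis enters: the closed $f_2$-curve on the Klein bottle evaluates to something proportional to $d^2-1$ (a Chebyshev value $S_2$ of the relevant curve variable minus a correction), which must be inverted to normalize the block back to $G_{n-1}^{Mb}$ rather than a scalar multiple of it.

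Putting the pieces together: after the unitriangular change of basis, $\det G_n^{Mb}$ equals $\det$ of a block-triangular matrix whose two diagonal blocks are $d\,G_{n-1}^{Mb}$ and $\big((\text{unit}) \cdot G_{n-1}^{Mb}\big)$ (the unit being a power of an invertible element built from $d^2-1$), times the determinant of the remaining block indexed by states not in either family; the latter contributes a polynomial factor and does not affect divisibility. Therefore $(D_{n-1}^{Mb})^2$ divides $D_n^{Mb}$ in $\mathbb{Z}[A^{\pm1}][(d^2-1)^{-1}]$, and since $D_n^{Mb}$ and $D_{n-1}^{Mb}$ both lie in $\mathbb{Z}[A^{\pm1}]$ and $(d^2-1)$ is not a zero divisor there, the divisibility holds over $\mathbb{Z}[A^{\pm1}]$ as well, which is the statement.

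The main obstacle I anticipate is the bookkeeping for the off-diagonal block and, especially, pinning down the exact scalar produced when the $f_2$-decorated innermost strands are closed up across the Klein-bottle gluing: unlike the disc or annulus case, the loop can be homotopic to any of the five distinguished curves depending on the ambient connection $m$, so one must check that in every case the normalization factor is a unit in $\mathbb{Z}[A^{\pm1}][(d^2-1)^{-1}]$ and that the resulting matrix really is $G_{n-1}^{Mb}$ up to that single global unit rather than a curve-dependent rescaling of individual rows. Handling this uniformly — probably by arguing at the level of the skein module of $Mb\ \hat\times\ I$ before gluing, where the $f_2$-insertion acts as an honest idempotent — is where the real work lies; the rest is the formal "two blocks plus junk" determinant computation already standard in \cite{Ch-P, P-Zh}.
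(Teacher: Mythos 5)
The paper states Theorem \ref{P2} without proof, only pointing to the analogous divisibility in \cite{P-Zh}, so your proposal has to be measured against that standard argument. Your overall shape --- exhibit two diagonal blocks, each a unit multiple of $G_{n-1}^{Mb}$, with the unit accounting for the hypothesis on $d^2-1$ --- is the right one, and the computation $\langle\iota(m_i),\iota(m_j)\rangle=d\,\langle m_i,m_j\rangle$ for the turnback family is correct. The gap is that the second copy of $Mb_{n-1}$ is never actually constructed, and that is where the theorem lives. Inserting $f_2$ on the two innermost strands of $\iota(m)$ gives zero, because those two strands form a turnback and $f_2e_1=0$; and ``$\iota(m)-\tfrac1d(\text{cap-off of }m)$'' does not typecheck, since capping off $m\in Mb_{n-1}$ produces a diagram on $2n-4$ points rather than $2n$. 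What the decomposition $\mathbb{1}=\tfrac1d e_1+f_2$ really gives you is an orthogonal complement to $\iota(Mb_{n-1})$ indexed by the $|Mb_n|-|Mb_{n-1}|$ states \emph{without} the turnback; that yields one factor of $D^{(Mb)}_{n-1}$, not its square. To get the second factor you need a second honest embedding of $Mb_{n-1}$ into $Mb_n$, e.g.\ the cap placed at the adjacent, overlapping position (as in \cite{Ch-P,P-Zh}): the self-pairings of the two cap families each give $d\,G_{n-1}^{Mb}$ while the cross-pairing gives $G_{n-1}^{Mb}$ up to an order-preserving relabelling and no extra curve, so the combined block is $\bigl(\begin{smallmatrix} d & 1 \\ 1 & d \end{smallmatrix}\bigr)\otimes G_{n-1}^{Mb}$, whose determinant contributes $(d^2-1)^{|Mb_{n-1}|}(D^{(Mb)}_{n-1})^2$. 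That is the actual source of the hypothesis; attributing $d^2-1$ to the value $\Delta_2$ of a closed $f_2$-loop is a coincidence of values, not a derivation, and you yourself note that on the Klein bottle the relevant loop need not even be trivial.

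Two further points. First, even with a correctly defined second family, the assertion that its self-pairing block is a \emph{single global} unit times $G_{n-1}^{Mb}$ is precisely the step you defer: on the M\"obius band the closed curve created by gluing caps can be trivial, core-parallel, or boundary-parallel depending on the state, so one must check that the caps can be kept in a collar of the boundary, disjoint from both crosscaps, to force the uniform factor $d$. Second, the closing descent from $\mathbb{Z}[A^{\pm1}][(d^2-1)^{-1}]$ back to $\mathbb{Z}[A^{\pm1}]$ is a non sequitur --- divisibility in a localization does not descend (for instance $d^2-1$ divides $d^2$ after inverting $d^2-1$ but not before) --- though it is also unnecessary, since the theorem only asserts divisibility under the stated invertibility hypothesis.
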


\section*{Acknowledgments}

The third author was partially supported by the CCAS Dean's Dissertation Completion Fellowship. The fourth author was partially supported by the Simons Foundation Collaboration Grant for Mathematicians - 316446 and the CCAS Dean’s Research Chair award.

\end{document}